\newtheorem{thm}{Theorem}[section]
\newtheorem{cor}[thm]{Corollary}
\newtheorem{lem}[thm]{Lemma}
\newtheorem{prop}[thm]{Proposition}
\theoremstyle{definition}
\newtheorem{conj}[thm]{Conjecture}
\theoremstyle{remark}
\newtheorem*{rem}{Remark}
\numberwithin{equation}{section}
\numberwithin{figure}{section}
\newcommand{\lr}{\longrightarrow}
\newcommand{\calW}{{\mathscr W}}
\newcommand{\bfB}{{\mathbf B}}
\newcommand{\bfK}{{\mathbf K}}
\newcommand{\bfP}{{\mathbf P}}
\newcommand{\bfE}{{\mathbf E}}
\newcommand{\trace}{\operatorname{trace}}
\newcommand{\erfc}{\operatorname{erfc}}
\newcommand{\R}{{\mathbb R}}
\newcommand{\bfR}{{\mathbf R}}
\newcommand{\C}{{\mathbb C}}
\newcommand{\magn}{{\Gamma}}
\newcommand{\eps}{{\varepsilon}}
\newcommand{\re}{\operatorname{Re}}
\newcommand{\im}{\operatorname{Im}}
\newcommand{\Int}{\operatorname{Int}}
\newcommand{\perim}{\operatorname{perim}}
\renewcommand{\P}{{\mathbf{P}}}
\renewcommand{\d}{{\partial}}
\newcommand{\dbar}{\overline{\partial}}
\newcommand{\1}{{\mathbf 1}}
\newcommand{\lspan}{\operatorname{span}}
\newcommand{\dist}{\operatorname{dist}}
\newcommand{\supp}{\operatorname{supp}}
\newcommand{\seq}{a}
\newcommand{\newM}{A}
\newcommand{\newA}{A}
\newcommand{\bfp}{\boldsymbol{p}}
\renewcommand{\L}{\mathbb{L}}
\newcommand{\vt}{\vartheta}
\newcommand{\family}{\boldsymbol{\zeta}}
\newcommand{\conf}{\{\zetaj\}_1^n}
\newcommand{\sepn}{\mathrm{s}}
\newcommand{\sep}{\mathrm{s}}
\newcommand{\dens}{D_{BL}}
\newcommand{\fbf}{F}
\newcommand{\zetak}{\zeta_k}
\newcommand{\zetaj}{\zeta_j}
\newcommand*\bigcdot{\mathpalette\bigcdot@{.5}}
\newcommand*\bigcdot@[2]{\mathbin{\vcenter{\hbox{\scalebox{#2}{$\m@th#1\bullet$}}}}}
\begin{document}

\title[The low temperature Coulomb gas]{The planar low temperature Coulomb gas: separation and equidistribution}

\subjclass[2010]{60K35, 82B26, 94A20, 31C20}

\keywords{Planar Coulomb gas; external potential; low temperature; freezing; separation; equidistribution; Fekete configuration}

\author{Yacin Ameur}
\address{Yacin Ameur\\
Department of Mathematics\\
Lund University\\
22100 Lund, Sweden\\
and Erwin Schr\"odinger International Institute for Mathematics and Physics, University of Vienna,
Boltzmanngasse 9A, A-1090 Vienna, Austria.}
\email{ Yacin.Ameur@math.lu.se}

\author{Jos\'e Luis Romero}
\address{Jos\'e Luis Romero\\Faculty of Mathematics \\
University of Vienna \\
Oskar-Morgenstern-Platz 1 \\
A-1090 Vienna, Austria\\ and
Acoustics Research Institute\\ Austrian Academy of
Sciences\\Wohllebengasse 12-14, A-1040 Vienna, Austria\\ and Erwin Schr\"odinger International Institute for Mathematics and Physics, University of Vienna,
Boltzmanngasse 9A, A-1090 Vienna, Austria.}
\email{jose.luis.romero@univie.ac.at}

\begin{abstract}

We consider planar Coulomb systems consisting of a large number $n$ of repelling point charges in the low temperature regime, where the inverse temperature $\beta$ grows at least logarithmically in $n$
as $n\lr\infty$, i.e., $\beta\gtrsim \log n$.

Under suitable conditions on an external potential
we prove results to the effect that the gas is with high probability uniformly separated and equidistributed with respect to the corresponding equilibrium measure (in the
given external field).

Our results generalize earlier results about Fekete configurations, i.e., the case $\beta=\infty$.
There are also several auxiliary results which could be of independent interest. For example, our method of proof of equidistribution (a variant of ``Landau's method'')
 works for general families of configurations which are uniformly separated and which satisfy
certain sampling and interpolation inequalities.
\end{abstract}

\thanks{Y. A. and J. L. R. gratefully acknowledge support from the Austrian Science Fund (FWF): Y 1199 and from the Research in Teams programme of the Erwin Schr\"odinger International Institute for Mathematics and Physics of University of Vienna (``Time-frequency analysis of random point processes'').}

\maketitle

\section{Introduction}
\subsection{Main results}
Let us briefly recall the setting of the planar Coulomb gas with respect to an external potential $Q$ in the plane and an inverse temperature $\beta=1/(k_BT)>0$.

The potential $Q$ is a fixed function from the complex plane $\C$ to $\R\cup\{+\infty\}$.
It is always assumed that $Q$ is lower semicontinuous, is finite on some set of positive capacity, and obeys the growth condition
\begin{align}\label{eq_qlarge}
\liminf_{\zeta \lr \infty} \frac{Q(\zeta)}{2 \log |\zeta|} > 1.
\end{align}

To a plane
configuration $\{\zetaj\}_1^n\subset\C$ we then associate the Hamiltonian (or total energy)
$$H_n=\sum_{j\ne k}\log\frac 1 {|\zetaj-\zetak|}+n\sum_{j=1}^n Q(\zetaj),$$
and form the Boltzmann-Gibbs measure on $\C^n$
\begin{equation}\label{bg}d\P_n^{\,\beta}=\frac 1 {Z_n^{\,\beta}}e^{\,-\beta H_n}\, dA_n.\end{equation}
(The constant $Z_n^{\,\beta}$ is chosen so that $\P_n^{\,\beta}$ is a probability measure.)

Here and throughout we use the convention that ``$dA$'' denotes the Lebesgue measure in $\C$ divided by $\pi$, i.e., $dA=\tfrac 1 \pi \,dx\, dy$.
We write $dA_n$ for the product measure on $\C^n$: $dA_n=(dA)^{\otimes n}.$

A configuration $\conf$ which renders $H_n$ minimal is known as a Fekete configuration. In a sense Fekete configurations correspond to the inverse temperature $\beta=\infty$.

In the paper \cite{A2}, a related \emph{low temperature regime} was studied, when the inverse temperature increases at least logarithmically with the number $n$ of particles, i.e.,
\begin{equation}\label{freezing}\beta=\beta_n\ge c\log n,\end{equation}
where $c$ is an arbitrary but fixed, strictly positive number.

In the present work, we shall find further support for the picture
that \eqref{freezing} gives a natural ``freezing regime'' as the parameter $c$ increases from $0$ to $\infty$, in the sense that the system becomes more and more ``lattice-like'' in this transition.

We now recall some results from classical potential theory that can be found in \cite{HM} and \cite{ST}, for example.
For a given compactly supported Borel probability measure $\mu$ on $\C$, we define its logarithmic $Q$-energy by
$$I_Q[\mu]=\int_{\C^2}\log\frac 1 {|\zeta-\eta|}\, {d\mu(\zeta)\, d\mu(\eta)}+\mu(Q),$$
where $\mu(Q)$ is short for $\int Q\, d\mu$.

Under the above hypotheses there is a unique probability measure
$\sigma=\sigma[Q]$ which minimizes $I_Q$ over all compactly supported Borel probability measures, see \cite{ST}. This measure $\sigma$ is known as the \textit{equilibrium measure} in external potential $Q$,
and its support $S=\supp\sigma$ is called the
\emph{droplet}.

We will assume throughout that $Q$ is $C^2$-smooth in a neighborhood of $S$. This implies (by Frostman's theorem) that $\sigma$
is absolutely continuous and takes the form
$$d\sigma=\1_S\cdot\Delta Q\, dA,$$
where
$\Delta:=\d\dbar=\frac 1 4(\d_{xx}+\d_{yy})$
is one-quarter of the standard Laplacian. In particular, $\Delta Q\ge 0$ on the droplet $S$.

We remark that the system $\{\zeta_j\}_1^n$ tends to follow the equilibrium measure in the following sense. Let $\bfR_n^{\beta_n}(\zeta)$ be the usual 1-point
intensity function, i.e.,
\begin{equation}\label{onept}\bfR_n^{\,\beta_n}(\zeta)=\lim_{\eps\lr 0}\frac {\bfE_n^{\,\beta_n}(\#D(\zeta,\eps))}{\eps^{\,2}}.\end{equation}
Here and throughout we use the following terminology: if $B$ is a Borel subset of $\C$, then
$$\# B:=\#(B\cap\{\zeta_j\}_1^n)$$
denotes the number of particles $\{\zeta_j\}_1^n$ that fall in $B$.
Thus $\# B$ is an integer-valued random variable and $\bfR_n^{\,\beta_n}(\zeta)$ has the meaning of the expected number of particles per unit area at $\zeta$. Of course,
$D(\zeta,\eps)$ denotes the open disc with center $\zeta$ and radius $\eps$.

Recall  that $\tfrac 1 n\bfR_n^{\beta}\, dA\lr \sigma$ as $n\to\infty$ in the weak sense of measures, by the well-known Johansson equilibrium convergence theorem, \cite{J,HM}.
As noted in \cite[Theorem A.1]{A}, the proof of \eqref{joh00} for fixed $\beta$ in \cite{HM,J} works in the present situation if we assume (for example) a uniform lower bound
$\beta_n\ge\beta_0>0$, and if the entropy $\sigma(\log\Delta Q)$ is finite, i.e., we have
\begin{equation}\label{joh00}\tfrac 1 n\, \bfR_n^{\beta_n}\, dA\lr \sigma,\qquad (n\lr\infty)\end{equation}
in the weak sense of measures.

In what follows, it is convenient to impose the following (mild) assumptions on $Q$.
\begin{enumerate}[label=(\arabic*)]
\item \label{Q1} $\Delta Q>0$ in a neighborhood of the boundary $\d S$.
\item \label{Q2} The boundary $\d S$ has finitely many connected components.
\item \label{Q3} Each boundary component is an everywhere $C^1$-smooth Jordan curve.
\item \label{Q4} $S^*=S$ where $S^*$ is the coincidence set for the
obstacle problem associated with $Q$. (Concretely, this means that for $\zeta \in \mathbb{C} \setminus S$:
\begin{align*}
Q(\zeta) > \sup \big\{f(\zeta): f \in \mathcal{F}_Q \big\},
\end{align*}
where $\mathcal{F}_Q$ denotes the class of subharmonic functions on $\mathbb{C}$ that are everywhere $\leq Q$ and satisfy $f(\zeta) \leq \log|\zeta|^2 + O(1)$ as $|\zeta| \to \infty$.)
\end{enumerate}

See e.g.~\cite{ST} or \cite[Section 2]{A} for details about the obstacle problem associated with $Q$. It should be emphasized that some of the previous conditions are assumed merely for convenience. For example, condition \ref{Q4} could be avoided by
redefining the potential $Q$ to be $+\infty$
outside a small enough neighbourhood of the droplet. Also condition \ref{Q3} could be relaxed at the expense of some slight elaborations, but in the end those details have not seemed
interesting enough to merit inclusion in our present work.

Our goal is to study asymptotic properties of random samples $\{\zeta_j\}_1^n$
as $n \lr \infty$, in the low temperature regime \eqref{freezing}. The properties we have in mind
are conveniently expressed in terms of \emph{families} of configurations,
$$\family=(\family_n)_{n=1}^\infty,$$
where the configuration $\family_n=\{\zeta_{nj}\}_{j=1}^n$ is the $n$:th sample in the family. To lighten the notation, we usually write the $n$:th sample as $\{\zeta_j\}_1^n$ rather than
$\{\zeta_{nj}\}_1^n$.

We shall consider such families as picked randomly with respect to the product measure on $\prod_{n=1}^\infty \C^n$
\begin{equation}\label{eq_compat}
\P=\prod_{n=1}^\infty \bfP_n^{\,\beta_n},
\end{equation}
which we will likewise call a Boltzmann-Gibbs measure.

Given a plane configuration $\family_n=\{\zeta_j\}_1^n$, we define its (global, scaled)
\textit{spacing} by
\begin{equation}\label{separation}\sep_n(\family_n):=\sqrt{n}\cdot \min\big\{\,|\zetaj-\zetak|\,:\, j\ne k\,\big\}.\end{equation}
If $\sep_n(\family_n)\ge s_0>0$ we say that the configuration is \emph{$s_0$-separated}.
Similarly, a family $\family$  is said to be (asymptotically) \textit{$s_0$-separated} if
$$\liminf_{n \lr \infty} \sep_n(\family_n) \geq s_0.$$

The following theorem improves on a local separation result
from \cite{A2}, and also generalizes a global result for Fekete configurations in \cite{AOC}.

\begin{thm} \label{mth} (\emph{``Uniform separation''}) Let $Q$ be a $C^2$-smooth potential in a neighbourhood of the droplet satisfying \ref{Q1}-\ref{Q4}.
Also suppose that there is a constant $c>0$ such that
\begin{equation}\label{lt}\beta_n\ge c\log n.\end{equation}
Then there exists a constant $s_0=s_0(c)>0$ such that almost every family is $s_0$-separated, i.e.,
\begin{equation}\label{eq_s0}
\liminf_{n \lr \infty} \sep_n(\family_n)  \geq s_0,
\mbox{ almost surely.}
\end{equation}
\end{thm}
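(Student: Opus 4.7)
I would reduce the theorem to a sufficiently strong upper bound on the two-point intensity and then conclude by a union bound and Borel--Cantelli. Concretely, my first move is to restrict attention to configurations contained in a fixed neighbourhood $S_\delta$ of the droplet: known confinement estimates in the low-temperature regime (cf.~\cite{A2}) show that for $\beta_n\ge c\log n$ the probability that any $\zetaj$ lies outside $S_\delta$ is super-polynomially small, so this event is harmless for the a.s.\ statement. Next, by exchangeability,
\[
\P_n^{\beta_n}\lpar \sep_n(\family_n)<s\rpar \le \binom{n}{2}\,\P_n^{\beta_n}\lpar |\zetao-\zeta_2|<s/\sqrt{n}\rpar.
\]

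The heart of the argument is then the following two-point estimate: uniformly for $z,w$ in a neighbourhood of $S$ with $|z-w|$ small,
\[
\bfR_{n,2}^{\,\beta_n}(z,w)\le C\,n^{2+\beta_n}\,|z-w|^{2\beta_n}.
\]
To prove this, I would factor $e^{-\beta_n H_n}=|\zetao-\zeta_2|^{2\beta_n}\,e^{-\beta_n \tilde H_n}$, where $\tilde H_n$ has the $(1,2)$-interaction removed, and integrate out $\zeta_3,\dots,\zeta_n$. This reduces the task to controlling the ratio $Z_{n-2}^{\beta_n}(z,w)/Z_n^{\beta_n}$, where $Z_{n-2}^{\beta_n}(z,w)$ denotes the partition function of an $(n-2)$-particle system with effective external potential $Q+\tfrac{2}{n}(\log\tfrac{1}{|\cdot-z|}+\log\tfrac{1}{|\cdot-w|})$. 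A comparison with the configuration in which the pair $(z,w)$ is moved to a generic pair at distance $\asymp 1/\sqrt n$ yields the scale factor $n^{\beta_n}$; the remaining $n^2$ is the usual normalization of the intensity. Here assumptions \ref{Q1}--\ref{Q3} and the $C^2$-smoothness of $Q$ provide the uniformity over $z,w$ needed for the comparison.

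Integrating the two-point bound gives
\[
\int_{|z-w|<s/\sqrt n}\bfR_{n,2}^{\,\beta_n}(z,w)\, dA(z)\, dA(w)\le C\,n^{2+\beta_n}\cdot \frac{|S_\delta|(s/\sqrt n)^{2\beta_n+2}}{\beta_n+1}\le C'\, \frac{n\, s^{2\beta_n+2}}{\beta_n+1}.
\]
Using $\beta_n\ge c\log n$ and $s<1$, this is bounded by $C''\,s^2\, n^{1-2c|\log s|}/\log n$, which for $|\log s|>1/c$, i.e.\ $s<e^{-1/c}$, is summable in $n$. Setting $s_0:=e^{-1/c}$ (or anything slightly smaller) and applying the Borel--Cantelli lemma to the product measure $\P=\prod_n \bfP_n^{\beta_n}$ yields the desired $\liminf_n \sep_n(\family_n)\ge s_0$ almost surely.

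\textbf{Main obstacle.} The substantive step is the uniform two-point bound with the exact exponent $n^{2+\beta_n}$. The repulsion factor $|z-w|^{2\beta_n}$ is obvious from the Boltzmann weight, but matching it with the correct prefactor requires a quantitative comparison of partition functions after ``displacing'' one of the two close particles to a typical separation scale. In the Fekete case ($\beta=\infty$) this comparison degenerates to the deterministic minimizing property (as in \cite{AOC}); at positive temperature one must replace that minimizing argument by probabilistic estimates for the Gibbs measure in the modified external field, and show that these estimates are uniform in $z,w$ on scales down to $1/\sqrt n$. This is where the smoothness and non-degeneracy hypotheses \ref{Q1}--\ref{Q3}, together with a local analysis near the droplet and its boundary, must do the heavy lifting.
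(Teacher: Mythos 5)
Your reduction — confinement to a neighbourhood of $S$, exchangeability plus a union bound, a two-point intensity estimate of the form $\bfR_{n,2}^{\,\beta_n}(z,w)\le C\,n^{2+\beta_n}|z-w|^{2\beta_n}$, integration over $\{|z-w|<s/\sqrt n\}$, and Borel--Cantelli — is sound as a skeleton, and the summability computation and the resulting constant $s_0\approx e^{-1/c}$ are of the right order (even a loss $K^{\beta_n}$ with $K$ independent of $n$ would only shrink $s_0$ by a constant factor). The claimed two-point bound is also dimensionally correct and consistent with the determinantal case. The problem is that this bound \emph{is} the theorem: everything you have written around it is routine, and the argument you sketch for it does not go through as stated.

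Concretely, after factoring out $|z-w|^{2\beta_n}$ you must show $e^{-\beta_n n(Q(z)+Q(w))}Z_{n-2}^{\beta_n}(z,w)\le C\,n^{\beta_n}Z_n^{\beta_n}$, and your proposed comparison ``move the pair $(z,w)$ to a generic pair at distance $\asymp 1/\sqrt n$'' founders on the external field: displacing a particle by $\asymp 1/\sqrt n$ changes $nQ$ by $n\,|\nabla Q|\,|\zeta_1-z|\asymp\sqrt n$ wherever $\nabla Q\neq 0$, and this enters the exponent multiplied by $\beta_n$, which is only bounded below (it may grow like $n$ or faster). A naive displacement therefore loses factors like $e^{-C\beta_n\sqrt n}$, which no choice of $s_0$ can absorb. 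The loss is in reality cancelled by the simultaneous change of the interaction energy with the remaining $n-2$ particles (screening: the effective one-particle field $nQ+2\sum_{j\ge 3}\log|\cdot-\zeta_j|^{-1}$ is flat on the droplet for \emph{typical} configurations), but quantifying that cancellation uniformly down to microscopic scales and, crucially, up to the boundary of $S$ is a local-law type statement of essentially the same depth as the result you are trying to prove; your appeal to \ref{Q1}--\ref{Q3} and $C^2$-smoothness does not supply it. This is precisely the difficulty the paper sidesteps: the identity $|\ell_j(\zeta)|^{2\beta}e^{-\beta H_n(\ldots,\zeta_j,\ldots)}=e^{-\beta H_n(\ldots,\zeta,\ldots)}$ of Lemma \ref{exid} is an \emph{exact} displacement formula in which the field change and the interaction change are packaged together into the weighted Lagrange polynomial $\ell_j$, whose expected local averages are then computed exactly (no screening estimate needed); separation then follows from the Bernstein-type gradient bound of Lemma \ref{grdlem} applied to $\ell_j$ and integration along the segment joining the closest pair, with Chebyshev, a union bound and Borel--Cantelli exactly as in your closing step. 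To complete your route you would either have to import a genuine local law valid uniformly near $\d S$ (which is nontrivial and not available off the shelf in this boundary-uniform form), or replace the partition-function comparison by an exact device of the Lagrange-polynomial type — at which point you have essentially rejoined the paper's proof.
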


\begin{rem}
Our proof shows that \eqref{eq_s0} holds with (for example) $s_0=me^{-\frac 3 {2c}}$ where $m>0$ is a constant (depending only on $Q$).
\end{rem}

\begin{rem} In contrast to Theorem \ref{mth}, the separation result in \cite{A2} is local, valid near any point (bulk or boundary).
In the local setting, we may obtain
stronger bounds for the separation constant
depending on the strength of the Laplacian at the given point. (In particular, a substantial improvement is possible near a special point at which $\Delta Q=0$.)
Like in \cite{A2}, we may view Theorem \ref{mth} as a special case of a separation result valid for all $\beta$, not just for the low temperature regime; see a remark by the end
of Section \ref{sec_2}.
A local separation theorem for the bulk appeared also in the subsequent article \cite{ARSE}
(see part (4) of \cite[Theorem 1]{ARSE}),
depending on very different methods.

We shall find that Theorem \ref{mth} follows in a succinct way by
combining and developing ideas found in the recent works \cite{A,A2}.
\end{rem}

\begin{rem} By the Borel-Cantelli lemmas (see \cite{B}), our notion of almost sure convergence in \eqref{eq_s0} (with respect to $\P=\prod\bfP_n^{\,\beta_n}$)
is equivalent with that
\begin{equation}\label{our}\sum_{n=n_0}^\infty \bfP^{\,\beta_n}_n \left(\left\{\,\sep_n(\family_n) < s_0\, \right\}\right)\lr 0,\qquad (n_0\lr\infty).\end{equation}
This differs slightly from several related notions of convergence defined in Tao's book \cite[page 6]{T}.
For example, Tao would say that ``the event $\{\,\sep_n(\family_n)\ge s_0\,\}$ holds asymptotically almost surely as $n\to\infty$'' if
the convergence
$\lim_{n\to\infty}\bfP^{\,\beta_n}_n (\{\,\sep_n(\family_n) < s_0\, \})= 0$
holds. Likewise, Tao's notion of
``convergence with high probability'' is closely related to, but not quite the same, as \eqref{our}.
\end{rem}

We shall now address \textit{equidistribution} of random families in the low temperature regime.
For this purpose, it is convenient to impose stronger conditions on our potentials $Q$: we require in addition to our earlier assumptions that
\begin{enumerate}[label=(\arabic*)]
\setcounter{enumi}{4}
\item \label{Q5} $Q$ is real-analytic in a neighbourhood of $S$,
\item \label{Q6} $\Delta Q>0$ in a neighbourhood of $S$,
\item \label{Q7} $S$ is connected.
\end{enumerate}

\begin{rem}
An important consequence of condition \ref{Q5} is that it implies that the boundary $\d S$ is regular. Indeed, the well-known ``Sakai regularity theorem'' implies that under \ref{Q5} and \ref{Q6},
the boundary $\d S$ consists of finitely many analytic Jordan curves, possibly having finitely many singular points (cusps or double points) of known types. Such singular points
are precluded by condition \ref{Q3}. We shall freely
apply this result in the sequel. We refer to \cite[Section 6.3]{A0} as well as \cite{AKMW,LM} for details about the application of Sakai's theorem in the present setting. Sakai's original result,
which was formulated in a somewhat different way, is shown in \cite{GP,Sa1}, for example. Finally, it should be noted that the class of potentials meeting all requirements
\ref{Q1}-\ref{Q7} is very rich (one can begin with any element of a vast class of real-analytic functions and redefine it to be $+\infty$ near infinity \cite{EF,LM}). The paper \cite{LM} and the references there
contain many interesting examples, see also \cite{AKMW,BHa,BK,Sk,Te,Z}, for example.
\end{rem}

We next recall the notion of Beurling-Landau density of a family $\family=(\family_n)_n$ at a point $p$ in the plane (the ``zooming point''). It is advantageous to allow the zooming-point
to vary with $n$, i.e., $p=p_n$. We then look at the number of particles per unit area that fall in a microscopic disc about $p_n$ of radius $L/\sqrt{n}$, where $L>2$ is a (large) parameter.

We now come to the precise definition. Write $\bfp=(p_n)_1^\infty$ where $p_n$ are any points in the plane.
We
define the \emph{Beurling-Landau density} $\dens(\family,\bfp)$ of $\family$ at $\bfp$ by
\begin{align}\label{eq_BD}
\dens(\family,\bfp)=\lim_{L\lr\infty}\limsup_{n\lr\infty}\frac {\# D(p_n,L/\sqrt{n})}{L^{\,2}}=\lim_{L\lr\infty}\liminf_{n\lr\infty}\frac {\# D(p_n,L/\sqrt{n})}{L^{\,2}}
\end{align}
provided that the limits exist, and that the two expressions are indeed equal. (In general,
the two expressions in \eqref{eq_BD} are called
upper and lower densities.)

To express our next result, it is convenient to restrict attention to zooming points $p_n$ which converge to some limit $p_*\in \C$, i.e., $$p_n\lr p_*,\qquad  (n\lr\infty).$$

Following \cite{AOC} we say that $(p_n)_1^\infty$ belongs to the
\begin{itemize}
\item \textit{bulk regime} if $p_n\in \Int S$ for all $n$ and
$\sqrt{n}\,\dist (p_n,\C\setminus S)\lr\infty$ as $n\lr\infty$,
\item \textit{boundary regime} if $\limsup_{n\lr\infty}\sqrt{n}\,\dist (p_n,\d S)<\infty$,
 \item \textit{exterior regime} if $p_n\in \C\setminus S$ for all $n$ and
$\sqrt{n}\, \dist (p_n,S)\lr\infty$ as $n\lr\infty$.
\end{itemize}

As in \cite{A0} we could also include regimes near singular boundary points, but for reasons of length we shall here ignore this possibility (cf. assumption \ref{Q3}).

We can now state our second main result, which generalizes the equidistribution theorems for $\beta=\infty$ obtained in
\cite{AOC,A0}.

\begin{thm} \label{mth2} (\emph{``Equidistribution''}) Assume that $Q$ satisfies conditions \ref{Q1}-\ref{Q7} and that $\beta_n$ is in the low-temperature regime $\beta_n\ge c\log n$.
Then for almost every random family $\family$ from the corresponding Boltzmann-Gibbs distribution the following holds for every zooming point $\bfp=(p_n)$:
\begin{enumerate}[label=(\roman*)]
\item \label{pa1} If $\bfp$ belongs to the bulk regime then
$$\dens(\family,\bfp)=\Delta Q(p_*).$$
\item \label{pa3} If $\bfp$ belongs to the boundary regime then $$\dens(\family,\bfp)=\tfrac 1 2 \Delta Q(p_*).$$
\item \label{pa2} If $\bfp$ belongs to the exterior regime then
$$\dens(\family,\bfp)=0.$$
\end{enumerate}
Moreover, in each case, the convergence in $L$ towards the limit that defines the Beurling-Landau density \eqref{eq_BD} is uniform among all zooming sequences in the respective regimes, and among (almost) all families $\family$. (Cf. Proposition \ref{propop} for a more precise statement.)
\end{thm}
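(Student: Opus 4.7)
\textbf{Proof plan for Theorem \ref{mth2}.}

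The plan is to combine Theorem \ref{mth} with a variant of Landau's method. First I invoke Theorem \ref{mth} to pass to the event (of full probability) on which the family $\family$ is uniformly $s_0$-separated. This reduces the problem to a \emph{deterministic} density statement about $s_0$-separated configurations that have an additional quantitative regularity coming from the Boltzmann--Gibbs weight in the regime $\beta_n \ge c \log n$. The aim is to run Landau's argument in the natural weighted polynomial spaces associated with the external field $Q$, namely the spaces $\calW_n$ of holomorphic polynomials of degree $< n$ equipped with the norm $\|p\|_n^2 = \int_\C |p(\zeta)|^2 e^{-2nQ(\zeta)} \, dA(\zeta)$. The input from \eqref{joh00} identifies the target density: in the bulk, the reproducing kernel of $\calW_n$ satisfies $\frac{1}{n} \sk_n(\zeta,\zeta) e^{-2nQ(\zeta)} \to \Delta Q(\zeta)$, which is halved at boundary points and vanishes in the exterior.

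The heart of the matter is to show that, with probability tending to one as $n \to \infty$, a low-temperature random configuration $\family_n$ is simultaneously a Marcinkiewicz--Zygmund sampling set and an interpolation set for $\calW_n$ (in an appropriate local/microscopic sense relative to each zooming sequence $\bfp$). Separation is already in hand from Theorem \ref{mth}, so the interpolation (upper-bound) inequality follows along classical lines via Bergman-kernel off-diagonal decay estimates and standard Plancherel--P\'olya-type arguments on the microscopic scale, exploiting conditions \ref{Q5}--\ref{Q7}. The sampling (lower-bound) inequality is where the low-temperature assumption enters crucially: one controls the probability that $\sum_j |p(\zeta_j)|^2 e^{-2nQ(\zeta_j)} / n$ falls below a fixed fraction of $\|p\|_n^2$ by comparing $e^{-\beta_n H_n}$ to the Fekete minimum and converting the energy excess into a lower bound for the Vandermonde-type weight; the hypothesis $\beta_n \ge c \log n$ makes the corresponding bad events summable, so that Borel--Cantelli yields almost sure sampling for the entire family.

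Once sampling and interpolation are established, Landau's method produces the density. Fix a zooming sequence $\bfp$ and a radius $L$, and restrict attention to the microscopic disc $D(p_n, L/\sqrt{n})$. The sampling inequality, applied to reproducing kernels of $\calW_n$ concentrated near $p_n$, gives the lower bound
\begin{equation*}
\liminf_{n\lr\infty} \frac{\# D(p_n,L/\sqrt{n})}{L^2} \ge \nu(p_*) - o_L(1),
\end{equation*}
where $\nu(p_*)$ equals $\Delta Q(p_*)$, $\tfrac12 \Delta Q(p_*)$, or $0$ in the three regimes according to the pointwise asymptotics of $\sk_n$. The interpolation inequality, applied to weighted polynomials that vanish at all $\zeta_j$ except one and localize near $p_n$, yields the matching upper bound. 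Taking $L \to \infty$ gives \ref{pa1}--\ref{pa2} and the uniformity assertion, since the constants in the sampling/interpolation inequalities depend only on $s_0$, $c$, and $Q$, not on the individual zooming sequence.

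The main obstacle I anticipate is the sampling inequality in the low-temperature (not Fekete) regime: one needs a quantitative \emph{uniform} control of $\sum_j |p(\zeta_j)|^2 e^{-2nQ(\zeta_j)}$ from below over the entire polynomial space $\calW_n$ with probability close to one, which is genuinely stronger than the pointwise intensity bound \eqref{joh00}. The delicate step is to leverage the logarithmic growth of $\beta_n$ to beat the combinatorial factor $\binom{n}{2}$ arising from pairwise interactions, so that the Boltzmann weight pushes the configuration close enough to a Fekete array for the known Fekete sampling estimates of \cite{AOC,A0} to transfer to the random setting.
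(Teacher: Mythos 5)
Your overall skeleton (separation from Theorem \ref{mth}, then simultaneous sampling and interpolation inequalities, then Landau's method with a bandwidth/trace comparison) is indeed the paper's strategy, and your treatment of the exterior regime and of the bulk/boundary target densities is consistent with it. However, the central step of your plan --- how the low-temperature hypothesis actually produces the sampling (and interpolation) inequalities --- rests on an argument that would not work as described. You propose to compare $e^{-\beta_n H_n}$ with the Fekete minimum and to conclude that the Boltzmann weight ``pushes the configuration close enough to a Fekete array'' for the Fekete sampling estimates of \cite{AOC,A0} to transfer. In the regime $\beta_n\asymp\log n$ the typical energy excess over the Fekete minimum is of macroscopic order (roughly $n/c$), so the configuration is not close to an energy minimizer in any sense strong enough to transfer Marcinkiewicz--Zygmund inequalities; no quantitative stability statement of the kind you would need is available, and ``beating the factor $\binom n2$'' is not a substitute for one. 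The paper's route is different: the exact identity for weighted Lagrange polynomials (Lemma \ref{exid}) gives, via Chebyshev and a union bound, that with high probability $\max_j\|\ell_j\|_\infty\le A(c)$ --- this is precisely where $\beta_n\ge c\log n$ enters, through $n^{2/\beta_n}\le e^{2/c}$ --- and the sampling and interpolation operators are then built by localizing the $\ell_j$ with squared normalized $\beta=1$ reproducing kernels $\bfK_{n\eps}$ and applying Riesz--Thorin (Theorem \ref{th_samp}). Note also that in the paper \emph{both} the interpolation and the sampling inequalities rest on this Lagrange bound; your claim that interpolation follows from separation plus classical Plancherel--P\'olya arguments conflates the Bessel-type upper bound (Corollary \ref{v2}, which separation does give) with the genuinely harder property of norm-controlled interpolation.

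Two further ingredients you leave implicit are in fact essential and nontrivial. First, the construction requires the uniform lower bound $\bfK_{n\eps}(\zeta,\zeta)\gtrsim n\eps$ on the vicinity $S_M$ of the droplet (Theorem \ref{lubb}\ref{harder}); this is not a soft fact but depends on the boundary scaling limits (Ginibre and $\erfc$ kernels), hence on conditions \ref{Q5}--\ref{Q7} and Sakai regularity. Second, Landau's method here must be run with a bandwidth margin: the inequalities are established for $\calW_{n\rho}$ with $\rho=1\mp\gamma$, the eigenvalue counting is done for concentration operators on these dilated spaces, and the trace asymptotics at the boundary (erfc profile plus the perimeter/``area law'' estimates of Lemma \ref{lemma_perims}) produce the $\tfrac12\Delta Q(p_*)$ density with an $O(\gamma^{-2}L\log L)$ error; the final discrepancy and the uniformity in the zooming sequence come from optimizing $\gamma=L^{-1/3}$. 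Without specifying the margin $\rho=1\pm\gamma$ and these trace estimates, your plan cannot deliver the uniform-in-$\bfp$ conclusion or the quantitative form in Proposition \ref{propop}.
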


\begin{rem} We pause to discuss some context and the meaning of our result.
Excluding a zero-probability event, the following is true: given $\varepsilon>0$, there exists $L_0>0$ such that for any family $\family$, any zooming point $\bfp$ and any $L \geq L_0$,
\begin{align*}
l-\eps\le \liminf_{n\lr\infty}\frac {\# D(p_n,L/\sqrt{n})}{L^{\,2}}\le \limsup_{n\lr\infty}\frac {\# D(p_n,L/\sqrt{n})}{L^{\,2}}\le l+\eps,
\end{align*}
where $l=\Delta Q(p_*)$, $\tfrac12 \Delta Q(p_*)$, or $0$ depending on the regime of $\bfp$.

The zooming sequence $\bfp=(p_n)$ is thus allowed to be family-dependent,
and $p_n$ may for instance track the region where $\family_n$ is most concentrated.

A family $\family$ which satisfies the
conclusion of Theorem \ref{mth2} necessarily has the property that the number of points in $\family_n$ in \textit{any} disc of radius $1/\sqrt{n}$ is eventually uniformly bounded (with a bound depending only on $Q$ and $c$). Thus, such a family is necessarily a \textit{finite union} of asymptotically separated families, with a fixed upper bound on the number of them.
 This indicates that Theorem \ref{mth2} is a low-temperature phenomenon, i.e., that
a condition such as $\beta_n\to\infty$ is needed for the conclusion of the theorem to hold. Our method of proof uses a nontrivial adaptation of ``Landau's method'' of sampling and interpolating families, and is potentially useful for analyzing
more general point-processes which are ``lattice-like'' (i.e.~slight random perturbations of a deterministic lattice).

An estimate in a somewhat similar spirit as Theorem \ref{mth2} (i), formulated at deterministic zooming points in the bulk, is stated in \cite[Theorem 1]{ARSE}. This result, that depends on very different methods, applies to a more restrictive bulk regime where 
$\sqrt{n}\,\dist (p_n,\C\setminus S)\lr\infty$ sufficiently fast.
The fact that the densities in Theorem \ref{mth2} hold globally is of interest since the boundary regime is crucial with respect to freezing problems, see \cite{CSA} as well as the comments in Section \ref{CORE}.
\end{rem}

Returning to the issues, let us immediately dispose of part \ref{pa2} of Theorem \ref{mth2}, while simultaneously introducing certain concepts and results of central importance for our exposition.

Following \cite{A} we introduce a random variable, the \emph{distance from the droplet to the vacuum} by
$$D_n(\family_n)=\max_{1\le j\le n}\left\{\,\delta(\zeta_j)\,\right\},$$
where
$$\delta(\zeta)=\dist\left(\zeta,S\right).$$

The recent ``localization theorem'' in \cite[Theorem 2]{A} implies that under \eqref{freezing}, there is a constant $M=M(c)$ such that almost every random sample $\family=(\family_n)$
has the property that $\family_n\subset S_M=S_{M,n}$ for all large $n$, where $S_M$ is the \emph{$M$-vicinity of the droplet},
\begin{equation}\label{vicinity}S_M=S+D(0,M/\sqrt{n})=\left\{\, \zeta\, ;\, \delta(\zeta)<\frac M {\sqrt{n}}\,\right\}.\end{equation}

To spell it out explicitly: we have the convergence
\begin{equation}\label{local0}\lim_{n_0\lr\infty}\sum_{n=n_0}^\infty \bfP_n^{\,\beta_n}\left(\left\{D_n\ge \frac M {\sqrt{n}}\right\}\right)=0.\end{equation}

Using this, part \ref{pa2} of Theorem \ref{mth2} follows immediately.
Thus there remains only to prove parts \ref{pa1} and \ref{pa3}. This is done in the succeeding sections.

In fact, we shall deduce the following somewhat sharper statement, which clearly implies parts \ref{pa1} and \ref{pa3} of Theorem \ref{mth2}.

\begin{prop}\label{propop} (\emph{``Discrepancy estimates''})
Under the hypothesis of Theorem \ref{mth2}, assume that $L\ge 2$.

In the bulk case \ref{pa1} there exists a deterministic constant $C=C(c)$ such that, almost surely,
\begin{align}\label{eq_d1}
\limsup_{n\lr\infty}
\left|\, \# D(p_n,L/\sqrt{n}) - \Delta Q(p_*)\, L^{\,2}\, \right| \leq C L^{\,\alpha}, \qquad (\alpha=\tfrac 5 3).
\end{align}

In the boundary case \ref{pa3} there exists a deterministic constant $C=C(c)$ such that, almost surely,
\begin{align}\label{eq_d2}
\limsup_{n\lr\infty}
\left|\, \# D(p_n,L/\sqrt{n}) - \tfrac 1 2 \Delta Q(p_*)\, L^{\,2}\, \right| \leq C L^{\,\alpha} \log L, \qquad (\alpha=\tfrac 5 3).
\end{align}
\end{prop}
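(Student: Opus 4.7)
My plan is to combine the almost-sure uniform separation (Theorem \ref{mth}) and the localization \eqref{local0} with a Landau-style sampling/interpolation argument for the weighted polynomial space $\Pol_n$ consisting of holomorphic polynomials of degree $<n$ normalized by $\|f\|_n^2=\int_\C|f|^2e^{-nQ}\,dA$. By Theorem \ref{mth} and \eqref{local0}, it suffices to prove the deterministic statement that any family $\family$ which is eventually $s_0$-separated and contained in $S_M$ satisfies the discrepancy estimates \eqref{eq_d1}, \eqref{eq_d2}; all constants below depend only on $Q$, $c$, $s_0$, $M$.

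For such families, following \cite{AOC,A0}, I first establish simultaneous sampling and interpolation inequalities for $\Pol_n$:
\begin{equation*}
A\,\|f\|_n^2\le\sum_{j=1}^n \omega_{nj}|f(\zetanj)|^2\le B\,\|f\|_n^2,\qquad f\in\Pol_n,
\end{equation*}
with per-point weights $\omega_{nj}\asymp K_n(\zetanj,\zetanj)^{-1}e^{-nQ(\zetanj)}$ and constants $A,B>0$ independent of $n$. For Fekete configurations these are known; for low-temperature configurations I adapt the Carleson-measure and Markov--Bernstein arguments used there, exploiting the $s_0$-separation and localization from the reduction step.

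Fix a zooming sequence $(p_n)\to p_*$ and write $D=D(p_n,L/\sqrt{n})$. The core task is to relate $\#D$ to $\trace T_{\1_D}$, where $T_{\1_D}$ is the Toeplitz operator on $\Pol_n$ with symbol $\1_D$. Standard Bergman kernel asymptotics give
\begin{equation*}
\trace T_{\1_D}=\int_D K_n(\zeta,\zeta)e^{-nQ(\zeta)}\,dA(\zeta)=l\,L^2+O(L\log L),
\end{equation*}
with $l=\Delta Q(p_*)$ in the bulk case and $l=\tfrac12\Delta Q(p_*)$ in the boundary case. Introducing a buffer width $w>0$ and nested discs $D^\pm=D(p_n,(L\pm w)/\sqrt{n})$, and applying the sampling and interpolation inequalities to test functions built from truncated reproducing kernels supported near $D^\pm$, I obtain
\begin{equation*}
\trace T_{\1_{D^-}}-R(L,w)\le \#D\le \trace T_{\1_{D^+}}+R(L,w),
\end{equation*}
where $R(L,w)$ combines the annular trace shift $|\trace T_{\1_{D^\pm}}-\trace T_{\1_D}|=O(Lw)$ with a Hilbert--Schmidt bound of order $O(L/w^\gamma)$ on the commutator of the Bergman projection with $\1_D$, derived from off-diagonal decay of the limit kernels $\kbulk,\kbo$. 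Optimizing $w$ balances the two terms and yields the exponent $\alpha=5/3$; the extra $\log L$ in the boundary case arises from the slower off-diagonal decay of $\kbo$ along the boundary direction.

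The main obstacle is exactly this final comparison: producing the sharp exponent $5/3$ requires quantitative off-diagonal control of $K_n$ that goes beyond bounded-norm Carleson estimates and must be maintained uniformly across bulk and boundary zooming regimes. The Fekete case \cite{AOC,A0} supplies the template; the additional work here is to certify that the Bergman-projection estimates remain valid on the probabilistic event defined by Theorem \ref{mth} and \eqref{local0}, rather than only for optimal configurations, and to track constants so that the buffer optimization yields the advertised $L^{5/3}$ (respectively $L^{5/3}\log L$) rather than a cruder $O(L^2)$ bound.
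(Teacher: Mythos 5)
Your overall strategy (Landau's method via traces of concentration/Toeplitz operators) is the right family of ideas, but there are two genuine gaps. First, the reduction is not valid: it is \emph{not} true that every family which is eventually $s_0$-separated and contained in $S_M$ satisfies the discrepancy estimates, so the problem cannot be made deterministic on the event given by Theorem \ref{mth} and \eqref{local0} alone. Separation plus localization yield only the upper (Bessel/Carleson) half of your proposed inequality, via Corollary \ref{v2}; they give no lower (sampling) bound, since such a configuration may leave a macroscopic hole inside the droplet, where normalized reproducing kernels (which concentrate at scale $1/\sqrt{n}$) are essentially unsampled --- and for such configurations the discrepancy conclusion itself fails. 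The paper's proof requires an additional, genuinely probabilistic input beyond Theorem \ref{mth} and \eqref{local0}: the almost-sure uniform bound \eqref{fin} on the sup-norms of the weighted Lagrange polynomials $\ell_j$, obtained from the exact identity of Lemma \ref{exid} together with Chebyshev and Borel--Cantelli, and it is precisely there that the hypothesis $\beta_n\ge c\log n$ is used (through $n^{2/\beta}\le e^{2/c}$). That bound is what drives both the interpolation inequality \eqref{inter} and the partial Marcinkiewicz--Zygmund inequality \eqref{mz} of Theorem \ref{th_samp}; your plan to recover sampling from ``Carleson-measure and Markov--Bernstein arguments, exploiting separation and localization'' cannot work, because the needed inequality is false under those hypotheses alone.

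Second, your quantitative scheme does not actually produce the exponent $\alpha=\tfrac53$. In the paper the sampling and interpolation inequalities are \emph{not} simultaneous for $\calW_n$: they hold at dilated degrees $n\rho$ with $\rho\le 1-\gamma$ (sampling, and only over $S_{2M}$) and $\rho\ge 1+\gamma$ (interpolation), and the resulting discrepancy is $O(\gamma L^{2}+\gamma^{-2}L)$ in the bulk and $O(\gamma L^{2}+\gamma^{-2}L\log L)$ at the boundary: the term $\gamma L^{2}$ comes from the trace of the concentration operator at the dilated degree ($\trace T\approx(1\pm\gamma)\,l\,L^{2}$ by Lemma \ref{lemma_traces}), and the factor $\gamma^{-2}$ from the eigenvalue-counting Lemma \ref{lemma_eig} used with thresholds $\vartheta=\gamma^{2}/C$ or $1-\gamma^{2}/C$, combined with $\trace(T-T^{2})=O(L)$ resp.\ $O(L\log L)$ (Lemmas \ref{lemma_traces} and \ref{lemma_perims}); optimizing $\gamma=L^{-1/3}$ gives $\tfrac53$. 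Your error budget --- a spatial buffer $w$ with an $O(Lw)$ annular shift plus an $O(L/w^{\gamma})$ commutator term --- contains no term of size (small parameter)$\times L^{2}$, so balancing it would nominally give $O(L)$ or $O(L\log L)$, a bound this method does not deliver and which you do not substantiate; moreover the spatial buffer only ever needs to be of the order of the separation $s$ (as in the paper's $D^{\pm}=D(p_n,(L\pm s)/\sqrt{n})$, which costs only $O(L)$), while the parameter that must be optimized is the degree margin $\gamma$, which is absent from your final comparison. As you acknowledge, this ``final comparison'' is exactly the unproved step, so the claimed exponent is not reached by the argument as proposed.
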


\begin{rem} Similar as for Theorem \ref{mth2}, our main point is that the above discrepancy estimate holds at any (family-dependent) sequence $\bfp=(p_n)_1^\infty$.
 Sharper discrepancy estimates for fixed observation disks that remain sufficiently far away from the boundary of $S$ have appeared in \cite{RS} in the setting of Fekete points. Also \cite[Theorem 1(2)]{ARSE} gives an estimate in this direction for $\beta$-ensembles, in the bulk. On the other hand, we expect the conclusion of Proposition \ref{propop} to be false if $\beta$ remains fixed independently of the number of particles $n$. (Related questions about fluctuations in fixed observation discs have also been studied, for example in \cite{FL}.)
\end{rem}

The exact value of the constant $\alpha$ in \eqref{eq_d1}, \eqref{eq_d2} is not important. Any other value $\alpha<2$ would have done as well for our purposes,
but the choice $\alpha=\tfrac 5 3$ turns out to lead to a particularly smooth and simple exposition.
(We do not make any claims about the
optimal value of $\alpha$ here; see Section \ref{CORE}.)

Our two main results on uniform separation and equidistribution reflect different aspects of the strong repulsions within the
system $\{\zeta_j\}_1^n$ which hold at low temperatures. Indeed, it is easy to see that a family may be equidistributed without being uniformly separated and vice versa. Moreover, it is a household fact  that
Landau's method for proving equidistribution of a family requires only a weak form of separation, namely that one can decompose it as a finite union
of smaller, uniformly separated families.

\begin{conj} We believe that Theorem \ref{mth} is sharp in the sense that if almost sure uniform separation holds for some sequence of inverse temperatures $\beta_n$,
then necessarily $\beta_n\ge c\log n$ for some constant $c>0$.
\end{conj}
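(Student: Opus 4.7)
I would argue by contrapositive: assuming $\liminf_n \beta_n/\log n = 0$, show that $\bfP$-almost surely $\liminf_n \sep_n(\family_n) = 0$. Because $\bfP = \prod_n \bfP_n^{\beta_n}$ is a product measure, the events $A_n(s_0) := \{\sep_n(\family_n) < s_0\}$ are independent across $n$, and the event ``$A_n(s_0)$ occurs infinitely often'' is a tail event; by the second Borel--Cantelli lemma it therefore suffices to exhibit, for every $s_0 > 0$, a subsequence along which $\bfP_n^{\beta_n}(A_n(s_0))$ does not tend to zero.

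The main input would be a short-distance lower bound on the bulk two-point function. Fix an interior point $p$ of $S$ with $\Delta Q(p) > 0$. In analogy with the expected scaling limit of the planar $\beta$-ensemble, one expects
\begin{align*}
\bfR_n^{(2),\beta_n}(\zeta,\eta) \;\geq\; c_{\beta_n}\,(n\Delta Q(p))^{\beta_n+2}\,|\zeta-\eta|^{2\beta_n}
\end{align*}
for $\zeta,\eta$ in a bulk neighbourhood of $p$ and $|\zeta-\eta| \leq 1/\sqrt{n}$. Integrating this inequality over pairs at scaled distance $\leq s_0$ then yields
\begin{align*}
\bfE_n^{\beta_n}\bigl[\,\#\{(j,k): j\neq k,\ |\zetaj-\zetak|\leq s_0/\sqrt{n}\}\,\bigr] \;\geq\; \frac{C\, n\, s_0^{2\beta_n+2}}{\beta_n+1},
\end{align*}
and the right-hand side diverges whenever $\log n - 2\beta_n\log(1/s_0) \to \infty$. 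Taking $s_0 \in (0,1)$ fixed and $n$ along a subsequence with $\beta_n/\log n \to 0$, the expectation above blows up.

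To upgrade this first-moment divergence to a lower bound on $\bfP_n^{\beta_n}(A_n(s_0))$, I would tile a bulk region by $\sim n$ disjoint mesoscopic cells of radius $r/\sqrt{n}$ (with $r$ moderately large) and approximate the indicator of a close pair in cell $k$ by a Bernoulli $B_k$ of mean $\gtrsim s_0^{2\beta_n+2}$. Decorrelation of the low-temperature Coulomb gas across distant cells would make the $B_k$'s behave essentially independently, producing
\begin{align*}
\bfP_n^{\beta_n}(A_n(s_0)) \;\gtrsim\; 1 - \exp(-c\, n\, s_0^{2\beta_n+2}),
\end{align*}
which is bounded below by the previous paragraph. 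An alternative route is Paley--Zygmund applied to the full pair-count, subject to a four-point bound $\bfR_n^{(4)} \leq C\,\bfR_n^{(2)}\otimes\bfR_n^{(2)}$ for pairs in distant cells. The localization theorem of \cite{A} can be used throughout to restrict to configurations in $S_M$ and avoid boundary pathologies.

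The principal obstacle is justifying the short-distance two-point asymptotics uniformly in $\beta_n$. For $\beta = 1$ this is the classical Ginibre behaviour, but for general and possibly large $\beta_n$ no determinantal structure is available. A plausible strategy is a \emph{particle-fusion} argument: freeze particles $3,\ldots,n$ near a Fekete configuration and write the conditional density of $(\zeta_1,\zeta_2)$ explicitly as $|\zeta_1-\zeta_2|^{2\beta_n}$ times a smooth factor whose positive lower bound is extracted from the electrostatic estimates developed in \cite{A,A2}. The delicate point is to track all implied constants finely enough in $\beta_n$ to preserve the $\log n$ threshold; any spurious factor of the form $e^{-c\beta_n^2}$ would blur the conjectured dichotomy. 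Establishing such uniform lower bounds, together with the cell-wise decorrelation estimates used in the probabilistic lifting, is where I expect the serious technical work to lie.
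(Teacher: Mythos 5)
The statement you are addressing is stated in the paper as an open \emph{conjecture}; the paper offers no proof of it, so there is nothing to compare your argument against on the paper's side. Judged on its own terms, your text is a research plan rather than a proof, and the gaps are exactly where the mathematical content of the conjecture lies. Your probabilistic frame is fine: under $\P=\prod_n\bfP_n^{\,\beta_n}$ the events $A_n(s_0)$ are independent, $\{\liminf_n\sep_n\ge s_0\}$ is a tail event, and second Borel--Cantelli reduces everything to showing $\sum_n\bfP_n^{\,\beta_n}(A_n(s_0))=\infty$ for every $s_0>0$ whenever $\liminf\beta_n/\log n=0$. But the two analytic inputs that would make this run are both asserted, not proved. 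First, the short-distance lower bound $\bfR_n^{(2),\beta_n}(\zeta,\eta)\gtrsim c_{\beta_n}(n\Delta Q(p))^{\beta_n+2}|\zeta-\eta|^{2\beta_n}$ with constants controlled uniformly in $\beta_n$ is precisely the kind of statement nobody knows how to establish for general $\beta$-ensembles at $n$-dependent temperature; the known tools in \cite{A,A2} give upper bounds on clustering (that is how Theorem 1.1 of the paper is proved, via Lagrange polynomials), not lower bounds on the two-point intensity, and your ``particle-fusion'' sketch does not explain how to control the conditional partition function ratio uniformly in $\beta_n$, which is where a spurious $e^{-c\beta_n^{\,2}}$ (or worse, $e^{-c\beta_n\log n}$) would enter and destroy the threshold.

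Second, even granting the first-moment divergence, your upgrade to a probability lower bound is not justified: a diverging expected pair-count is compatible with $\bfP_n^{\,\beta_n}(A_n(s_0))\to 0$ if the pair-count is carried by rare events, so you genuinely need the second-moment/Paley--Zygmund route, and the required four-point bound $\bfR_n^{(4)}\le C\,\bfR_n^{(2)}\otimes\bfR_n^{(2)}$ across distant cells, or the ``cell-wise decorrelation'' of the low-temperature gas, is again an unproved structural assumption (and at low temperature correlations are expected to be long-ranged, so this is not an innocuous technicality). In short, the skeleton of the contrapositive is sound and is a sensible way to attack the conjecture, but both load-bearing estimates are open, as you yourself concede; the proposal therefore does not settle the statement, which remains a conjecture.
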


Further comparison with other related work is found in Section \ref{CORE}.

\subsection{Plan of this paper}

In Section \ref{sec_1}, we introduce the basic objects of our theory, namely weighted polynomials. We also prove a few basic (pointwise-$L^p$ and gradient) estimates
for weighted polynomials.

In Section \ref{sec_2}, we prove Theorem \ref{mth} on uniform separation.

In Section \ref{FPD}, we recall a few facts pertaining to asymptotic properties of the reproducing kernel in spaces of weighted polynomials equipped with the $L^2$-norm. This kind
of asymptotic is needed for our later implementation of Landau's method.

In Section \ref{sec_samp}, we formulate suitable sampling and interpolation inequalities and prove that a random sample in the low temperature regime satisfies these
inequalities almost surely.

In Section \ref{sec_equ}, we prove the equidistribution theorem (Theorem \ref{mth2}) and the discrepancy estimates in Proposition \ref{propop}.

In Section \ref{CORE}, we compare with other related work and provide some concluding remarks.

\subsection*{Notational conventions}

For non-negative functions $f,g$, we write $f \lesssim g$ if there exists a constant $C>0$ such that $f \leq C g$ at every point. If $f \lesssim g$ and $g \lesssim f$, we write $f \asymp g$.
Generic constants are denoted $C$, $C_1$, etc. Their meaning may change from line to line.

The usual complex derivatives are denoted $\d=\tfrac 1 2 (\d_x-i\d_y)$ and $\dbar=\tfrac 1 2(\d_x+i\d_y)$. The symbol $\Delta=\d\dbar$ denotes $1/4$ of the standard Laplacian on $\C$.

An unspecified integral $\int f$ always means $\int f\, dA$ except when otherwise is indicated, where $dA=\tfrac 1 \pi \, dx\, dy$.
When $f$ is an integrable function on a disc $D_R$ of radius $R$, we will systematically denote its average value by
\begin{equation}\label{average}\fint_{D_R}f:=\frac 1 {R^2}\int_{D_R}f.\end{equation}

\subsection*{Acknowledgements} Y.~A.~wants to thank Seong-Mi Seo for useful discussions, and Simon Halvdansson \cite{H} for allowing us to reproduce Figure \ref{fekete}.

\section{Weighted polynomials and their basic properties}\label{sec_1}
In this section, we introduce the fundamental objects of this exposition, namely weighted polynomials. These are analogous to bandlimited functions in Landau's setting \cite{L}, and
they play a fundamental role in (for example) weighted potential theory \cite{ST}.

We now come to the definition.
Given any admissible potential $Q$, an integer $n$, and a holomorphic polynomial $q$ of degree at most $n-1$, we designate by
$$f(\zeta)=q(\zeta)\cdot e^{\,-nQ(\zeta)/2}$$
a \emph{weighted polynomial} of order $n$. The totality of such weighted polynomials will be denoted by the symbol $\calW_n$.

In random matrix theories (i.e., when $\beta=1$) it is customary to equip $\calW_n$ with the norm of $L^2=L^2(\C, dA)$. This is natural,
since the reproducing kernel of $(\calW_n,\|\cdot\|_{L^2})$ plays the role of a correlation kernel in this case.
When studying $\beta$-ensembles, it turns out to often be more natural to equip $\calW_n$ with
the norm in $L^{2\beta}$ (cf.~\cite{A,A2,CMMO}).
In the present work, we shall exploit both of these possibilities.

It is convenient to begin by proving a few frequently used estimates for weighted polynomials, starting with the following pointwise-$L^p$ estimate.

\begin{lem} \label{weh} Fix numbers $p>0$ and $s>0$ and suppose that $Q$ is $C^2$-smooth in a neighbourhood $U$ of a point $\zeta_0\in\C$. Let $f$ be a function of the form $f=u\cdot e^{\,-nQ/2}$
where $u$ is holomorphic in $U$ and suppose also that $\Delta Q\le M$ throughout $U$.
Then
for all $n$ large enough that $D(\zeta_0,s/\sqrt{n})\subset U$ we have
$$|f(\zeta_0)|^{\,p}\le n\cdot \frac {C^{\,p}}{s^{\,2}}\int_{D(\zeta_0,s/\sqrt{n})}|f|^{\,p},\qquad (C=e^{\,Ms^{\,2}/2}).$$
\end{lem}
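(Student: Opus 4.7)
The plan is to apply the sub-mean value property to a weighted modification of $|f|^{\,p}$ in which an exponential factor absorbs the local variation of $Q$ across the small disk. Concretely, on $D:=D(\zeta_0,s/\sqrt{n})\subset U$ (which holds for $n$ large by hypothesis) I would introduce
$$\tilde g(\zeta):=|f(\zeta)|^{\,p}\,e^{\,pnM|\zeta-\zeta_0|^2/2}=|u(\zeta)|^{\,p}\,e^{\,pn\psi(\zeta)/2},\qquad \psi(\zeta):=M|\zeta-\zeta_0|^2-Q(\zeta),$$
and aim to show that $\tilde g$ is subharmonic on $D$.

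Granted this, the sub-mean value inequality on $D$ (whose $dA$-area equals $s^{\,2}/n$, by the convention $dA=\tfrac 1\pi dx\,dy$) yields $\tilde g(\zeta_0)\le \tfrac{n}{s^{\,2}}\int_D\tilde g\,dA$. Since $|\zeta-\zeta_0|^{\,2}\le s^{\,2}/n$ on $D$, the exponential factor is bounded by $e^{\,pMs^{\,2}/2}=C^{\,p}$, and therefore
$$|f(\zeta_0)|^{\,p}=\tilde g(\zeta_0)\le\frac{nC^{\,p}}{s^{\,2}}\int_D|f|^{\,p}\,dA,$$
which is the claimed inequality.

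The main content of the argument is then the subharmonicity of $\tilde g$, which I expect to be the only nontrivial step. This rests on two observations. First, since $u$ is holomorphic, $p\log|u|$ is subharmonic on $U$ (the Laplacian picks up non-negative atomic masses at the zeros of $u$), so $|u|^{\,p}=e^{\,p\log|u|}$ is log-subharmonic. Second, by hypothesis $\Delta\psi=M-\Delta Q\ge 0$ throughout $U$, so $\psi$ is subharmonic and $e^{\,pn\psi/2}$ is likewise log-subharmonic. Because products of non-negative log-subharmonic functions are log-subharmonic (logarithms simply add), $\tilde g$ is log-subharmonic, and hence subharmonic, on $D$.

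The only delicate point is the treatment of the zeros of $u$; this can either be handled within the standard framework of upper-semicontinuous log-subharmonic functions, or sidestepped by establishing the inequality first on $\{u\ne 0\}$ (where $\Delta\log\tilde g=-\tfrac{pn}{2}(\Delta Q-M)\ge 0$ pointwise) and then approximating $|u|$ by $(|u|^{\,2}+\eps)^{1/2}$ and letting $\eps\downarrow 0$.
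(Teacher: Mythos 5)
Your proposal is correct and is essentially the paper's own argument: the paper also multiplies $|f|^{\,p}$ by $e^{\,Mpn|\zeta-\zeta_0|^{\,2}/2}$, checks $\Delta\log F\ge -pn\Delta Q/2+Mnp/2\ge 0$ to get (log-)subharmonicity, applies the sub-mean value inequality on $D(\zeta_0,s/\sqrt{n})$, and bounds the exponential factor by $e^{\,pMs^{\,2}/2}=C^{\,p}$. Your extra care with the zeros of $u$ is a fine (and standard) point that the paper leaves implicit.
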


\begin{proof} Consider the function
$$F(\zeta)=|f(\zeta)|^{\,p}\cdot e^{\,Mpn|\zeta-\zeta_0|^{\,2}/2}.$$
By hypothesis, we have whenever $D(\zeta_0,s/\sqrt{n})\subset U$ that
$$\Delta\log F\ge -p n\Delta Q/2+Mnp /2\ge 0\quad \text{on}\quad D(\zeta_0,s/\sqrt{n}).$$
This makes $F$ (logarithmically) subharmonic on $D(\zeta_0,s/\sqrt{n})$, so
$$F(\zeta_0)\le \frac n{s^{\,2}}\int_{D(\zeta_0,s/\sqrt{n})}F.$$
In turn, this implies
$$|f(\zeta_0)|^{\,p}\le n \cdot \frac{e^{\,Mp s^{\,2}/2}}{s^{\,2}}\int_{D(\zeta_0,s/\sqrt{n})}|f|^{\,p}.$$
\end{proof}

In the following, given a subset $\Omega\subset \C$ and a constant $M>0$ we write $\Omega_M$ for the $M/\sqrt{n}$-neighbourhood of $\Omega$, i.e.,
$$\Omega_M=\Omega+D(0,M/\sqrt{n}).$$

\begin{cor} \label{v2} Let $\Lambda$ be a neighbourhood of the droplet $S$ and assume that $Q$ is $C^2$-smooth in a neighbourhood $U$ of $\overline{\Lambda}$
with $\Delta Q\le M$ there. Then for each subset $\Omega$ of $\Lambda$ and each $2s_0$-separated configuration $\family_n=\{\zeta_j\}_1^n$ contained in $\Omega$ we have
$$\frac 1 n\sum_{j=1}^n|f(\zeta_{j})|^{\,p}\le \frac {C^{\,p}}{s_0^{\,2}}\int_{\Omega_{s_0}}|f|^{\,p},\qquad \forall f\in\calW_{n},\, \forall p>0,\forall n\ge n_0,$$
where $C$ depends only on $M$ and $s_0$, and $n_0$ is chosen with $s_0/\sqrt{n_0}<\dist(\Lambda,\C\setminus U)$.
\end{cor}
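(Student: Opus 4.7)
The corollary is a direct consequence of Lemma \ref{weh} combined with the geometric fact that $2s_0$-separation makes certain microscopic discs pairwise disjoint. My plan has just a few short steps.

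First, I would fix $f=u\cdot e^{-nQ/2}\in\calW_n$, $p>0$, and $n\ge n_0$. For each $\zeta_j$ in the configuration, I would apply Lemma \ref{weh} at the point $\zeta_0=\zeta_j$ with parameter $s=s_0$. This is legitimate: since $\zeta_j\in\Omega\subset\Lambda$ and $s_0/\sqrt{n}\le s_0/\sqrt{n_0}<\dist(\Lambda,\C\setminus U)$ by the choice of $n_0$, the disc $D(\zeta_j,s_0/\sqrt{n})$ lies in $U$, so Lemma \ref{weh} yields
\[
|f(\zeta_j)|^p\le n\cdot\frac{C^p}{s_0^{\,2}}\int_{D(\zeta_j,s_0/\sqrt{n})}|f|^{\,p},
\qquad C=e^{\,Ms_0^{\,2}/2}.
\]

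The crucial second step uses the $2s_0$-separation. Recalling the definition $\sep_n(\family_n)=\sqrt{n}\cdot\min_{j\ne k}|\zeta_j-\zeta_k|\ge 2s_0$, one has $|\zeta_j-\zeta_k|\ge 2s_0/\sqrt{n}$ whenever $j\ne k$. By the triangle inequality, the discs $D(\zeta_j,s_0/\sqrt{n})$ are then pairwise disjoint. Moreover, since $\zeta_j\in\Omega$, each of these discs is contained in $\Omega_{s_0}=\Omega+D(0,s_0/\sqrt{n})$.

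Summing the pointwise estimates over $j=1,\dots,n$ and exploiting the disjointness to replace the sum of integrals by a single integral over a subset of $\Omega_{s_0}$,
\[
\sum_{j=1}^n|f(\zeta_j)|^{\,p}\le n\cdot\frac{C^p}{s_0^{\,2}}\sum_{j=1}^n\int_{D(\zeta_j,s_0/\sqrt{n})}|f|^{\,p}\le n\cdot\frac{C^p}{s_0^{\,2}}\int_{\Omega_{s_0}}|f|^{\,p}.
\]
Dividing by $n$ yields the claim. I do not anticipate any real obstacle here; the only non-bookkeeping ingredient is Lemma \ref{weh}, and the geometric setup (disjointness and containment in $\Omega_{s_0}$) is exactly what the hypotheses are designed to provide.
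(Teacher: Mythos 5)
Your proposal is correct and follows essentially the same route as the paper: apply Lemma \ref{weh} at each $\zeta_j$ with $s=s_0$, use the $2s_0$-separation to conclude the discs $D(\zeta_j,s_0/\sqrt{n})$ are pairwise disjoint and contained in $\Omega_{s_0}$, and sum over $j$. The checks you add (that $n_0$ guarantees the discs lie in $U$, and the containment in $\Omega_{s_0}$) are exactly the bookkeeping the paper leaves implicit.
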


\begin{proof} For each $j$, by Lemma \ref{weh},
$$|f(\zeta_{j})|^{\,p}\le n\cdot \frac {C^{\,p}}{s_0^{\,2}}\int_{D(\zeta_{j},s_0/\sqrt{n})}|f|^{\,p},\qquad \forall f\in\calW_n.$$
Here the discs $D(\zeta_{j},s_0/\sqrt{n})$ are pairwise disjoint for $j=1,\ldots,n$ by virtue of the $2s_0$-separation. Hence summing in $j$
proves the desired statement.
\end{proof}

Another basic tool is provided by the following ``Bernstein type'' estimate, cf. \cite{AOC,A2,MMOC}. (We remind that ``$\fint$'' denotes the average value, cf. \eqref{average}.)

\begin{lem}\label{grdlem} Let $K$ be a compact set such that $Q$ is $C^2$-smooth in a neighborhood $U$ of $K$.
Pick an integer $n_0$ so that $1/\sqrt{n_0}< \dist (K,\C\setminus U)$.
Also let $f\in\calW_n$ and $p\in K$ be such that $f(p)\ne 0$. Then there is a constant $C$ depending only on $\max_K\{|\Delta Q|\}$ such that for all $n\ge n_0$,
$$|\nabla|f|(p)|\le C\sqrt{n}\fint_{D(p,1/\sqrt{n})}|f|.$$
\end{lem}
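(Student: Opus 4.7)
The plan is to reduce the estimate to the Bernstein inequality for genuine holomorphic functions via a local holomorphic substitution that absorbs the first-order part of the weight at the point $p$.

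First I would fix $p\in K$ with $f(p)\ne 0$ and introduce the first-order holomorphic Taylor polynomial of $Q$ at $p$, namely $H(\zeta):=Q(p)+2 Q_\zeta(p)(\zeta-p)$, and set $g(\zeta):=u(\zeta)\,e^{-nH(\zeta)/2}$. Since $H$ is entire and $u$ is a holomorphic polynomial, $g$ is entire. The point of this substitution is that the ``defect'' $\psi(\zeta):=Q(\zeta)-\re H(\zeta)$ satisfies $\psi(p)=0$ and $\nabla\psi(p)=0$, because $\re H$ matches $Q$ to first order at $p$. A $C^2$-Taylor estimate on $D(p,1/\sqrt{n})\subset U$ then gives $|\psi(\zeta)|\le C_1 |\zeta-p|^2 \le C_1/n$, so that on this disc one has $|f|=|g|\,e^{-n\psi/2}$, with $e^{-n\psi/2}$ pinched between two positive constants that depend only on the relevant derivatives of $Q$ on $K$. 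In particular $|g|\asymp |f|$ on $D(p,1/\sqrt n)$.

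Next, at the point $p$ itself the vanishing of $\psi(p)$ and $\nabla\psi(p)$ yields the pointwise identity $\nabla|f|(p)=\nabla|g|(p)$ (from the product rule $\nabla|f|=(\nabla|g|)\,e^{-n\psi/2}+|g|\,\nabla(e^{-n\psi/2})$ evaluated at $p$). Combined with the standard identity $|\nabla|g||=|g'|$, valid for a holomorphic $g$ with $g(p)\ne 0$ and proved directly from $|g|^2=g\bar g$, this reduces the problem to controlling the holomorphic derivative $|g'(p)|$ by a mean of $|g|$ on $D(p,1/\sqrt n)$.

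Finally I would bound $|g'(p)|$ in the familiar way. Cauchy's integral formula on the circle $|\zeta-p|=1/(2\sqrt n)$ gives $|g'(p)|\le 2\sqrt n\,\max_{|\zeta-p|=1/(2\sqrt n)}|g(\zeta)|$, and the subharmonicity of $|g|$ applied on each disc $D(\zeta,1/(2\sqrt n))\subset D(p,1/\sqrt n)$ yields $|g(\zeta)|\le 4\fint_{D(p,1/\sqrt n)}|g|$. Chaining these bounds and reinterpreting $|g|$ as a constant multiple of $|f|$ gives $|\nabla|f|(p)|=|g'(p)|\le C\sqrt n \fint_{D(p,1/\sqrt n)}|f|$, as required. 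The only conceptual step is the holomorphic substitution at the outset, which is what makes the non-holomorphic factor $e^{-nQ/2}$ contribute no first-order term to $\nabla|f|(p)$; everything after that is routine Cauchy plus mean-value manipulation.
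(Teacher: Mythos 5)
Your argument is correct and follows essentially the same route as the paper: replace the weight by the exponential of a holomorphic polynomial matching $Q$ at $p$, note that the gradient of the defect vanishes at $p$ so that $\nabla|f|(p)$ coincides with the gradient of the modulus of the resulting entire function, and conclude with a Cauchy estimate combined with the sub-mean-value property. The one place where you fall short of the statement as written is the dependence of the constant: with only the first-order polynomial $H(\zeta)=Q(p)+2\d Q(p)(\zeta-p)$, the defect $\psi=Q-\re H$ satisfies $|\psi(\zeta)|\le C_1|\zeta-p|^{2}$ with $C_1$ controlled by \emph{all} second derivatives of $Q$ (it contains the term $\re\bigl[\d^{2}Q(p)(\zeta-p)^{2}\bigr]$), so your final constant depends on the $C^2$-data of $Q$ near $K$, whereas the lemma asserts dependence only on $\max_K|\Delta Q|$. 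The paper obtains the sharper dependence by including the quadratic term $\d^{2}Q(p)(\zeta-p)^{2}$ in the holomorphic polynomial, which makes the defect exactly $\Delta Q(p)|\zeta-p|^{2}+o(|\zeta-p|^{2})$; this modification is fully compatible with your scheme, since first-order matching at $p$ (and hence your identity $\nabla|f|(p)=\nabla|g|(p)$) still holds. With that adjustment your proof is complete and, if anything, slightly cleaner than the paper's: your single Cauchy circle at radius $1/(2\sqrt n)$ followed by the sub-mean-value inequality replaces the paper's integration over the radii $r\in[1/(2\sqrt n),1/\sqrt n]$.
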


\begin{proof}
Fix an integer $n \ge n_0$ and a point $p\in K$ and define a holomorphic polynomial $H_p$ by
$$H_p(\zeta)=Q(p)+2\d Q(p)\cdot(\zeta-p)+\d^{\,2} Q(p)\cdot (\zeta-p)^{\,2}.$$
A Taylor expansion about $\zeta=p$ gives (for $n\ge n_0$)
\begin{equation}\label{6}n\cdot |Q(\zeta)-\re H_p(\zeta)|\le n\cdot |\Delta Q(p)|\cdot |\zeta-p|^{\,2}+n\cdot o(|\zeta-p|^{\,2})\le M,\quad \text{when}\quad |\zeta-p|\le 1/\sqrt{n}.
\end{equation}
The constant $M$ can be chosen independently of the particular point $p\in K$ and of $n\ge n_0$ by choosing a suitable $M$ strictly larger than the maximum of $|\Delta Q|$
over $K$.

Now if $f=q\cdot e^{\,-nQ/2}\in\calW_n$  satisfies $f(p)\ne 0$ then by a straightforward computation,
$$|\nabla|f|(p)|=|q'(p)-n\d Q(p)\cdot q(p)|e^{\,-nQ(p)/2}.$$
(And moreover, $|\nabla(|q|e^{\,-nQ/2})|=2|\d(q^{1/2}\bar{q}^{1/2}e^{\,-nQ/2})|=|q'-qn\d Q|e^{\,-nQ/2}$.)

In a similar way we find that
$$|\nabla(|q|e^{\,-n\re H_p/2})(\zeta)|=\left|\frac d {d\zeta}(qe^{\,-nH_p/2})(\zeta)\right|.$$
Inserting $\zeta=p$ we see that
$$|\nabla|f|(p)|=\left|\frac d {d\zeta}(qe^{\,-nH_p/2})(p)\right|.$$

Using a Cauchy estimate, we now find that for each $r$ with $1/(2\sqrt{n})\le r\le 1/\sqrt{n}$,
\begin{align*}\left|\frac d {d\zeta}(qe^{\,-nH_p/2})(p)\right|&=\frac 1 {2\pi}\left|\int_{|\zeta-p|=r}\frac {q(\zeta)e^{\,-nH_p(\zeta)/2}}{(\zeta-p)^2}\, d\zeta\right|\\
&\le \frac {2n}{\pi}\int_{|\zeta-p|=r}|q|e^{\,-n\re H_p/2}\, |d\zeta|.
\end{align*}

In view of \eqref{6}, the last expression is dominated by
$$\frac {2n}\pi e^{M/2}\int_{|\zeta-p|=r}|f(\zeta)|\,|d\zeta|.$$
Integrating in $r$ over $1/(2\sqrt{n})\le r\le 1/\sqrt{n}$, we find that
\begin{align*}|\nabla|f|(p)|&\le \frac {4n^{\,3/2}}\pi e^{\,M/2}\int_{1/(2\sqrt{n})}^{1/\sqrt{n}}\, dr\, \int_{|\zeta-p|=r}|f(\zeta)|\,|d\zeta|\\
&\le
4e^{\,M/2}\sqrt{n}\fint_{D(p,1/\sqrt{n})}|f|\, dA.\end{align*}

The proof of the lemma is complete.
\end{proof}

Finally, we recall a (rather weak, but sufficient for our purposes) version of the maximum principle of weighted potential theory. See \cite[Theorem 2.1]{ST} or \cite[Lemma 2.3]{A} for proofs and
more refined versions.

\begin{lem}\label{lem_maxp}
Let $Q$ be an admissible potential. Then each weighted polynomial $f\in\calW_n$ assumes a global maximum on the droplet $S$, i.e., $\|\,f\,\|_{L^\infty(\C)}=\|\,f\,\|_{L^\infty(S)}$.	
\end{lem}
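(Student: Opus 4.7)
The plan is to reduce the global maximum modulus to one taken on $S$ via the obstacle function $\hat Q$ associated with $Q$. Recall (see, e.g., \cite{ST}) that for any admissible $Q$, the function $\hat Q := \sup \mathcal F_Q$ is subharmonic on $\C$, is harmonic on $\C\setminus S^*$, satisfies $\hat Q\le Q$ everywhere with equality on $S^*\supset S$, and has the growth $\hat Q(\zeta)=2\log|\zeta|+O(1)$ as $|\zeta|\to\infty$. (Note we do \emph{not} need condition \ref{Q4} here; we only use $\hat Q=Q$ on $S$, which always holds.)

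Write $f=q\cdot e^{-nQ/2}$ with $\deg q\le n-1$ and consider the auxiliary function
\[
g(\zeta):=\log|q(\zeta)|^2-n\hat Q(\zeta).
\]
First, I would verify that $g$ is subharmonic on the open set $\C\setminus S$: $\log|q|^2$ is subharmonic on all of $\C$ (as an extended real-valued function, with possible $-\infty$ singularities at the zeros of $q$), while $\hat Q$ is harmonic on $\C\setminus S^*\supset \C\setminus S$, so $-n\hat Q$ contributes no negative Laplacian there. Second, from $\deg q\le n-1$ and the growth of $\hat Q$ at infinity we get $g(\zeta)\le 2(n-1)\log|\zeta|-2n\log|\zeta|+O(1)=-2\log|\zeta|+O(1)\to-\infty$.

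Now I apply the maximum principle to $g$ on the (possibly unbounded) open set $\C\setminus S$: the decay to $-\infty$ at infinity lets us conclude $\sup_{\C\setminus S}g\le \sup_{\partial S}g$. Since $\hat Q=Q$ on $\partial S\subset S$, the boundary values equal $\log|q|^2-nQ=2\log|f|$. Combining with the pointwise inequality $-nQ\le -n\hat Q$ valid everywhere, for any $\zeta\in\C\setminus S$ one has
\[
2\log|f(\zeta)|=\log|q(\zeta)|^2-nQ(\zeta)\le g(\zeta)\le \sup_{\partial S}2\log|f|\le 2\log\|f\|_{L^\infty(S)}.
\]
Together with the trivial bound on $S$ itself, this yields $\|f\|_{L^\infty(\C)}=\|f\|_{L^\infty(S)}$.

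There is no real obstacle; the only small technical points are (i) justifying the maximum principle on the unbounded component of $\C\setminus S$, which is immediate from $g\to-\infty$ at infinity, and (ii) handling the $-\infty$ singularities of $\log|q|^2$, which cause no problems since subharmonicity and the maximum principle are formulated precisely to accommodate them.
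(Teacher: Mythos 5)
Your overall strategy is the standard one (the paper itself gives no proof, deferring to \cite{ST} and \cite{A}, whose arguments run exactly along these lines: compare $\log|q|^{2}$ with $n$ times the obstacle function and apply a maximum principle off the droplet). However, as written there is a genuine gap at the key subharmonicity step. You justify that $g=\log|q|^{2}-n\hat Q$ is subharmonic on $\C\setminus S$ by saying that $\hat Q$ is harmonic on $\C\setminus S^*\supset\C\setminus S$; this inclusion is backwards. Since $S=\supp\sigma\subseteq S^*$, one has $\C\setminus S^*\subseteq\C\setminus S$, so harmonicity of $\hat Q$ off the coincidence set does \emph{not} by itself give harmonicity off $S$: on $S^*\setminus S$ (if nonempty) $\hat Q=Q$ and is in general only subharmonic there, which is the wrong sign for your argument. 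This is precisely the point where you either need assumption \ref{Q4} ($S^*=S$), which you explicitly disclaim, or the nontrivial (though standard) identification of $\hat Q$ with the equilibrium potential, $\hat Q=2\int\log|\cdot-\eta|\,d\sigma(\eta)+\const$, equivalently $\Delta\hat Q=\1_S\,\Delta Q\,dA$, which does yield harmonicity of $\hat Q$ on all of $\C\setminus S$ (this is how \cite{ST,A} proceed, via Frostman's theorem and the principle of domination). Without one of these inputs, your maximum principle on $\C\setminus S$ only gives $\|f\|_{L^\infty(\C)}=\|f\|_{L^\infty(S^*)}$, which is weaker than the claim.

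Two smaller points: (i) to conclude $g\to-\infty$ at infinity you use a \emph{lower} bound $\hat Q(\zeta)\ge 2\log|\zeta|-O(1)$; this is true but should be justified, e.g.\ by noting that $\log(1+|\zeta|^{2})-C\in\mathcal{F}_Q$ for $C$ large (using \eqref{eq_qlarge} and lower semicontinuity of $Q$), so $\hat Q$ dominates it; (ii) the identity $\hat Q=Q$ on $S$ holds a priori only quasi-everywhere (Frostman), and becomes "everywhere on $S$, in particular on $\partial S$" only after using continuity of $\hat Q$ and the smoothness of $Q$ near $S$ assumed in the paper. With the harmonicity of $\hat Q$ off $S$ supplied as above, the rest of your argument (upper semicontinuity across the zeros of $q$, treatment of bounded and unbounded components of $\C\setminus S$) is fine, and the proof is correct.
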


\section{Uniform separation}\label{sec_2}
In this section we prove that low-temperature Coulomb gas ensembles are almost surely uniformly separated (Theorem \ref{mth}). To this end we fix a potential $Q$ satisfying the requirements in Theorem
\ref{mth}. More specifically, we fix a neighbourhood $\Lambda$ of $S$ such that $Q$ is $C^2$-smooth in a neighbourhood of the closure $\overline{\Lambda}$.

At the outset, the inverse temperature $\beta=\beta_n$ can be taken arbitrarily subject only to a mild constraint such as $\beta\ge \beta_0$ for some fixed constant $\beta_0>0$. The much more
stringent condition $\beta_n\gtrsim\log n$ will come into play later in our proof.

We now pick a configuration $\{\zeta_k\}_1^n$ and associate the
\textit{weighted Lagrange polynomial} $\ell_j\in\calW_n$ by
\begin{align}\label{nota}
\ell_j(\zeta)=l_j(\zeta)\cdot e^{\,-n(Q(\zeta)-Q(\zetaj))/2},\qquad
 l_j(\zeta)=\prod_{k\ne j}
\frac{\zeta-\zetak}{\zetaj-\zetak}.
\end{align}
Note that $\ell_j(\zetak)=\delta_{jk}$.

We shall in the following regard $\ell_j(\zeta)$ as a random function, depending on the random sample $\family_n=\{\zeta_k\}_1^n$ from $\bfP_n^{\,\beta}$. These kinds of
random functions were systematically used in the papers \cite{A,A2}, and we shall here continue in this direction. (Somewhat related ``Lagrange sections'' have been used previously
in a context of complex geometry, \cite{CMMO}.)

We now introduce random functions of the form
\begin{equation}Y_{j,f}(\zeta):=f(\zeta,\zetaj)\cdot |\ell_j(\zeta)|^{\,2\beta},\end{equation}
where $f$ is an arbitrary but fixed complex-valued, measurable function on $\C^2$, integrable with respect to the normalized Lebesgue measure $dA_2$.

Finally, we let $\mu_1$, the ``1-point measure''
be the distribution of the random variable $\zeta_j$, i.e., $\mu_1$ is the probability measure on $\C$ such that
\begin{equation}\mu_1(D):=\bfP_n^{\,\beta}\left(\left\{\,\zeta_j\in D\,\right\}\right)\end{equation}
for Borel sets $D$. Of course $\mu_1$ is independent of the particular choice of $j$, $1\le j\le n$. (Indeed,
the Radon-Nikodym derivative $d\mu_1/dA$ equals to $\tfrac 1 n \bfR_n^{\, \beta}$.)

The following basic lemma generalizes \cite[Lemma 2.5]{A}.

\begin{lem} \label{exid} The following exact identity holds,
$$\bfE_n^\beta\int_\C Y_{j,f}(\zeta)\, dA(\zeta)=\int_{\C^2}f(\zeta,\eta)\, (d\mu_1\otimes dA)(\zeta,\eta).$$
\end{lem}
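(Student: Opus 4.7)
The plan is to exploit a key algebraic identity that shows how the factor $|\ell_j(\zeta)|^{2\beta}$ acts as an exchange weight: substituting the $j$-th particle position from $\zeta_j$ to $\zeta$ in the Gibbs factor $e^{-\beta H_n}$ is exactly compensated by $|\ell_j(\zeta)|^{2\beta}$. Once this identity is in hand, the statement follows from Fubini and a change of variables that reveals the one-point density.

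First I would write out carefully
$$H_n(\zeta_1,\ldots,\zeta_{j-1},\zeta,\zeta_{j+1},\ldots,\zeta_n) - H_n(\zeta_1,\ldots,\zeta_n) = 2\sum_{k\neq j}\log\frac{|\zeta_j-\zeta_k|}{|\zeta-\zeta_k|} + n\bigl(Q(\zeta)-Q(\zeta_j)\bigr),$$
since only the terms involving the $j$-th particle change under the substitution $\zeta_j \mapsto \zeta$. Multiplying by $-\beta$ and exponentiating, and comparing with the defining formula
$$|\ell_j(\zeta)|^{2\beta} = \prod_{k\neq j}\left|\frac{\zeta-\zeta_k}{\zeta_j-\zeta_k}\right|^{2\beta} e^{-n\beta(Q(\zeta)-Q(\zeta_j))},$$
yields the crucial pointwise identity
$$|\ell_j(\zeta)|^{2\beta}\, e^{-\beta H_n(\zeta_1,\ldots,\zeta_j,\ldots,\zeta_n)} = e^{-\beta H_n(\zeta_1,\ldots,\zeta_{j-1},\zeta,\zeta_{j+1},\ldots,\zeta_n)}.$$

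Next I would expand the left-hand side of the claim using the definition of $Y_{j,f}$ and $\bfE_n^\beta$, apply Fubini (justified by the integrability hypothesis on $f$ and the fact that $Z_n^\beta < \infty$), and substitute the identity above:
$$\bfE_n^\beta\int_\C Y_{j,f}(\zeta)\,dA(\zeta) = \frac{1}{Z_n^\beta}\int_{\C^{n+1}} f(\zeta,\zeta_j)\, e^{-\beta H_n(\zeta_1,\ldots,\zeta_{j-1},\zeta,\zeta_{j+1},\ldots,\zeta_n)}\, dA(\zeta)\,dA_n(\zeta_1,\ldots,\zeta_n).$$
I would then relabel: set $\eta := \zeta_j$ (the original $j$-th coordinate, which now appears only inside $f$) and rename $\zeta$ as the new $j$-th coordinate. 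After this renaming the exponential depends on $\eta$ only through the external measure $dA(\eta)$, so I can integrate out the $n-1$ coordinates other than the $j$-th and isolate
$$\frac{d\mu_1}{dA}(\zeta) = \frac{1}{Z_n^\beta}\int_{\C^{n-1}} e^{-\beta H_n(\zeta_1,\ldots,\zeta_{j-1},\zeta,\zeta_{j+1},\ldots,\zeta_n)}\prod_{k\neq j} dA(\zeta_k),$$
which is the one-point marginal density (independent of $j$ by the symmetry of $H_n$). Collecting terms gives exactly $\int_{\C^2} f(\zeta,\eta)\,d\mu_1(\zeta)\,dA(\eta)$.

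The only real obstacle is the careful bookkeeping in the variable renaming—keeping track of which argument of $f$ was originally $\zeta_j$ versus the dummy $\zeta$, and convincing oneself that after the swap the roles invert cleanly. There is no analytic subtlety beyond Fubini, which is harmless provided $f$ is integrable against $d\mu_1 \otimes dA$ (a standard reduction via truncation if needed). This lemma is essentially a generalization of \cite[Lemma 2.5]{A}, where the analogous identity is written for $f$ depending on a single variable; the extension here just carries an inert parameter $\eta$ through the computation.
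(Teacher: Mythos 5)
Your proposal is correct and follows essentially the same route as the paper: the exchange identity $|\ell_j(\zeta)|^{2\beta}e^{-\beta H_n(\ldots,\zeta_j,\ldots)}=e^{-\beta H_n(\ldots,\zeta,\ldots)}$ (which the paper states without the explicit computation you supply), followed by Fubini and identification of the marginal density $d\mu_1/dA$. The only difference is cosmetic: the paper fixes $j=1$ and compresses the relabelling step into one line.
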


\begin{proof} We can without loss of generality take $j=1$.

It is easy to check (as in \cite{A,A2}) that
$$|\ell_1(\zeta)|^{\,2\beta}e^{\,-\beta H_n(\zeta_1,\zeta_2,\ldots,\zeta_n)}=e^{\,-\beta H_n(\zeta,\zeta_2,\ldots,\zeta_n)}.$$
Consequently,
\begin{align*}\bfE_n^{\,\beta_n}\int_\C Y_{1,f}(\zeta)\, dA(\zeta)&=\int_{\C^{n+1}}f(\zeta,\zeta_1)\frac 1 {Z_n^{\,\beta}}|\ell_1(\zeta)|^{\,2\beta}e^{\,-\beta H_n(\zeta_1,\zeta_2,\ldots,\zeta_n)}\,
dA_{n+1}(\zeta,\zeta_1,\ldots,\zeta_n)\\
&=\int_{\C^{n+1}}f(\zeta,\zeta_1)\frac 1 {Z_n^{\,\beta}}e^{\,-\beta H_n(\zeta,\zeta_2,\ldots,\zeta_n)}\, dA_{n+1}(\zeta,\zeta_1,\ldots,\zeta_n)\\
&=\int_{\C^2}f(\zeta,\zeta_1)\, (d\mu_1\otimes dA)(\zeta,\zeta_1).
\end{align*}
\end{proof}

Recall that we have fixed a neighbourhood $\Lambda$ of the droplet in which $Q$ is $C^2$-smooth and strictly subharmonic. As shown in \cite[Theorem 3]{A}, the system $\{\zeta_j\}_1^n$ is contained in $\Lambda$ with very large probability:
\begin{equation}\label{ex0}\bfP_n^{\,\beta_n}\left(\left\{\,\{\zeta_j\}_1^n\not \subset \Lambda\,\right\}\right)\le Ce^{\,-c_1\beta_n n}\end{equation}
where $C$ and $c_1$ are positive constants.

We next claim that if $C$ is any constant with $C>1$ then there is $n_0$ large enough so that when $n\ge n_0$, we have the following estimate for the conditional expectation of the average value
of $|\ell_1|^{\,2\beta}$ over the disc $D(\zeta_1,2/\sqrt{n})$,
\begin{equation}\label{enoch}\bfE_n^{\,\beta}\left[\fint_{D(\zeta_1,2/\sqrt{n})}|\ell_1|^{\,2\beta}\, dA\,\bigm|\,\{\zeta_j\}_1^n\subset\Lambda\right]\le C.\end{equation}

To prove this we put $\eps_1=2/\sqrt{n}$ and apply Lemma \ref{exid} with $f$ the indicator function of the set $E(\lambda,\eps_1)$ defined by
$$E(\Lambda,\eps_1)=\{(\zeta,\zeta_1)\, ;\, \zeta_1\in \Lambda\,\;\, |\zeta-\zeta_1|<\eps_1\}.$$
From Lemma \ref{exid} we have
\begin{equation*}\bfE_n^{\,\beta}\int_\C Y_{1,\1_{E(\Lambda,\eps_1)}}(\zeta)\, dA(\zeta)=\int_\Lambda \mu_1(D(\zeta_1,\eps_1))\, dA(\zeta_1).\end{equation*}

Here the right hand side is simplified by writing $\mu_1(D(\zeta,\eps_1))$ as the convolution $\mu_1*\1_{D(0,\eps_1)}(\zeta)$, which gives
$\int_\C \mu_1(D(\zeta_1,\eps_1))\, dA(\zeta_1)=
\eps_1^{\,2}$.
In view of \eqref{ex0}, we obtain \eqref{enoch}. (We also see that $C$ in \eqref{enoch} can be chosen arbitrarily close to $1$ by choosing $n_0$ large enough.)

In the following we assume that $n_0$ is large enough that $\Lambda+D(0,3/\sqrt{n_0})$ is contained in the set where $Q$ is $C^2$-smooth, and we take $n\ge n_0$.

Now fix a large parameter $\lambda$. By Chebyshev's inequality and \eqref{enoch} (or rather its counterpart with $\zeta_1$ replaced by $\zeta_j$)
$$\bfP_n^{\,\beta}\left(\left\{\,\fint_{D(\zeta_j,2/\sqrt{n})}|\ell_j(\zeta)|^{\,2\beta}\, dA(\zeta)>\lambda\,\right\}\, \bigm|\,\{\zeta_k\}_1^n\subset \Lambda\,\right)\le C \frac 1 \lambda.$$
A union bound thus gives
\begin{equation}\label{nub}\bfP_n^{\,\beta}\left(\left\{\,\max_{1\le j\le n}\fint_{D(\zeta_j,2/\sqrt{n})}|\ell_j(\zeta)|^{\,2\beta}\, dA(\zeta)>\lambda\,\right\}\, \bigm|\,\{\zeta_k\}_1^n\subset \Lambda\,\right)\le
 C\frac n \lambda.
\end{equation}

Now assume that $\{\zeta_k\}_1^n\subset \Lambda$.

Assuming that $\beta\ge 1/2$, we find by Lemma \ref{grdlem} and Jensen's inequality that for all $\zeta\in\Lambda$ with $f(\zeta)\ne 0$,
$$|\nabla|f|(\zeta)|^{\,2\beta}\lesssim C^{\,2\beta}n^{\,\beta}\fint_{D(\zeta,1/\sqrt{n})}|f|^{\,2\beta}\, dA,\qquad (f\in\calW_n).$$
Hence using \eqref{nub}, we see that there is a constant $C$ independent of $\beta$ with $\beta\ge 1/2$ such that with probability at least $1-Cn/\lambda$ we have
\begin{equation}\label{holl}|\nabla|\ell_j|(\zeta)|\le Cn^{\,\frac 12}\lambda^{\,\frac 1{2\beta}}.\end{equation}
Here \eqref{holl} holds for all $\zeta$ in a neighbourhood of $\Lambda$, with the exception of the finite set of points at which $|\ell_j|$ is not differentiable
(namely the points $\zeta_k$ with $k\ne j$).

Now choose $j,k\in\{1,\ldots,n\}$ with $j\ne k$ so that the distance $|\zeta_j-\zeta_k|$ is minimal. Integrating \eqref{holl} along the straight line-segment $\gamma=[\zeta_j,\zeta_k]$
between these points, we find
\begin{equation}\label{lett}1=||\ell_j(\zeta_j)|-|\ell_j(\zeta_k)||=\left|\,\int_{\gamma}\nabla|\ell_j(\zeta)|\bigcdot (d\re\zeta,d\im\zeta)\,\right|\le C\sqrt{n}\cdot \lambda^{\, \frac1{2\beta}}
\cdot|\zeta_j-\zeta_k|.
\end{equation}
(We have here assumed that $Q$ is smooth in a neighbourhood of the segment $\gamma$. This may of course be assumed by somewhat shrinking $\Lambda$ if necessary.)

Now recall that the spacing of the sample $\{\zeta_l\}_{l=1}^n$ is
$$\sep_n=\sqrt{n}\cdot |\zeta_j-\zeta_k|,$$
so \eqref{lett} says that $\sep_n\ge \lambda^{\,-\frac 1{2\beta}}/C$.

We have shown that, with probability at least $1-Cn/\lambda-Ce^{\,-c_1\beta n}$, we have that $\sep_n\ge \lambda^{\,-1/2\beta}/C$.

Now pick $\eps>0$ and put $\lambda=n^{\,2+\eps}$. We then find with probability at least $1-C_1n^{\,-1-\eps}$ (for some new constant $C_1$) that
$$\sep_n\ge C_2e^{\,-\frac {(2+\eps)\log n}{2\beta}}\ge C_2e^{\,-\frac {2+\eps}{2c}},\qquad \text{if}\qquad \beta \ge c\log n.$$

It follows that if we pick a family $\family=(\family_n)_n$ in the low-temperature regime $\beta_n\ge c\log n$ where $c>0$, and if we take
\begin{equation}\label{sep0eq}s_0=C_2e^{\,-\frac {2+\eps}{2c}}\end{equation}
then
$$\sum_{n=n_0}^\infty \bfP_n^{\,\beta_n}\left(\left\{\,\sep_n(\family_n)<s_0\,\right\}\right)\le C_1\sum_{n=n_0}^\infty \frac 1 {n^{\,1+\eps}}.$$
The right hand side clearly tends to $0$ as $n_0\lr \infty$.

We have shown that almost every family is $s_0$-separated, and our proof of Theorem \ref{mth} is complete. q.e.d.

\medskip

\begin{rem} Our proof above shows that there are positive constants $a$ and $b$ such that for all $\beta\ge 1/2$ and all $\lambda>0$,
\begin{equation}\label{spaz}\bfP_n^{\,\beta}(\sep_n\ge a\lambda^{-\frac 1 {2\beta}})\ge 1-b\tfrac n\lambda.\end{equation}
For example, choosing $\lambda=\lambda_n= nm_n$ where $m_n\to\infty$ slowly, say $m_n=\log n$, we obtain the result that for fixed $\beta$, we have $\sep_n\ge (n\log n)^{-\frac 1 {2\beta}}$ with large probability if $n$ is large enough.
\end{rem}

\section{Further preliminaries: The determinantal case}\label{FPD}

In this section, we recall a few facts pertinent to the well-studied determinantal case $\beta=1$. Perhaps surprisingly, asymptotic properties of the $\beta=1$ kernel are crucially used in our subsequent analysis of low
temperature ensembles, when $\beta_n\gtrsim \log n$.

Consider the Coulomb gas $\{\zeta_j\}_1^n$ in external field $nQ$ at inverse temperature $\beta=1$. This is a determinantal process, i.e., the $k$-point intensity function $\bfR_{n,k}$
is given by a determinant: $$\bfR_{n,k}(\eta_1,\ldots,\eta_k)=\det(\bfK_n(\eta_i,\eta_j))_{i,j=1}^k,$$
where $\bfK_n(\zeta,\eta)$ is a suitable ``correlation kernel''.

In fact, as is well-known, $\bfK_n$ may be taken as the reproducing kernel of the space
$\calW_n$ of weighted polynomials, regarded as a subspace of $L^2=L^2(\C,dA)$. (Cf.~e.g.~\cite{F,M}.)
In the following, we shall always let $\bfK_n$ denote this \emph{canonical correlation kernel}.

Similar as in the earlier works \cite{A0,AOC}, we shall discuss asymptotic properties for the one-point function
$$\bfR_n(\zeta)=\bfK_n(\zeta,\zeta)$$ as well as some off-diagonal estimates pertaining to the
\textit{Berezin kernel}
$$\bfB_n(\zeta,\eta)=\frac {|\bfK_n(\zeta,\eta)|^{\,2}}{\bfK_n(\zeta,\zeta)}.$$

\subsection{Scaling limits and the lower bound property}
The following result will be crucial for our subsequent
usage of sampling and interpolation inequalities. As always we use the symbol $S_M$ to denote the $M/\sqrt{n}$-neighbourhood of the droplet $S$,
$$S_M=S+D(0,M/\sqrt{n}).$$

\begin{thm} \label{lubb} Let $Q$ be an external potential.
\begin{enumerate}[label=(\alph*)]
\item \label{trivial} If $Q$ is $C^2$-smooth in a neighbourhood of the droplet, then there exists a constant $C$ such that
$$\sup_{\zeta\in\C}\bfR_n(\zeta)\le C\cdot n.$$
\item \label{harder} If $Q$ is real-analytic and strictly subharmonic in a neighbourhood of $S$, and if $\d S$ is everywhere regular, then for any
$M\ge 0$ there is a constant $c_M>0$ such that
$$\inf_{\zeta\in S_M}\bfR_n(\zeta)\ge c_M\cdot n.$$
\end{enumerate}
\end{thm}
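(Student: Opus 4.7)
For part (a), I would argue directly from the preparatory estimates. Since $\bfR_n(\zeta_0) = \bfK_n(\zeta_0,\zeta_0) = \sup\{|f(\zeta_0)|^2 : f \in \calW_n,\ \|f\|_{L^2} \le 1\}$ is the diagonal of the reproducing kernel, I first apply Lemma \ref{weh} with $p=2$ at any point $\zeta_0$ in a fixed neighbourhood $\Lambda$ of $S$ where $Q$ is $C^2$-smooth and $\Delta Q$ is bounded. This yields $|f(\zeta_0)|^2 \le Cn\|f\|_{L^2}^2$ for every $f \in \calW_n$, hence $\bfR_n \le Cn$ on $\Lambda$. For $\zeta_0 \in \C \setminus \Lambda$, Lemma \ref{lem_maxp} gives $|f(\zeta_0)| \le \|f\|_{L^\infty(S)}$ for any weighted polynomial; since $S \subset \Lambda$, the previous bound upgrades to a global one.

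The more substantial part is (b). The plan is to produce, for each $\zeta_0 \in S_M$, a weighted polynomial $f \in \calW_n$ satisfying $|f(\zeta_0)|^2 \ge c_M n \|f\|_{L^2}^2$. Under the stronger hypotheses \ref{Q5}-\ref{Q7}, Sakai's regularity theorem (recorded in the remark following \ref{Q7}) says $\partial S$ is a finite union of analytic Jordan curves, and on such potentials the pointwise asymptotics of $\bfK_n$ are well-developed. Specifically, I would invoke the scaling-limit results (e.g.\ \cite{AKMW}): after suitable normalization, $\bfK_n$ converges to the Ginibre kernel at bulk points and to an error-function type boundary kernel at regular boundary points, with convergence uniform in the base point over compact subsets of $\overline{S}$.

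The argument then splits according to the regime of $\zeta_0 \in S_M$. In the bulk, when $\sqrt{n}\,\dist(\zeta_0,\partial S) \to \infty$, the diagonal scaling limit gives $\bfR_n(\zeta_0)/n \to \Delta Q(\zeta_0) \ge \min_S \Delta Q > 0$ by \ref{Q6}. In the boundary regime I write $\zeta_0 = p_n + u_n/\sqrt{n\,\Delta Q(p_n)}$ with $p_n \in \partial S$ nearest to $\zeta_0$ and $u_n$ in a fixed compact subset of the relevant half-plane (determined by $M$); the boundary scaling limit then gives $\bfR_n(\zeta_0)/(n\,\Delta Q(p_n)) \to F(u_n)$ for a continuous, strictly positive function $F$, and compactness in $u_n$ produces the required uniform lower bound. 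Combining the two cases yields $\bfR_n \ge c_M n$ throughout $S_M$.

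The main obstacle is uniformity: the constant $c_M$ must be independent of $\zeta_0 \in S_M$ and of $n$. I expect to secure it from (i) the uniformity built into the scaling-limit results, which relies crucially on real-analyticity \ref{Q5} and regularity of $\partial S$, (ii) compactness of $\overline{S}$ together with the rescaled boundary parameter $u_n$, and (iii) continuity and strict positivity of the limiting diagonal. The estimate is then automatic for large $n$; for small $n$ one invokes positivity and continuity of $\bfR_n$ on the compact set $\overline{S_M}$ directly. A more self-contained alternative would construct the peaking polynomial at $\zeta_0$ via a H\"ormander $\dbar$-solution associated with the holomorphic polarization of $Q$ near $\zeta_0$, but appealing to known kernel asymptotics is the cleanest route.
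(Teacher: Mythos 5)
Your part (a) is the paper's argument: the extremal characterization of the diagonal kernel, Lemma \ref{weh} with $p=2$ on a neighbourhood of $S$, and Lemma \ref{lem_maxp} to pass from $S$ to all of $\C$ (the paper leaves the last step implicit; making it explicit as you do is correct).

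For part (b) you use the same ingredients as the paper — positivity of the limiting densities in the bulk (Ginibre, $R\equiv 1$) and at the boundary (the $\erfc$-profile) — but the mechanism by which you extract a bound uniform over $\zeta_0\in S_M$ and over $n$ differs, and this is exactly the delicate point you flag as the ``main obstacle''. You propose to quote scaling limits that are ``uniform in the base point over compact subsets of $\overline S$''; the results you would cite (and the ones recorded in the paper, Lemma \ref{lemlim} and Theorem \ref{prop_univ}) are not stated in that form: they assert convergence of the rescaled kernels along an arbitrary, possibly $n$-dependent, zooming sequence $(p_n)$, after passing to subsequences along which the regime is fixed and the limit \eqref{limo} exists. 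The paper turns precisely this sequence-based statement into the uniform lower bound by a soft compactness/contradiction argument: if the bound failed, there would be $p_{n_k}\in S_M$ with $\tfrac1{n_k}\bfR_{n_k}(p_{n_k})\to 0$; since any sequence in $S_M$ has a subsequence lying in the bulk or the boundary regime with \eqref{limo} convergent, rescaling about $p_{n_k}$ gives $R_{n_k}(0)\to R(0)>0$, a contradiction. Your direct route is salvageable, but the ``uniformity built into the scaling-limit results'' is not something you can simply invoke — it is logically equivalent to the sequence formulation and would itself be proved by the same subsequence argument (note also that in your boundary case the parameter $u_n$ moves with $n$, so the statement ``$\bfR_n(\zeta_0)/(n\Delta Q(p_n))\to F(u_n)$'' already presupposes such a sequence-based or uniform version). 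So either restate your step as the contradiction/compactness argument above, or supply a genuine proof of base-point-uniform convergence; as written, that step is a gap, albeit one with a standard and short fix. Your remark that small $n$ are handled by positivity and continuity of $\bfR_n$ is unnecessary once the argument is phrased by contradiction, but it is not wrong.
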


Let us briefly recall the proof of \ref{trivial}, which follows from the identity (a general property of reproducing kernels \cite{DuS})
\begin{equation}\label{dus}\bfK_n(\zeta,\zeta)=\sup\left\{\,|f(\zeta)|^2\, ;\,f\in\calW_n,\,\|\,f\,\|\le 1\,\right\}.\end{equation}

Recalling that $|f(\zeta)|^2\le Cn\|\,f\,\|^{\,2}$ for each $f\in\calW_n$ by Lemma \ref{weh}, we finish the proof of \ref{trivial}.

Our proof of the ``lower bound property'' \ref{harder} is more subtle and
requires some preparation.

\begin{rem}
The validity of a property closely related to \ref{harder} was taken as an assumption in \cite{A0}
(``universally translation-invariant property''), while here (b) is shown to be a consequence of the general assumptions on $Q$.
\end{rem}

Our proof of \ref{harder} uses an apriori knowledge of all possible subsequential scaling limits, which will be of frequent use in the sequel. To define
these limits, we recall the standard procedure for taking microscopic limits in planar Coulomb gas ensembles.

Given any sequence $(p_n)$ ($p_n\in S_M$) we consider the \emph{magnification} (or \emph{blow-up}) about $p_n$ by which we mean the mapping
\begin{equation}\label{rescaling}\magn_n:\zeta\longmapsto z,\qquad z=\magn_n(\zeta)=\sqrt{n\Delta Q(p_n)}\cdot (\zeta-p_n)\cdot e^{\,-i\theta_n}.\end{equation}
Here the angular parameter $\theta_n\in\R$ can be chosen arbitrarily, according to convenience. (Note that by assumption \ref{Q6} we have the estimate
$\Delta Q(p_n)\ge c_1$ for some positive $c_1$ depending only on $Q$.)

The rescaled system $\{z_j\}_1^n$ where $z_j=\magn_n(\zeta_j)$ is a new determinantal process with correlation kernel
\begin{equation}\label{recc}K_n(z,w)=\frac 1 {n\Delta Q(p_n)}\bfK_n(\zeta,\eta),\qquad z=\magn_n(\zeta),\quad w=\magn_n(\eta).\end{equation}

Following the convention in \cite{AKM}, we denote by italic symbols objects pertaining to the rescaled process. In particular, we write
$$R_n(z)=K_n(z,z),\qquad B_n(z,w)=\frac {|K_n(z,w)|^{\,2}}{K_n(z,z)}$$
for the 1-point function and the Berezin kernel rooted at $z$, respectively.

We shall use throughout the symbol $G$ for the
usual \textit{Ginibre kernel}
$$G(z,w)=e^{\,z\bar{w}-|z|^{\,2}/2-|w|^{\,2}/2},$$
and we say that a function $L(z,w)$ is ``Hermitian-entire'' if it is Hermitian (i.e. $\overline{L(z,w)}=L(w,z)$) and entire as a function of $z$ and of $\bar{w}$.

We remind that a Hermitian function $c$ is called a \textit{cocycle} if it takes
the form $c(\zeta,\eta)=g(\zeta)\overline{g(\eta)}$ where $g$ is continuous and unimodular. It is a basic fact of determinantal point-processes that a correlation kernel
is only determined ``up to cocycle'', namely if $K$ is
a correlation kernel then $cK$ is another one.

The following lemma follows from \cite[Lemma 2]{AKMW}.

\begin{lem} \label{lemlim} Suppose that $Q$ is real-analytic and strictly subharmonic in a neighbourhood of the closure of a subset $\Omega\subset\C$ and suppose $p_n\in\Omega$. Then there exists a sequence of cocycles $c_n$ so that each subsequence
of the rescaled kernels $(c_nK_n)_1^\infty$ has a further subsequence converging locally uniformly on $\C^2$ to a limiting kernel $K$ of the form
$$K(z,w)=G(z,w)\cdot L(z,w),$$
where $L$ is some Hermitian entire function called a ``holomorphic kernel''.
\end{lem}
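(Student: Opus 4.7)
The plan is to exploit the factorization $\bfK_n(\zeta,\eta)=P_n(\zeta,\bar\eta)\,e^{-n(Q(\zeta)+Q(\eta))/2}$, where $P_n(\zeta,\bar\eta)$ is Hermitian and polynomial of degree $<n$ in each slot, and then Taylor-expand $nQ$ to second order at $p_n$ to peel off the Gaussian factor $G$ from the rescaled kernel. Fix $p_n\in\Omega$. Since $Q$ is real-analytic near $\overline\Omega$, I would write
$$nQ(\zeta)=n\re H_{p_n}(\zeta)+n\Delta Q(p_n)\,|\zeta-p_n|^{\,2}+nE_n(\zeta),$$
with $H_{p_n}(\zeta)=Q(p_n)+2\d Q(p_n)(\zeta-p_n)+\d^{\,2}Q(p_n)(\zeta-p_n)^{\,2}$ the holomorphic quadratic polynomial and $n|E_n(\zeta)|\lesssim n|\zeta-p_n|^{\,3}$; under the magnification $\magn_n$ of \eqref{rescaling}, $nE_n(\zeta)=O(|z|^{\,3}/\sqrt{n})\to 0$ locally uniformly. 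The real part $\re H_{p_n}(\zeta)$ contributes only a unimodular phase once combined with $i\im H_{p_n}(\zeta)$, so the natural cocycle is $c_n(\zeta,\eta):=g_n(\zeta)\overline{g_n(\eta)}$ with the unimodular function $g_n(\zeta):=e^{-in\im H_{p_n}(\zeta)/2}$, which satisfies $g_n(\zeta)\,e^{-n\re H_{p_n}(\zeta)/2}=e^{-nH_{p_n}(\zeta)/2}$.

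After absorbing the resulting holomorphic factor into $P_n$, I would form the Hermitian entire function
$$\widetilde P_n(\zeta,\bar\eta):=P_n(\zeta,\bar\eta)\,e^{-nH_{p_n}(\zeta)/2}\,\overline{e^{-nH_{p_n}(\eta)/2}},$$
and obtain the clean identity
$$c_n(\zeta,\eta)\bfK_n(\zeta,\eta)=\widetilde P_n(\zeta,\bar\eta)\,e^{-n\Delta Q(p_n)(|\zeta-p_n|^{\,2}+|\eta-p_n|^{\,2})/2}\,e^{-n(E_n(\zeta)+E_n(\eta))/2}.$$
Applying \eqref{recc} and the magnification turns the quadratic factor into $e^{-(|z|^{\,2}+|w|^{\,2})/2}=G(z,w)\,e^{-z\bar w}$ and the cubic exponential remainder into $1+o(1)$. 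Setting
$$L_n(z,w):=\frac{1}{n\Delta Q(p_n)}\widetilde P_n(\zeta,\bar\eta)\,e^{-z\bar w},$$
viewed as a function of $(z,\bar w)$, is Hermitian entire, and we have the exact factorization $c_nK_n(z,w)=G(z,w)L_n(z,w)\cdot e^{-n(E_n(\zeta)+E_n(\eta))/2}$.

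The main analytic step is a normal-family argument for $L_n$. By Theorem \ref{lubb}\ref{trivial}, $\bfR_n\le Cn$ globally, so the rescaled $R_n(z)\le C/\Delta Q(p_n)\le C'$ uniformly. A short diagonal computation shows $L_n(z,z)=R_n(z)\,(1+o(1))$ locally uniformly, hence $L_n$ is uniformly bounded on the diagonal. Since $c_n\bfK_n$ remains a positive-definite kernel (as $|c_n|\equiv 1$), Cauchy--Schwarz gives $|G(z,w)L_n(z,w)|^{\,2}\le L_n(z,z)L_n(w,w)$, and combining with $|G(z,w)|^{\,2}=e^{-|z-w|^{\,2}}$ yields
$$|L_n(z,w)|^{\,2}\le C''\,e^{|z-w|^{\,2}},$$
a local uniform bound on all of $\C^{\,2}$. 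Montel's theorem (applied to the entire functions $L_n(z,w)$ in the variables $(z,\bar w)$) then extracts, from any subsequence, a further locally uniformly convergent subsequence, with Hermitian entire limit $L$. Passing to the limit in the factorization above proves $K=G\cdot L$.

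The main obstacle is the algebraic bookkeeping in constructing the cocycle: one must verify that the absorption of the holomorphic Taylor jet produces a genuinely Hermitian entire $\widetilde P_n$ while preserving positivity on the diagonal, so that both the upper bound from Theorem \ref{lubb}\ref{trivial} and the Cauchy--Schwarz estimate transfer cleanly to $L_n$. Once this bookkeeping is in place, the Montel-type conclusion is routine.
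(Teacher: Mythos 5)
Your proposal is correct, but it takes a genuinely different (and more self-contained) route than the paper, which does not reprove the lemma at all: it defers to \cite[Lemma 2]{AKMW} and the normal-families argument of \cite{AKM}, where the key device is the \emph{polarization} of the real-analytic potential, i.e.\ a Hermitian-analytic extension $Q(\zeta,\eta)$ of $Q$, factored out of the kernel before applying Montel; that is precisely why the paper calls real-analyticity ``crucial''. You instead peel off only the holomorphic quadratic jet $H_{p_n}$ together with the explicit Hermitian-entire factor $e^{-z\bar w}$, so the unknown part $L_n$ is Hermitian entire by construction, and the cubic Taylor remainder enters only through the scalar factor $e^{-n(E_n(\zeta)+E_n(\eta))/2}\to 1$; Cauchy--Schwarz for the positive kernel together with the global bound of part \ref{trivial} of Theorem \ref{lubb} (note that the $e^{nE_n}$ corrections cancel exactly in the inequality $|G(z,w)L_n(z,w)|^{2}\le L_n(z,z)L_n(w,w)$) then yields the locally uniform bound $|L_n(z,w)|^{2}\lesssim e^{|z-w|^{2}}$, which is all Montel needs, and the Hermitian-entire property passes to the limit. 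What your route buys is weaker regularity: it only uses locally uniform $C^3$ bounds on $Q$ near $\overline{\Omega}$ and $\inf_{\overline\Omega}\Delta Q>0$, not real-analyticity, so it proves a slightly more general compactness statement than the cited one (at the cost of not producing the finer polarized structure that \cite{AKM} exploits elsewhere). Two points you should state explicitly: first, your bound on $L_n$ is a bound on each fixed compact subset of $\C^{2}$ for $n$ large (since $e^{nE_n(\zeta)}$ is only controlled when $|z|\le\rho$ and $n\ge n_\rho$), not a bound uniform over all of $\C^{2}$ --- this suffices for normal families but the phrasing ``on all of $\C^{2}$'' is misleading; second, the uniformity of $nE_n=O(|z|^{3}/\sqrt n)$ over the sequence $(p_n)$ requires bounded third derivatives and a uniform lower bound on $\Delta Q$ near $\overline{\Omega}$, so one should take $\overline{\Omega}$ compact, as it is in all applications in the paper.
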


\begin{proof}[Remark on the proof] (Cf.~\cite[Lemma 2]{AKMW}.) The existence of suitable limiting kernels is shown using a standard normal families argument in \cite{AKM}. We note that the real-analyticity of $Q$ in
$\Omega$ is crucially used in this argument, which otherwise works exactly in the same way irrespective of whether the point $p_n$ is fixed or $n$-dependent, and whether the angle parameter $\theta_n$
is $n$-dependent or
not.
\end{proof}

A subsequential limit $K=GL$ in Lemma \ref{lemlim} is the correlation kernel of a unique limiting determinantal point field $\{z_j\}_1^\infty$ (see e.g.~\cite{S}).
This limit in turn is determined by the \textit{limiting $1$-point function}
$$R(z)=\lim R_{n_k}(z)=L(z,z).$$ For example, if $(p_n)$ is in the bulk regime we obtain
that $R\equiv 1$, which is characteristic for the usual infinite Ginibre ensemble (with correlation kernel $G$). This universal bulk-limit follows from well-known
(``H\"{o}rmander-type'') estimates, e.g. \cite[Theorem 5.4]{AKM}, and in particular is independent of choice of angle-parameters $\theta_n$ in \eqref{rescaling}.

If the zooming points $(p_n)$ are in the boundary regime, the microscopic behaviour can be described in terms of the $\erfc$-kernel $L(z,w)=F(z+\bar{w})$ where $F$ is the function
\begin{align}\label{eq_fbf}
\fbf(z)
=\frac{1}{ \sqrt{2\pi}} \int_{-\infty}^{\,0} e^{-(z-t)^{\,2}/2}\, dt=\frac{1}{2} \erfc\left( \frac z {\sqrt{2}}\right).
\end{align}

We now fix the angle-parameters $\theta_n$ appropriately. For this, assume that the boundary $\d S$ consists of finitely many $C^1$-smooth
Jordan curves.

Consider for each (large) $n$ the unique point $q_n\in \d S$ that is closest to $p_n$. We choose $\theta_n$ so that
$e^{\,i\theta_n}$ is the outwards unit normal to $\d S$ at $q_n$.

We can further assume (by passing to a subsequence if necessary)
that the limit
\begin{equation}\label{limo}l=\lim_{n\lr\infty}\sqrt{n\Delta Q(p_n)}\cdot (p_n-q_n)\cdot e^{\,-i\theta_n}\end{equation}
exists.

\begin{figure}[h]
\begin{center}
\includegraphics[width=.3\textwidth]{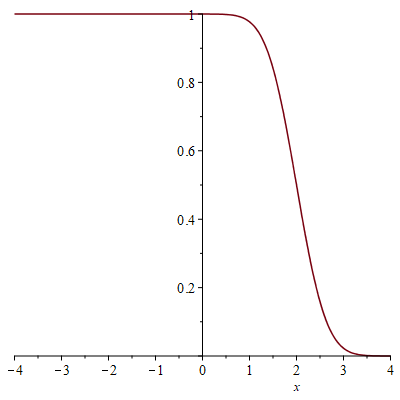}\hfil
\includegraphics[width=.3\textwidth]{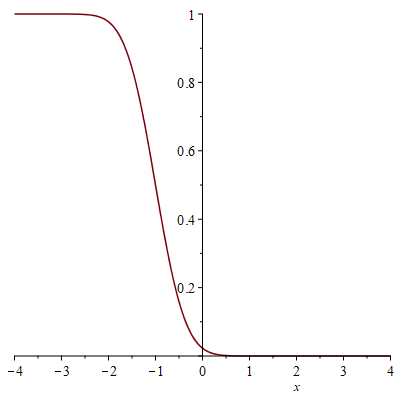}
\end{center}
\caption{The density profiles $x\longmapsto F(2x+2l)$ with $l=-2$ (left) and $l=1$ (right).}
\label{fig0}
\end{figure}

\begin{thm} \label{prop_univ} Suppose that $Q$ is real analytic and strictly subharmonic in a neighbourhood of the droplet.
\begin{enumerate}[label=(\Alph*)]
\item \label{aa} If $(p_n)$ is in the bulk regime there is a unique limiting $1$-point function, namely $R\equiv 1$.
\item \label{bb} Suppose that $S$ is connected and the boundary $\d S$ is everywhere smooth.  Then if $(p_n)$ is in the boundary regime and the limit \eqref{limo} holds,
there is also a unique limiting $1$-point function, namely $$R(z)=F(z+\bar{z}+2l).$$
\end{enumerate}
\end{thm}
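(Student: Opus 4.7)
The plan is to combine Lemma~\ref{lemlim} with symmetry considerations. Any subsequential limit takes the form $K(z,w)=G(z,w)L(z,w)$ with $L$ Hermitian-entire, and such an $L$ is uniquely determined by its diagonal $R(z)=L(z,z)$: writing $L(z,w)=\sum c_{ij}z^i\bar w^j$, the relation $L(z,z)=\sum c_{ij}z^i\bar z^j$ is a formal identity in the independent formal variables $z,\bar z$, so $R$ recovers every coefficient $c_{ij}$. It thus suffices to identify $R$ uniquely in each regime, after which uniqueness of the limiting determinantal process is standard.

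Part~\ref{aa} is a direct consequence of the H\"ormander-type bulk universality of \cite[Theorem~5.4]{AKM}, whose proof depends only on strict subharmonicity of $Q$ in a neighbourhood of the droplet and applies equally well to sequences $(p_n)$ in the bulk regime and to any choice of angles $\theta_n$. The conclusion that $R_n(z)\to 1$ locally uniformly, combined with the polarization argument above, gives $L\equiv 1$ and $K\equiv G$.

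For part~\ref{bb}, the new ingredient is \emph{tangential translation invariance} of $L$. Since $\partial S$ is a smooth Jordan curve and $q_n\in\partial S$ is the unique nearest point to $p_n$, the outward normal $e^{i\theta_n}$ and tangent $ie^{i\theta_n}$ are well-defined and vary stably with $n$. For any fixed $t\in\R$, the shifted sequence
\[ p_n'=p_n+\frac{t\,ie^{i\theta_n}}{\sqrt{n\Delta Q(p_n)}} \]
remains in the boundary regime with the same limit parameter $l$ in \eqref{limo} (the closest boundary point changes by $o(1/\sqrt n)$), and a diagonal extraction arranges the rescaled kernels about $(p_n)$ and $(p_n')$ to converge along a common subsequence with compatible cocycles from Lemma~\ref{lemlim}. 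Comparing the two limits yields $L(z+it,w+it)=L(z,w)$ for every $t\in\R$. Writing $L(z,w)=M(z,\bar w)$ with $M$ entire in both variables, and using $\overline{w+it}=\bar w-it$ on $\R$, the identity $M(z+it,\bar w-it)=M(z,\bar w)$ persists for all complex $t$ by analytic continuation. Setting $t=iu$ with $u\in\C$ gives $M(z-u,\bar w+u)=M(z,\bar w)$, so $M$ depends only on $z+\bar w$. Hence $L(z,w)=H(z+\bar w)$ for some entire $H\colon\C\to\C$, real-valued on $\R$, and $R(z)=H(2\re z)$.

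It remains to identify $H$ as a translate of $F$. The projection property of $\bfK_n$ in $L^2(\C,dA)$ passes to the limit as $\int K(z,u)K(u,w)\,dA(u)=K(z,w)$; substituting $K=G\cdot H(z+\bar w)$ and evaluating the Gaussian integrals reduces this to a one-dimensional convolution equation on $H$. Combined with the matching asymptotics $H(u)\to 1$ as $u\to-\infty$ (from part~\ref{aa} applied to interior zooming points) and $H(u)\to 0$ as $u\to+\infty$ (from exterior localization \eqref{local0}), this equation selects exactly the translates $H(u)=F(u+c)$. The geometric normalization \eqref{limo} fixes $c=2l$: under the magnification, $p_n\mapsto 0$ and $q_n\mapsto -l+o(1)$, so the droplet becomes the half-plane $\{\re z\le -l\}$ and the profile must be $F(\,\cdot\,+2l)$. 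The main obstacle is precisely this last identification: one must show that the convolution equation, together with the asymptotic and positivity constraints, uniquely selects the $\erfc$-profile. This classification is the content of the boundary universality arguments in \cite{AKMW} and related works, which we adapt to our setting of $n$-dependent boundary sequences.
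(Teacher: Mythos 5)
Your part \ref{aa} coincides with the paper's argument (H\"ormander-type estimates, \cite[Theorem 5.4]{AKM}, independence of the angles $\theta_n$), and the polarization remark that $L$ is determined by its diagonal is fine. The problem is part \ref{bb}, where your two key steps do not hold up. First, the tangential-translation-invariance argument is circular: rescaling about $p_n$ and about the shifted points $p_n'$ and extracting a common subsequence only shows that the $it$-translate of one subsequential limit agrees (up to cocycle) with \emph{some} subsequential limit taken along the sequence $(p_n')$. It does not show that a single limiting kernel $L$ satisfies $L(z+it,w+it)=L(z,w)$ unless you already know that the limit is unique -- which is exactly the statement being proved. This is precisely the known obstruction that makes boundary universality genuinely hard: the soft normal-families machinery of Lemma \ref{lemlim} (and of \cite{AKM}) yields the structure $K=GL$ and can classify \emph{translation-invariant} limits, but proving that every boundary limit is translation invariant required the completely different (orthogonal-polynomial/Bergman-asymptotic) analysis of \cite{HW,HW2}. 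The paper's proof of \ref{bb} is simply to invoke the leading term of the edge asymptotics of \cite{HW2} (in its version for $n$-dependent zooming points), which bypasses this issue.

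Second, even granting translation invariance, your classification step is not correct as stated. A Hermitian projection kernel of the form $G(z,w)H(z+\bar w)$ that commutes with imaginary translations corresponds, via the Bargmann transform, to $H=\gamma*\1_E$ for an arbitrary measurable set $E\subset\R$ ($\gamma$ the standard Gaussian density); the conditions $H(u)\to1$ as $u\to-\infty$ and $H(u)\to0$ as $u\to+\infty$ do not force $E$ to be a half-line (e.g.\ $E=(-\infty,0]\cup[1,2]$ satisfies both), so the ``convolution equation plus asymptotics'' does not select the $\erfc$-translates; one needs further input such as Ward's equation as in \cite{AKM}, or the full expansion of \cite{HW2}. (Passing the exact reproducing identity, rather than a mass-one \emph{inequality}, to the limit also needs a tail estimate you have not supplied.) Your closing sentence defers exactly these points to \cite{AKMW} ``and related works, which we adapt'', but the adaptation to $n$-dependent boundary sequences and the identification of the $\erfc$-profile are the substance of the theorem, so as written the proposal has a genuine gap rather than an alternative proof.
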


In the language of point-processes, the theorem says that the $n$-point process $\{z_j\}_1^n$ converges to the point field with correlation kernel $K=G$ in case \ref{aa} and $K=K_l$
in case \ref{bb} where
\begin{equation}\label{Kl}K_l(z,w)=G(z,w)F(z+\bar{w}+2l).\end{equation}
These kernels interpolate between the Ginibre kernel $G$ at $l=-\infty$ and the trivial kernel $0$ at $l=+\infty$.
Figure \ref{fig0} shows the corresponding density profile $R(x)=K_l(x,x)$ for a few specific values of $l$.

\begin{proof} Part \ref{aa} has already been proved above and \ref{bb} follows immediately from the leading term of the edge-asymptotic theorem in \cite{HW2} (or rather, by its counterpart
for $n$-dependent zooming points $p_n$). See also \cite{HW}, which concerns simply-connected droplets (while
\cite{HW2} applies to multi-connected ones).
\end{proof}

\begin{proof}[Proof of Theorem \ref{lubb}\ref{harder}] If the conclusion of \ref{harder} fails,
there is a sequence $(p_{n})$ with $p_n\in S_M$ for each $n$, and a subsequence $n_k$ of the positive integers such that $\lim_{k\to\infty}\tfrac 1 {n_k}\bfR_{n_k}(p_{n_k})= 0$.
Rescaling about $p_n$ as above, we then find that
$\lim_{k\to\infty}R_{n_k}(0)= 0$. This contradicts Theorem \ref{prop_univ}.
\end{proof}

\subsection{Some auxiliary estimates}

A frequently useful property of the Ginibre kernel is its Gaussian off-diagonal decay,
\begin{equation}\label{Gdecay}|G(z,w)|^{\,2}=e^{\,-|z-w|^{\,2}}.\end{equation}
For the kernels $K_l$ in \eqref{Kl} there is also off-diagonal decay, albeit much slower.

\begin{lem} \label{kboo} There is a constant $C$ independent of $l\in\R$ and $z,w\in\C$ such that
$$|K_l(z,w)|\le C\frac {e^{\,-|\re(z-w)|^{\,2}/2}} {1+|\im(z-w)|}.$$
\end{lem}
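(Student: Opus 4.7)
The plan is to reduce the estimate to a pointwise bound on the function $F$ of \eqref{eq_fbf} along horizontal lines in the complex plane, and then exploit Gaussian oscillation via a single integration by parts.

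First I would observe that by \eqref{Gdecay} the kernel factorizes in modulus as
\[
|K_l(z,w)| = e^{-|z-w|^2/2}\,|F(z+\bar w+2l)|.
\]
Write $z-w=a+ib$ with $a=\re(z-w)$ and $b=\im(z-w)$; then $|z-w|^2=a^2+b^2$. Crucially, $\im(z+\bar w+2l)=\im z-\im w=b$, while $\re(z+\bar w+2l)$ is some real number $s$ that is otherwise unconstrained. Thus the claim reduces to the uniform inequality
\[
|F(s+ib)| \le C\,\frac{e^{b^2/2}}{1+|b|},\qquad s,b\in\R,
\]
because multiplying by $e^{-(a^2+b^2)/2}$ then cancels the $e^{b^2/2}$ and leaves precisely the stated bound.

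Next I would set up the one-dimensional integral. Starting from the definition \eqref{eq_fbf} and substituting $u=s-t$, I expand the quadratic:
\[
-(s+ib-t)^2/2 = -(s-t)^2/2 + b^2/2 - ib(s-t),
\]
yielding
\[
F(s+ib) \;=\; \frac{e^{b^2/2}}{\sqrt{2\pi}}\int_{s}^{\infty} e^{-u^2/2}\,e^{-ibu}\,du.
\]
It remains to show that $I(s,b):=\int_s^\infty e^{-u^2/2}e^{-ibu}\,du$ satisfies $|I(s,b)|\le C/(1+|b|)$ uniformly in $s$.

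For $|b|\le 1$ this is immediate from the trivial bound $|I(s,b)|\le \int_{-\infty}^\infty e^{-u^2/2}\,du=\sqrt{2\pi}$. For $|b|\ge 1$, the key step — and the main (mild) obstacle — is to extract the decay from the oscillatory factor $e^{-ibu}$. I would integrate by parts with $dv=e^{-ibu}du$, obtaining
\[
I(s,b) \;=\; \frac{1}{ib}\,e^{-ibs}e^{-s^2/2} \;-\; \frac{1}{ib}\int_s^{\infty} u\,e^{-u^2/2}\,e^{-ibu}\,du.
\]
The first term is bounded by $1/|b|$, while the remainder is controlled by
\[
\frac{1}{|b|}\int_{-\infty}^{\infty}|u|\,e^{-u^2/2}\,du = \frac{2}{|b|}.
\]
Hence $|I(s,b)|\le 3/|b|$ whenever $|b|\ge 1$. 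Combining the two regimes gives $|I(s,b)|\le C/(1+|b|)$, which feeds back into the expression for $F(s+ib)$ and completes the proof.
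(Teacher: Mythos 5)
Your proof is correct. You and the paper start from the same factorization $|K_l(z,w)|=e^{-|z-w|^{2}/2}\,|F(z+\bar w+2l)|$ and both reduce the lemma to a uniform bound $|F(s+ib)|\le C\,e^{b^{2}/2}/(1+|b|)$ for $s,b\in\R$, but you extract the decay in $b=\im(z-w)$ differently. The paper deforms the contour defining $F$ through the point $a=\re(z+\bar w+2l)$: the horizontal piece is bounded by $F(a)\le 1$, and the vertical piece is expressed through Dawson's integral $D$, whose asymptotics $D(x)=\tfrac{1}{2x}+O(x^{-3})$ (quoted from the literature) supply the factor $1/(1+|\im(z-w)|)$. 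You instead write $F(s+ib)=\frac{e^{b^{2}/2}}{\sqrt{2\pi}}\int_s^{\infty}e^{-u^{2}/2}e^{-ibu}\,du$ along the horizontal line and obtain the $1/|b|$ decay from a single integration by parts, treating $|b|\le 1$ trivially; the uniformity in $s$ gives independence of $l$, since $l$ enters only through the unconstrained real shift $s$. Your route is more elementary and self-contained: no contour deformation, no special-function asymptotics, and it produces explicit constants. The paper's route, on the other hand, recycles the computation from \cite[Lemma 8.5]{AOC} and isolates the vertical-segment contribution in the familiar form of Dawson's function, which is convenient when one wants finer information about the erfc-kernel. Either argument fully establishes the lemma.
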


\begin{proof} The lemma follows from the proof of \cite[Lemma 8.5]{AOC}; it is convenient to recall the argument in some detail.

We start with the observation that
$$|K_l(z,w)|^{\,2}=e^{\,-|z-w|^{\,2}}|F(z+\bar{w}+2l)|^{\,2}.$$
Now write
$z+\bar{w}+2l=a+ib$ where $a=\re(z+\bar{w}+2l)$ and $b=\im(z-w)$.

By Cauchy's theorem, we have $F(z)=\frac 1 {\sqrt{2\pi}}\int_{-\infty}^z e^{\,-u^2/2}\, du$ where we may choose any suitable contour of integration connecting $-\infty$
to the point $z$. We choose the contour $\gamma=(-\infty,a]\cup [a,a+ib]$ and find
\begin{align*}|F(z+\bar{w}+2l)|&\le \left|\,\frac 1 {\sqrt{2\pi}}\int_{-\infty}^{\,a}e^{\,-u^2/2}\, du\,\right|+\left|\,\frac 1 {\sqrt{2\pi}}\int_a^{\,a+ib}e^{\,-u^2/2}\, du\,\right|\\
&\le F(a)+\left|\,\frac {e^{-a^2/2}} {\sqrt{2\pi}}\int_0^be^{\,-iat+t^2/2}\, idt\,\right|\\
&\le 1+\frac {e^{\,-a^{\,2}/2+b^{\, 2}/2}} {\sqrt{2\pi}}\cdot D\left(\frac b{\sqrt{2}}\right),
\end{align*}
where $D(x)$ is \emph{Dawson's integral},
$$D(x)=e^{\,-x^{\,2}}\int_0^{\,x}e^{\,t^{\,2}}\, dt.$$

We have shown that
\begin{align}\label{p0}|K_l(z,w)|\le e^{\,-|z-w|^{\,2}/2}+\frac {e^{\,-[\re(z-w)]^{\,2}/2}}{\sqrt{2\pi}}\cdot D\left(\frac {|\im(z-w)|}{\sqrt{2}}\right).\end{align}

We next use the asymptotic
\begin{align}\label{p1}D(x)=\tfrac 1 {2x}+O(\tfrac 1 {x^{\,3}}),\qquad (x\lr\infty),\end{align}
see e.g.~\cite[p.~406]{SO}. Together with \eqref{p0} this implies
\begin{equation}\label{p2}\begin{split}|z-w|\cdot|K_l(z,w)|\le \,&|z-w|\cdot e^{\,-|z-w|^{\,2}/2}\\
 &+|\re(z-w)|\cdot e^{\,-[\re(z-w)]^{\, 2}/2}\frac 1 {\sqrt{2\pi}} \cdot D\left(\frac {|\im(z-w)|}{\sqrt{2}}\right)\\
&+
e^{\,-[\re(z-w)]^{\, 2}/2}\cdot |\im(z-w)|\cdot D\left(\frac {|\im(z-w)|}{\sqrt{2}}\right)\\
 &\le C_1\end{split}\end{equation}
with a large enough absolute constant $C_1$ (independent of $z,w$, and $l$).

Combining the estimates \eqref{p0}, \eqref{p1} and \eqref{p2}, we finish the proof.
\end{proof}

Recall that a set $E \subseteq \C$ is said to have finite perimeter if its indicator function $\1_E$ has bounded variation, and in this case we define
$$\perim E=\operatorname{var} \1_E$$
(see e.g.~\cite[Section 5]{evga92}). This is the same as the linear Hausdorff measure \cite{GM} of the measure theoretic boundary of $E$. In our subsequent applications, the set $E$
will have a piecewise smooth boundary, so the practical-minded reader may think of the usual arclength.
We will also denote by $|E|=\int_E\, dA$ the normalized Lebesgue measure of $E$.

We shall now estimate two integrals that come in naturally in connection with our method for proving equidistribution (cf.~also \cite{AOC,A0}). We will use the notation $\log^+t=\max\{0,\log t\}$.

\begin{lem}\label{lemma_perims}
Let $E$ be a bounded measurable subset of $\C$ with finite perimeter. There is then a universal constant $C$ such that
\begin{align}\label{eq_bv_1}
&\int_E \int_{\C \setminus E} |G(z,w)|^{\,2}\, dA(z)\,dA(w)
\leq C \cdot \perim E,
\\\label{eq_bv_2}
&\int_E \int_{\C \setminus E} |K_l(z,w)|^{\,2}\, dA(z)\,dA(w)
\leq C \cdot \perim E \cdot \big(1+
\log^+ \tfrac{|E|}{\perim E} \big).
\end{align}
\end{lem}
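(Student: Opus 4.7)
The plan is to reduce both estimates to the standard translation-invariance trick combined with the BV characterization of perimeter. Since $|G(z,w)|^2 = e^{-|z-w|^2}$ depends only on $h := z-w$, and Lemma \ref{kboo} provides the $l$-uniform envelope $|K_l(z,w)|^2 \le C\, e^{-|\re(z-w)|^2}/(1+|\im(z-w)|)^2$, which also depends only on $h$, the substitution $h = z-w$ recasts each double integral as
\begin{align*}
\int_{\C} \phi(h)\, |E \setminus (E+h)|\, dA(h),
\end{align*}
for the appropriate non-negative envelope $\phi(h)$. The standard BV inequality
\begin{align*}
\int_{\C} |\1_E(z+h) - \1_E(z)|\, dA(z) \le C_0\, |h| \cdot \perim E,
\end{align*}
together with the trivial bound $|E \setminus (E+h)| \le |E|$, yields
\begin{align*}
|E \setminus (E+h)| \le \min\bigl(\,|E|,\; C_0\, |h|\, \perim E\,\bigr).
\end{align*}

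For the Ginibre estimate \eqref{eq_bv_1} I would use only the linear-in-$|h|$ bound, since $\int_{\C} |h|\, e^{-|h|^2}\, dA(h) < \infty$, obtaining at once a bound of the form $C \cdot \perim E$.

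For the $K_l$ estimate \eqref{eq_bv_2} I would set $R := |E|/\perim E$ and split the $h$-integral into the two regions $\{|h| \le R\}$ and $\{|h| > R\}$. On the first region, use $|E \setminus (E+h)| \le C_0 |h| \cdot \perim E$ and estimate $\int_{|h| \le R} |h|\, \phi(h)\, dA(h)$; writing $h = a+ib$ and bounding $|h| \le |a| + |b|$, the Gaussian factor $e^{-a^2}$ makes the $a$-integral harmless, while the heavy tails in the imaginary direction give $\int_{|b| \le R} |b|/(1+|b|)^2\, db \asymp \log(1+R)$, producing the logarithmic factor. On the second region, use $|E \setminus (E+h)| \le |E|$ together with the decay $\int_{|h| > R} \phi(h)\, dA(h) \lesssim 1/R$ (which holds because $\int_{|b|> R/\sqrt{2}} (1+|b|)^{-2}\, db \lesssim 1/R$, and the Gaussian part contributes negligibly); this contributes $|E|/R = \perim E$. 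Summing the two pieces yields the bound $\lesssim \perim E \cdot (1 + \log(1+R)) \asymp \perim E \cdot (1 + \log^+(|E|/\perim E))$, which is exactly \eqref{eq_bv_2}.

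The main obstacle lies in the $K_l$ case, due to the anisotropy of the envelope: Gaussian decay in the real direction versus only quadratic power decay in the imaginary direction. A direction-blind bound such as $|K_l|^2 \lesssim (1+|h|)^{-2}$ would fail to be integrable, so the sharper joint form of Lemma \ref{kboo} is essential. The mild non-integrability of $|b|\cdot (1+|b|)^{-2}$ along the imaginary axis is precisely what forces the logarithmic correction in \eqref{eq_bv_2}, reflecting the weaker (non-Gaussian) localization of boundary kernels relative to the bulk Ginibre kernel.
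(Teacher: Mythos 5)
Your proposal is correct and is essentially the paper's argument: the paper packages your translation estimate as the convolution inequality $\|\1_E*\psi-(\smallint\psi)\1_E\|_{L^1}\le \perim E\cdot\int|z||\psi(z)|\,dA(z)$, applies it to the same envelope from Lemma \ref{kboo}, and likewise truncates at scale $R\asymp |E|/\perim E$ (in the imaginary direction rather than in $|h|$, a cosmetic difference) to get the logarithmic factor. The only other minor deviation is that the paper treats the degenerate case $|E|\lesssim\perim E$ separately, while your choice of cutoff handles it uniformly.
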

\begin{proof}
We shall use the following estimate for regularization with an integrable convolution kernel $\psi: \mathbb{C} \to \mathbb{R}$:
\begin{align}\label{eq_conv}
\|\, \1_E * \psi - (\smallint \psi) \cdot \1_E \,\|_{L^1(dA)}
\leq \perim E \cdot \int_{\C} |z| |\psi(z)|\, dA(z).
\end{align}
A proof of \eqref{eq_conv} under the assumption that $\smallint \psi =1$ can be found, e.g., in \cite[Lemma 3.2]{AGR}, while the general case follows by homogeneity.

To prove \eqref{eq_bv_1} we take
$\psi(z) := e^{\,-|z|^{\,2}}$
and $C_1 := \int |z| \psi(z)\, dA(z)$. Then \eqref{Gdecay} and \eqref{eq_conv} give
\begin{align*}
&\int_E \int_{\C \setminus E} |G(z,w)|^{\,2} \,dA(z)\, dA(w)
= \int_{\C \setminus E} \Big[\int_E \psi(z-w)\, dA(w) - (\smallint \psi) \cdot
\1_E(z)\Big]\,
dA(z)
\\
&\qquad\leq \|\, \1_E * \psi - (\smallint \psi) \cdot \1_E \,\|_{1} \leq C_1 \cdot \perim E.
\end{align*}

For \eqref{eq_bv_2}, we
select $R \geq 1$ and set
$$\psi(z):= \frac {e^{\,-|\re(z)|^{\,2}}} {(1+|\im(z)|)^{\,2}},\qquad  \psi_R(z): = \psi(z) \cdot \1_{|\im(z)|\leq R}(z).$$ By Lemma \ref{kboo}
\begin{align*}
\int_E \int_{\C \setminus E} |K_l(z,w)|^{\,2} \, dA(z)\, dA(w)
& \lesssim \int_E \int_{\C \setminus E} \psi_R(z-w) \, dA(z) \, dA(w)\\
&+
\int_E \int_{\C \setminus E} (\psi-\psi_R) (z-w) \, dA(z)\,  dA(w).
\end{align*}
The first term is bounded, as before, by
\begin{align}\label{eq_a1}
\perim E \cdot \int_{\C}
|z| \psi_R(z)\, dA(z) \lesssim \log(1+R) \cdot \perim E.
\end{align}
Another elementary estimate shows that
\begin{align}\label{eq_a2}
\int_E \int_{\C \setminus E} (\psi-\psi_R) (z-w)\, dA(z)\, dA(w) \leq |E| \cdot \int_{|\im z|>R} \psi(z)\, dA(z)
\lesssim \frac  {|E|}{1+R}.
\end{align}
If $|E| \geq 2 \cdot \perim E$, choosing $R+1 = \frac{|E|}{\perim E} \geq 2$ and adding \eqref{eq_a1} and \eqref{eq_a2} yields \eqref{eq_bv_2}. On the other hand, if $|E| \leq 2 \cdot
\perim E$, then \eqref{eq_bv_2} is trivially true, since
\begin{align*}
\int_E \int_{\C \setminus E} |K_l(z,w)|^{\,2}\, dA(z)\, dA(w)
\lesssim
(\smallint \psi) \cdot |E| \lesssim \perim E.
\end{align*}
This completes the proof.
\end{proof}
We remark that the estimate \eqref{eq_bv_1} is sometimes called an ``area law''. (Compare e.g.~\cite[Theorem 1.2]{CE}.)

\section{Sampling and Interpolation} \label{sec_samp}
We now state and prove the main result on random sampling and interpolation with Coulomb systems.
Throughout this section, we assume that our external potential satisfies assumptions \ref{Q1}-\ref{Q7} and, in addition, that we are in the low temperature regime
$$\beta_n\ge c\log n$$
for some fixed $c>0$. As usual, $\family=(\family_n)_n$ denotes a random sample from the corresponding Boltzmann-Gibbs distribution, and we write $\family_n=\{\zeta_j\}_1^n$.

\begin{thm}\label{th_samp}
Fix a failure probability $\delta \in (0,1)$ and a bandwidth margin $\gamma>0$. Then there are positive constants $\newA=\newA(c)$, $M=M(c)$ and $s=s(c)$ (independent of $\delta$ and $\gamma$)
and $n_0=n_0(c,\delta, \gamma)$
such that, with probability at least $1-\delta$, the following properties hold simultaneously for all $n \geq n_0$,
	
	\noindent $\bullet$ \emph{(Width)}:
	\begin{align}\label{eq_s_pointsindroplet}
	\{\zeta_k\}_1^n \subset S_M = S + D(0,M/\sqrt{n}),
	\end{align}
	
	\noindent $\bullet$ \emph{(Separation)}:  $\conf$ is $2 s$-separated, i.e.
	\begin{align}\label{eq_separated}
	\sepn_n(\conf)= \sqrt{n}\cdot \min\big\{\,|\zetaj-\zetak|\,;\, j\ne k\,\big\} \geq 2s.
	\end{align}

	\noindent $\bullet$ \emph{(Interpolation)}: For each
	$\rho \geq 1+\gamma$ and each sequence of values $(a_j)_{j=1}^n\in\C^n$ there exists an element $f\in\calW_{n\rho}$ such that
	\begin{equation}\label{inter0}
	f(\zeta_{j})=\seq_{j},\qquad j=1,\ldots,n
	\end{equation}
	and
	\begin{equation}\label{inter}
	\int_\C|f|^{\,2}\le \frac{\newA}{n (\rho-1)^{\,2}} \sum_{j=1}^n|\seq_j|^{\,2}.
	\end{equation}
	
	\noindent $\bullet$ \emph{(Sampling)}: For each $0<\rho \leq 1-\gamma$, the following \emph{Marcinkiewicz-Zygmund} inequality holds
	\begin{equation}\label{mz}
	\int_{S_{2M}}|f|^{\,2}\le \frac{\newA}{n (1-\rho)^{\,2}} \sum_{j=1}^n|f(\zeta_{j})|^{\,2},\qquad f\in\calW_{n\rho}.
	\end{equation}
\end{thm}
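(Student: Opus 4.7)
The width property \eqref{eq_s_pointsindroplet} and the separation property \eqref{eq_separated} are immediate consequences of the localization statement \eqref{local0} and of Theorem \ref{mth}, respectively. By the Borel--Cantelli interpretation \eqref{our}, each holds for all $n \geq n_0(\delta, c)$ with probability at least $1-\delta/3$, provided $M=M(c)$ and $s=s(c)$ are chosen suitably. I would condition throughout the remainder of the argument on this high-probability event. On this event, by the moment argument that proves Theorem \ref{mth} (see \eqref{enoch}--\eqref{nub}), the Lagrange polynomials $\ell_j$ of \eqref{nota} also satisfy $\fint_{D(\zeta_j, 2/\sqrt{n})} |\ell_j|^{\,2\beta_n} \lesssim 1$, whence by Jensen's inequality $\int_{D(\zeta_j, 2/\sqrt{n})} |\ell_j|^{\,2} \lesssim 1/n$.

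For the interpolation inequality (the case $\rho \geq 1+\gamma$), given data $(a_j)$, I would set
$$f := \sum_{j=1}^{n} a_j\, \ell_j\, k_j,$$
where $k_j$ is the normalized reproducing kernel at $\zeta_j$ of $\calW_{n(\rho-1)}$ viewed as a subspace of $L^2$, i.e., $k_j = \sk_{n(\rho-1)}(\cdot,\zeta_j)/\sk_{n(\rho-1)}(\zeta_j,\zeta_j)$. Matching degrees and weights places $\ell_j k_j \in \calW_{n\rho}$, and the conditions $f(\zeta_k)=a_k$ hold since $\ell_j(\zeta_k)=\delta_{jk}$ and $k_j(\zeta_j)=1$. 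To bound $\|f\|_{L^2}^{\,2}$ I would estimate the Gram matrix of $(\ell_j k_j)_j$: the diagonal $\|\ell_j k_j\|_{L^2}^{\,2}$ is controlled by combining $\|k_j\|_{L^2}^{\,2} = 1/\sk_{n(\rho-1)}(\zeta_j,\zeta_j) \lesssim 1/(n(\rho-1))$ (from the lower bound in Theorem \ref{lubb}\ref{harder}, applied to the potential $(\rho-1)Q$) with the local $L^2$-bound on $|\ell_j|$ above; the off-diagonal entries decay via near-Gaussian localization of $k_j$ at scale $1/\sqrt{n(\rho-1)}$ (the bulk scaling limit of Theorem \ref{prop_univ}\ref{aa}, together with the boundary decay of Lemma \ref{kboo}) combined with the separation \eqref{eq_separated}. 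A Schur-type argument then yields $\|f\|_{L^2}^{\,2} \lesssim \frac{\newA}{n(\rho-1)^{\,2}} \sum |a_j|^{\,2}$.

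For the sampling inequality (the case $\rho \leq 1-\gamma$) I would pass to the operator-theoretic dual. Letting $\sk_{n\rho}$ denote the reproducing kernel of $\calW_{n\rho}$ in $L^2$, define the concentration operator
$$T_n f := \frac{1}{n}\sum_{j=1}^{n} f(\zeta_j)\, \sk_{n\rho}(\cdot,\zeta_j),\qquad f\in\calW_{n\rho},$$
so that $\langle T_n f, f\rangle = n^{-1}\sum_j |f(\zeta_j)|^{\,2}$. The inequality \eqref{mz} becomes the operator lower bound $\langle T_n f, f\rangle \gtrsim (1-\rho)^{\,2} \int_{S_{2M}}|f|^{\,2}$ on $\calW_{n\rho}$. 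I would compare $T_n$ with its deterministic analogue $T f(\eta) := \int f(\zeta)\, \sk_{n\rho}(\eta,\zeta)\, d\sigma(\zeta)$ with $d\sigma = \Delta Q\cdot \mathbf{1}_S\, dA$; the Johansson convergence \eqref{joh00}, the separation of the $\zeta_j$, and the pointwise estimate Lemma \ref{weh} together force $\|T_n-T\|_{\calW_{n\rho}\to\calW_{n\rho}} \to 0$. The continuous $T$ acts as a smoothed multiplication by $\mathbf{1}_S$, and the scaling-limit identification (Theorem \ref{prop_univ}) provides the required spectral lower bound on $\calW_{n\rho}$ restricted to $S_{2M}$.

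The main obstacle, in my view, is the sampling direction: converting the qualitative ``bandwidth margin'' $\rho<1$ into the sharp quantitative $(1-\rho)^{\,2}$-gap requires uniform asymptotics of $\sk_{n\rho}$ --- in the bulk (Ginibre) and along the boundary of the droplet of $\rho Q$ (erfc kernel) --- at scales that interpolate between macroscopic and microscopic as $\rho\to 1^{-}$. In the boundary regime this also forces us to absorb the logarithmic overhead from Lemma \ref{kboo} and Lemma \ref{lemma_perims}. The randomness is by comparison straightforward: the almost-sure control reduces everywhere, via Chebyshev and Borel--Cantelli, to moment identities in the spirit of Lemma \ref{exid}, with summability in $n$ guaranteed by the low-temperature hypothesis $\beta_n \geq c\log n$ exactly as in the proof of Theorem \ref{mth}.
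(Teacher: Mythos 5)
Your treatment of (Width) and (Separation) matches the paper, but both quantitative parts have genuine gaps, and in both cases the missing ingredient is the same: a \emph{global} control of the Lagrange polynomials. In your interpolation step you only carry the local bound $\int_{D(\zeta_j,2/\sqrt n)}|\ell_j|^2\lesssim 1/n$ from \eqref{enoch}--\eqref{nub}; this says nothing about $|\ell_j|$ away from $\zeta_j$, so the Gram-matrix diagonal $\|\ell_j k_j\|_{L^2}^2=\int|\ell_j|^2|k_j|^2$ (let alone the off-diagonal entries) cannot be estimated: the mass of $k_j$ lives on a region of scale $1/\sqrt{n(\rho-1)}$ and beyond, where $\ell_j$ is a priori uncontrolled. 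The paper's Step 1 is exactly the cure: the moment identity of Lemma \ref{exid} is applied to $X_j=\int_U|\ell_j|^{2\beta}$ over a \emph{full neighbourhood} $U$ of $S$, and combined with Lemma \ref{weh}, the maximum principle (Lemma \ref{lem_maxp}) and $\beta_n\ge c\log n$ to produce the uniform bound $\max_j\|\ell_j\|_\infty\le A$ \eqref{fin}; this is the heart of the theorem and it is absent from your outline. A second problem is that your Schur-test bound needs finite-$n$ off-diagonal decay of $\bfK_{n\eps}(\zeta_j,\zeta_k)$, uniformly in position up to the boundary; Theorem \ref{prop_univ} and Lemma \ref{kboo} only concern the \emph{limiting} kernels (locally uniform scaling limits), so this is not available from the cited results. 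The paper sidesteps this entirely: it multiplies $\ell_j$ by the \emph{square} of the normalized kernel, so the localizer is the Berezin kernel (a probability density), and then only the diagonal bounds of Theorem \ref{lubb}, the reproducing identity, separation via Corollary \ref{v2}, and Riesz--Thorin interpolation between $\ell^1\to L^1$ and $\ell^\infty\to L^\infty$ are needed --- no off-diagonal kernel asymptotics at all.

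The sampling step is where your argument breaks most seriously. The claim $\|T_n-T\|_{\calW_{n\rho}\to\calW_{n\rho}}\to 0$ cannot follow from Johansson's theorem \eqref{joh00}: that is weak convergence of the \emph{expected} empirical intensity, and it gives no quantitative control of a fixed realization $\tfrac1n\sum_j\delta_{\zeta_j}$ tested against densities $|f|^2$ with $f\in\calW_{n\rho}$, which oscillate at scale $1/\sqrt n$; moreover you would need a rate beating the spectral gap uniformly for all $\rho\le 1-\gamma$. Separation plus width do not exclude macroscopic holes, for which \eqref{mz} is false, so some low-temperature, realization-wise density input is indispensable here --- yet none appears in your sketch (in effect you are assuming a local equidistribution statement of the kind the sampling inequality is later used to prove). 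In the paper this input is again the uniform bound \eqref{fin}: for $\zeta\in S_{2M}$ one writes the exact Lagrange reproduction $f(\zeta)=\sum_j f(\zeta_j)\tilde L_j(\zeta)$ with $\tilde L_j=\bigl(\bfK_{n\eps}(\zeta_j,\cdot)/\bfK_{n\eps}(\cdot,\cdot)\bigr)^2\ell_j$, and concludes with the same $L^1$/$L^\infty$ bounds and Riesz--Thorin. So while your dual, operator-comparison idea is a genuinely different route, as written it rests on an unproved (and, from the tools at hand, unprovable) operator-norm convergence, and it never uses the hypothesis $\beta_n\ge c\log n$ where it is actually needed.
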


\begin{rem} To avoid some uninteresting technicalities, we assume throughout that $\rho$ in (Interpolation) and (Sampling) is such that $n\rho$ is an integer. This is easiest to achieve
by allowing $\rho=\rho_n$ to depend slightly on $n$.
\end{rem}

\begin{rem}
The usual intuition (going back to Landau) is that the interpolation property implies that a family is ``sparse'', while the sampling property implies that it is ``dense''. The localization near the droplet
accounts for the fact that the $L^2$-norm in \eqref{mz} is only taken over the vicinity $S_{2M}$ of the droplet. To wit, the density of the Coulomb gas is very small
outside $S_{M}$ if $M$ is large, which is reflected by the fact that the value of our constant $\newA$ in \eqref{mz} satisfies $\newA\lr\infty$ as $M\lr\infty$.
This technical obstacle does not occur for the interpolation inequality \eqref{inter}, since the sparseness outside $S_{M}$ is (almost surely) immediate for large $M$, in view of the localization property \eqref{local0}.
\end{rem}

\begin{proof}[Proof of Theorem \ref{th_samp}]
\mbox{} \\
\noindent \emph{Step 1. (Preparations).}
Fix some bounded neighborhood $U$ of the droplet and consider the random variables
$$X_j=\int_U |\ell_j|^{\,2\beta}\, dA$$
where $\ell_j\in\calW_n$ is the weighted Lagrange polynomial associated with $\{\zeta_k\}_1^n$ as in \eqref{nota}.

Taking $f(\zeta,\zeta_j)=\1_U(\zeta_j)$ in Lemma \ref{exid} we have
$\bfE_n^{\,\beta}(X_j)=|U|$, whence
$$\bfE_n^{\,\beta}\left(\,\sum_{j=1}^n X_j\,\right)=C_1n,\qquad (C_1=|U|).$$
Using Chebyshev's inequality we obtain
\begin{align}\label{eq_aaa}
\bfP_n^{\,\beta}\left(\left\{\,\sum_1^n X_j>\lambda\,\right\}\right)\le C_1\frac n \lambda.
\end{align}

Now recall, by Lemma \ref{weh}, that for some constant $C_2$ we have the inequality
$$\|\,f\,\|_{L^\infty(S)}^{\,2\beta}\le C_2^{\,2\beta} n\cdot \|\,f\,\|_{L^{2\beta}(U)}^{\,2\beta},\qquad (f\in\calW_n).$$
Hence, by Lemma \ref{lem_maxp}, $$\|\,f\,\|_\infty^{\,2\beta}\le C_2^{\,2\beta} n\cdot \|\,f\,\|_{L^{2\beta}(U)}^{\,2\beta},\qquad (f\in\calW_n).$$
Applying this to $f=\ell_j$ and summing in $j$ we obtain
$$
\sum_{j=1}^n
\|\,\ell_j\,\|_\infty^{\,2\beta}\le C_2^{\,2\beta} n
\sum_{j=1}^n X_j.$$
Hence, by \eqref{eq_aaa}
$$\bfP_n^{\,\beta}\left(\left\{\,\sum_{j=1}^n\|\,\ell_j\,\|_\infty^{\,2\beta}>C_2^{\,2\beta} n\lambda\,\right\}\right)\le C_1\frac n\lambda.$$
Choosing $\lambda=n^{\,3}$ we find that
$$\bfP_n^{\,\beta}\left(\left\{\,\max_{1\le j\le n}\|\,\ell_j\,\|_\infty>C_2 n^{\,\frac 2\beta}\,\right\}\right)
\le \frac{C_1}{n^{\,2}}.$$
Since $\beta\ge c\log n$,
\begin{align*}
n^{\,\frac{2}{\beta}} \le e^{\,\frac{2}{c}},
\end{align*}
and we conclude that
there is a constant
$A=A(c)$ such that for all $n$
\begin{equation}\label{fina}\bfP_n^{\,\beta}\left(\left\{\,\max_{1\le j\le n}\|\,\ell_j\,\|_\infty> \newM\,\right\}\right)< \frac {C_1} {n^{\,2}}.\end{equation}

Let us fix a small failure probability $\delta>0$. By \eqref{fina}
and the Borel-Cantelli lemma, there exists $n_0=n_0(\delta)$ such that
with probability at least $1-\delta/2$ we have, \emph{for all} $n \geq n_0$,
\begin{equation}\label{fin}
\max_{1\le j\le n}\|\,\ell_j\,\|_\infty \leq \newM.
\end{equation}

By Theorem \ref{mth} and \eqref{local0}, $n_0$ can be chosen so that, in addition, with probability at least $1-\delta/2$ the properties (width) and (separation) are satisfied for adequate constants, so that all three properties (width), (separation) and \eqref{fin}
hold
with probability at least $1-\delta$ when $n\ge n_0$.
By suitably enlarging $n_0$ (depending on $\gamma$), we further assume that
\begin{align}\label{eq_n0}
n_0 \geq \tfrac{4}{\gamma}.
\end{align}

Fix $n \geq n_0$ and a configuration $\conf$
for which (width), (separation) and \eqref{fin}
hold. Let us verify the remaining properties (interpolation) and (sampling).

\smallskip

\noindent \emph{Step 2. (The Coulomb gas as an interpolating family)}.
Let us choose a number $\rho \geq 1+\gamma$ and write $\rho=1+2\eps$. We may assume that $n\eps$ is an integer.
By \eqref{eq_n0}, $n_0 \geq \tfrac{2}{\eps}$. Take $n\ge n_0$ and let $\bfK_{n\eps}$ be the reproducing kernel
of the space $\calW_{n\eps}$ (equipped with the norm of $L^2$).

We form new weighted polynomials $L_j\in \calW_{n\rho}$ by multiplying with a localizing factor as follows,
$$L_j(\zeta)=\left(\frac {\bfK_{n\eps}(\zeta,\zetaj)}{\bfK_{n\eps}(\zetaj,\zetaj)}\right)^{\, 2}\cdot \ell_j(\zeta).$$

In view of Theorem \ref{lubb}, \ref{harder} and the assumption $\{\zeta_j\}_1^n\subset S_M$ there is a constant $c_1>0$ (independent of $\eps$) such that
\begin{equation}\label{ln3}\bfK_{n\eps}(\zeta_j,\zeta_j)\ge c_1n\eps,\qquad j=1,\ldots,n.\end{equation}
Likewise by Theorem \ref{lubb}, \ref{trivial} there is a uniform upper bound
\begin{equation}\label{ln4}\bfK_{n\eps}(\zeta,\zeta)\le c_2n\eps,\qquad  (\zeta\in\C).\end{equation}

Now recall the Berezin kernel $\bfB_{n\eps}(\zeta_j,\zeta)$,
$$\bfB_{n\eps}(\zeta_j,\zeta)=\frac {|\bfK_{n\eps}(\zeta_j,\zeta)|^{\,2}}{\bfK_{n\eps}(\zeta_j,\zeta_j)}.$$
This is a probability density in $\zeta$, i.e., $\int_\C \bfB_{n\eps}(\zeta_j,\zeta)\, dA(\zeta)=1$, and we have by \eqref{fin}, \eqref{ln3}
\begin{equation}\label{basset}|L_j(\zeta)|=\frac {\bfB_{n\eps}(\zeta_j,\zeta)}{\bfK_{n\eps}(\zeta_j,\zeta_j)}|\ell_j(\zeta)|\le \frac A {c_1n\eps} \bfB_{n\eps}(\zeta_j,\zeta),\end{equation}
and
hence
\begin{equation}\label{vomb}\|\,L_j\,\|_1\le\frac C {n\eps}\end{equation}
where $C$ is independent of $n$, $j$, and $\eps$.

Next write $\bfK_{n\eps,\zeta}\in\calW_{n\eps}$ for the reproducing kernel
$$\bfK_{n\eps,\zeta}(\eta)=\bfK_{n\eps}(\eta,\zeta).$$

Using in turn: \eqref{basset} and the lower bound \eqref{ln3}, the uniform separation \eqref{eq_separated} and Corollary \ref{v2}, the reproducing property, and the upper bound \eqref{ln4},
we find for all $\zeta\in\C$
\begin{align}\label{nu}
\begin{aligned}
\sum_{j=1}^n|L_j(\zeta)|&\le \frac {C_1}{\eps^{\,2} n^{\,2}}\sum_{j=1}^n|\bfK_{n\eps,\zeta}(\zetaj)|^{\,2}
\\
&\le \frac{C_2}{\eps^{\,2} n}\int_{\C}|\bfK_{n\eps,\zeta}(\eta)|^{\,2}\, dA(\eta)\\
&=\frac {C_2}{\eps^{\,2} n}\bfK_{n\eps}(\zeta,\zeta)\le \frac{C}{\eps}.
\end{aligned}
\end{align}

Now define a linear operator $T:\C^n\to (L^1+L^\infty)(\C)$ by
$$T(\seq)=\sum_{j=1}^n \seq_j L_j,\qquad \seq=(a_j)_1^n \in \C^n.$$
Then by \eqref{vomb} and \eqref{nu},
\begin{equation}\label{end1}\|\,T\,\|_{\ell^1_n\to L^1}\le \max_{1\le j\le n}\|\,L_j\,\|_1\le \frac {C}{\eps n},\end{equation}
while
\begin{equation}\label{end2}\|\,T\,\|_{\ell^\infty_n\to L^\infty}\le
\left\|\,
\sum_{j=1}^n|L_j|
\,\right\|_\infty\le \frac C \eps.\end{equation}
An application of the Riesz-Thorin theorem gives
$$\|\,T\,\|_{\ell^2_n\to L^2}\le\frac C {\eps \sqrt{n}}.$$
If we set $f=T(\seq)$ this means that $f\in \calW_{n\rho}$ satisfies $f(\zetaj)=\seq_j$ for all $j$ and
\begin{equation*}
\int_\C|f|^{\,2}\, dA\le \frac {C^{\,2}} {\eps^{\,2} n}\sum_{j=1}^n|a_j|^{\,2}
\leq
\frac {C_3} {(\rho-1)^{\,2} n}\sum_{j=1}^n|a_j|^{\,2},
\end{equation*}
which proves \eqref{inter}.

\smallskip

\noindent \emph{Step 3. (The Coulomb gas as a Marcinkiewicz-Zygmund family)}.
Let us choose $0 < \rho \leq 1 - \gamma$ and write $\rho=1-2\eps$, where we again may assume that $n\eps$ is an integer.

For fixed $\zeta\in S_{2M}$ and
$f\in \calW_{n\rho}$ we define a weighted polynomial $g_\zeta\in \calW_n$ by
$$g_\zeta(\eta)=f(\eta)\cdot\left(\frac {\bfK_{n\eps}(\eta,\zeta)}{\bfK_{n\eps}(\zeta,\zeta)}\right)^{\,2}.$$

For any element of $\calW_n$ we have the Lagrange interpolation formula
$$g_\zeta(\eta)=\sum_{j=1}g_\zeta(\zetaj)\cdot \ell_j(\eta).$$
Putting $\eta=\zeta$ this gives
\begin{equation}\label{inf0}f(\zeta)=g_\zeta(\zeta)=\sum_{j=1}^n f(\zetaj)\cdot \tilde{L}_j(\zeta),\end{equation}
where
$$\tilde{L}_j(\zeta)=\left(\frac {\bfK_{n\eps}(\zetaj,\zeta)}{\bfK_{n\eps}(\zeta,\zeta)}\right)^{\,2}\cdot\ell_j(\zeta)
.$$
The lower bound in Theorem \ref{lubb} gives that there is a constant $c_1=c_1(M)>0$ such that
$$\bfK_{n\eps}(\zeta,\zeta)\ge c_1n\eps\qquad \text{when}\qquad \zeta\in S_{2M}.$$
Combining this with the estimate in \eqref{fin}, we find that
$$|\tilde{L}_j(\zeta)|\le\frac A {(c_1n\eps)^{\,2}}|\bfK_{n\eps}(\zeta,\zeta_j)|^{\,2},\qquad (\zeta\in S_{2M}).$$
The reproducing property $\int \bfK_{n\eps}(\zeta',\zeta) \, \bfK_{n\eps}(\zeta,\zeta'') \, dA(\zeta) = \bfK_{n\eps}(\zeta',\zeta'')$ thus gives
\begin{align*}\int_{S_{2M}}|\tilde{L}_j(\zeta)|\, dA(\zeta)&\le \frac A {(c_1n\eps)^{\,2}}\int_{S_{2M}}|\bfK_{n\eps}(\zeta,\zeta_j)|^{\,2}\, dA(\zeta)\\
&\le \frac A{(c_1n\eps)^{\,2}}\bfK_{n\eps}(\zeta_j,\zeta_j)\\
&\le \frac {C_1} {n\eps},\end{align*}
where we again used the uniform upper bound \eqref{ln4} to obtain the last inequality.

We now put
$$\tilde{F}(\zeta)=\sum_{j=1}^n|\tilde{L}_j(\zeta)|$$
and observe that (by Corollary \ref{v2} which is applicable due to the uniform separation of $\conf$),
\begin{align*}\tilde{F}(\zeta)&\le\frac {A} {(c_1n\eps)^{\,2}}\sum_{j=1}^n|\bfK_{n\eps,\zeta}(\zetaj)|^{\,2}\\
&\le \frac {C_1 \newM}{c_1^{\,2}\eps^{\,2} n} \int_\C|\bfK_{n\eps,\zeta}|^{\,2}\\
&=\frac {C_1 \newM}{c_1^{\,2}\eps^{\,2} n}\bfK_{n\eps}(\zeta,\zeta)\le \frac {C} \eps,
\end{align*}
by virtue of the upper bound \eqref{ln4}.

Consider the linear operator $\tilde{T}:\C^n\to (L^1+L^\infty)(S_{2M})$ given by
$$\tilde{T}(\seq)=\sum_{j=1}^n \seq_j\tilde{L}_j.$$
The above estimates show that $$\|\,\tilde{T}\,\|_{\ell^1_n\to L^1(S_{2M})}\le \frac{C}{\eps n},\qquad \|\,\tilde{T}\,\|_{\ell^\infty_n\to L^\infty(S_{2M})}\le \frac{C}{\eps},$$
so by the Riesz-Thorin theorem,
$$\|\,\tilde{T}\,\|_{\ell^2_n\to L^2(S_{2M})}\le\frac C {\eps \sqrt{n}}.$$

By \eqref{inf0}, any $f\in\calW_{n\rho}$ can be represented as $f=\tilde{T}(c)$ where $\seq_j=f(\zetaj)$ for $j=1,\ldots,n$. Hence,
\begin{equation*}\int_{S_{2M}}|f|^{\,2}\le \frac {C^{\,2}}{\eps^{\,2} n}\sum_{j=1}^n |f(\zetaj)|^{\,2}\leq \frac{C_3}{(1-\rho)^{\,2} n}\sum_{j=1}^n |f(\zetaj)|^{\,2}
.
\end{equation*}
This, proves \eqref{mz}, thus finishing our proof of Theorem \ref{th_samp}.
\end{proof}

\begin{rem} While our main focus here is on the analysis of random
configurations, the above proof of Theorem \ref{th_samp} also applies to deterministic
configurations and shows that the conclusions hold under suitable
separation and density properties as in Theorem \ref{mth} and Theorem \ref{mth2}, as
these lead to the bounds for Lagrange polynomials in \eqref{fin}. An infinite
dimensional counterpart of such result is found in \cite{BOC}; see also \cite{Se}.
\end{rem}

\section{Equidistribution} \label{sec_equ}

In this section we prove Theorem \ref{mth2} and Proposition \ref{propop}.
As in \cite{AOC,A0} we largely follow Landau's strategy from his work \cite{L}
on interpolation and sampling in Paley-Wiener spaces, but with certain basic modifications due to the localization to the vicinity of the droplet.

Throughout this section, we fix a potential $Q$ which satisfies all the assumptions \ref{Q1}-\ref{Q7}.
We will write $$\langle\, f\,,\,g\,\rangle=\int_\C f\bar{g}\, dA$$ for the inner product in the space $L^2=L^2(\C,dA)$.
We shall regard the space $\calW_n$ of weighted polynomials as a subspace of $L^2$.

\subsection{Concentration operators} Given a domain $\Omega\subset\C$, the
corresponding ``concentration operator'' $T_{n,\Omega}$
is the Toeplitz operator on $\calW_n$ defined by
\begin{align}\label{eq_conop}
T_{n,\Omega} f =
P_{\calW_{n}} (f\cdot \1_{\Omega}), \qquad (f\in\calW_n),
\end{align}
where $P_{\calW_{n}}$ is the orthogonal projection of
$L^2$ onto $\calW_{n}$. Thus $T_{n,\Omega}$ is a (strictly) positive contraction, and we can write its eigenvalues in non-increasing order as
$$1\ge \lambda_1\ge \lambda_2\ge\cdots\ge\lambda_n>0.$$

\begin{lem}\label{lemma_eig} Fix a number $\vt$, $0<\vt<1$. Then
\begin{align*}
\left|\,
\# \left\{\, j\, ; \lambda_j \ge \vt\, \right\} - \trace T_{n,\Omega}\, \right| \leq \max\left\{\tfrac{1}{\vt},\tfrac{1}{1-\vt}\right\} \cdot \big[\trace T_{n,\Omega}-\trace T^{\,2}_{n,\Omega} \big].
\end{align*}
\end{lem}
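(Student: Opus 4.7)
The plan is to work directly with the spectral decomposition of $T_{n,\Omega}$ and to manipulate the eigenvalues $\{\lambda_j\}_{j=1}^n$ arithmetically. The quantities in the statement are spectral invariants: $\trace T_{n,\Omega} = \sum_j \lambda_j$, $\trace T_{n,\Omega}^{\,2} = \sum_j \lambda_j^{\,2}$ (since $T_{n,\Omega}$ is self-adjoint), and hence $\trace T_{n,\Omega}-\trace T_{n,\Omega}^{\,2} = \sum_j \lambda_j(1-\lambda_j)$, which is non-negative because the eigenvalues lie in $(0,1]$ (contractivity of $T_{n,\Omega}$).

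Next I would split the discrepancy by thresholding the eigenvalues at $\vt$. Writing $N_\vt=\#\{j:\lambda_j\ge\vt\}$, we have the identity
\begin{align*}
N_\vt - \trace T_{n,\Omega}
= \sum_{j:\lambda_j\ge\vt}(1-\lambda_j)\;-\;\sum_{j:\lambda_j<\vt}\lambda_j,
\end{align*}
and the two sums on the right are both non-negative. The triangle inequality then gives $|N_\vt-\trace T_{n,\Omega}|\le A+B$ where $A,B$ denote the two sums.

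The core of the argument is to bound $A$ and $B$ in terms of $\sum_j\lambda_j(1-\lambda_j)$. On the set $\{\lambda_j\ge\vt\}$ one has $\lambda_j\ge\vt$, so $1-\lambda_j\le\vt^{-1}\lambda_j(1-\lambda_j)$; on the complementary set $\{\lambda_j<\vt\}$ one has $1-\lambda_j>1-\vt$, so $\lambda_j\le(1-\vt)^{-1}\lambda_j(1-\lambda_j)$. Summing these pointwise inequalities yields
\begin{align*}
A \le \tfrac{1}{\vt}\!\!\sum_{j:\lambda_j\ge\vt}\!\!\lambda_j(1-\lambda_j),\qquad B \le \tfrac{1}{1-\vt}\!\!\sum_{j:\lambda_j<\vt}\!\!\lambda_j(1-\lambda_j).
\end{align*}
Adding these and bounding each prefactor by $\max\{\vt^{-1},(1-\vt)^{-1}\}$ recombines the two partial sums into the full trace $\sum_j\lambda_j(1-\lambda_j)=\trace T_{n,\Omega}-\trace T_{n,\Omega}^{\,2}$, yielding the claimed bound.

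There is no serious obstacle here: the lemma is a purely spectral, elementary fact about positive contractions, and the only decision point is the thresholding argument, which is the standard Landau-type trick used to convert a soft (trace) count of eigenvalues into a hard count of eigenvalues exceeding a fixed level. The concentration-operator structure of $T_{n,\Omega}$ is not used at all in the proof — only self-adjointness and the spectrum lying in $(0,1]$.
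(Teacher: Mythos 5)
Your proof is correct and is essentially the same as the paper's: your identity $N_\vt-\trace T_{n,\Omega}=\sum_{\lambda_j\ge\vt}(1-\lambda_j)-\sum_{\lambda_j<\vt}\lambda_j$ is exactly the paper's $\trace\psi(T_{n,\Omega})$ for the step function $\psi(t)=-t$ on $[0,\vt)$, $\psi(t)=1-t$ on $[\vt,1]$, and your two case-by-case bounds are precisely the paper's pointwise estimate $|\psi(t)|\le\max\{\tfrac1\vt,\tfrac1{1-\vt}\}(t-t^{\,2})$ summed over the spectrum.
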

\begin{proof}
	Observe that $$\# \left\{\,j\, ;\, \lambda_j \ge \vt\, \right\} - \trace T_{n,\Omega} =
	\trace \psi(T_{n,\Omega}) $$ where
	\begin{align*}
	\psi(t) := \left\{
	\begin{array}{ll}
	-t, & \mbox{if } 0 \leq t < \vt,\\
	1-t, & \mbox{if } \vt \le  t \leq 1.
	\end{array}
	\right.
	\end{align*}
	and use the estimate $|\psi(t)| \leq \max\{\tfrac{1}{\vt},\tfrac{1}{1-\vt}\}
	(t-t^{\,2})$ for $t \in [0,1]$.
\end{proof}

In the following, we shall consider blow-ups about (perhaps $n$-dependent) points $p_n$. The following lemma will come in handy. (See \cite{AKMW} for related statements, valid near cusps.)

\begin{lem} \label{regular} Let $p_n\in \d S$ be a boundary point and let $e^{\, i\theta_n}$ be the direction of the normal of $\d S$ at the point $p_n$, pointing outwards from $S$.
Fix a parameter $\rho$ with $0<\rho<2$ and let $\magn_{n\rho}$ be the corresponding magnification map:
\begin{equation}\magn_{n\rho}:\label{magn}\zeta\longmapsto z,\qquad z=\magn_{n\rho}(\zeta)=\sqrt{n\rho\Delta Q(p_n)}\cdot e^{\,-i\theta_n}\cdot (\zeta-p_n).\end{equation}
Then for each (large) $C>0$, the indicator function $\1_{\magn_{n\rho}(S)\cap D(0,C)}$ converges to $\1_{\L\cap D(0,C)}$ in the norm of $L^1$, where $\L$ is the left half-plane,
$$\L=\{\, z\,;\,\re z\le 0\}.$$
\end{lem}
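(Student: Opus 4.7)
The plan is to exploit smoothness and compactness of $\d S$ to locally approximate the droplet by a half-plane in the magnified coordinates, uniformly in $p_n$. Since $Q$ satisfies \ref{Q1}-\ref{Q7}, the boundary $\d S$ is a finite disjoint union of analytic Jordan curves, hence compact and of uniformly bounded curvature; moreover, $\Delta Q$ is bounded above and below by positive constants in a neighborhood of $S$, so the scaling factor $r_n:=\sqrt{n\rho\Delta Q(p_n)}$ satisfies $r_n\asymp\sqrt{n}$ uniformly in $p_n\in\d S$.

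The first step is to parametrize $\d S$ locally near $p_n$. In the affine frame where $p_n$ is placed at the origin and $e^{\,i\theta_n}$ points in the positive real direction, the component of $\d S$ passing through $p_n$ can, for all sufficiently large $n$, be written as the graph of a $C^2$ (in fact real-analytic) function $x=g_n(y)$, with $g_n(0)=g_n'(0)=0$, and $|g_n''|\le \kappa$ on a neighborhood of $0$, where $\kappa$ depends only on the (uniformly bounded) curvature of $\d S$. The local side of $\d S$ corresponding to $S$ is the half $\{x<g_n(y)\}$, since $e^{\,i\theta_n}$ is the \emph{outward} normal. Applying the magnification $\magn_{n\rho}$ transforms this component into the graph $x=G_n(y):=r_n\,g_n(y/r_n)$; from $|g_n(t)|\le \kappa t^{\,2}/2$ we obtain the estimate
\[
|G_n(y)|\le \frac{\kappa\, y^{\,2}}{2\,r_n}\le \frac{\kappa\, C^{\,2}}{2\,r_n},\qquad |y|\le C,
\]
which tends to $0$ uniformly in $|y|\le C$ and in $p_n\in\d S$.

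The second step is to dispose of other boundary components. Because $\d S$ has finitely many components and $S$ is bounded, the distance between any two of them is bounded below by some $d_0>0$. Therefore, for $n$ large enough that $C/r_n<d_0/2$, the disc $D(p_n,C/r_n)$ in $\zeta$-coordinates meets at most the one component of $\d S$ through $p_n$; so within $D(0,C)$ the set $\magn_{n\rho}(S)$ equals $\{(x,y)\in D(0,C):x<G_n(y)\}$. Combined with the uniform convergence $G_n\to 0$, this yields
\[
|\,\1_{\magn_{n\rho}(S)\cap D(0,C)}-\1_{\L\cap D(0,C)}\,|\le \1_{E_n},\qquad
E_n:=\{(x,y)\in D(0,C):|x|\le |G_n(y)|\},
\]
and $|E_n|\lesssim C\cdot \sup_{|y|\le C}|G_n(y)|\to 0$, giving the desired $L^1$-convergence.

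The only real obstacle is making the graph representation and its estimates uniform in the moving basepoint $p_n$, but this is immediate from the compactness of $\d S$ and the uniform $C^2$ bound it provides on arclength parametrizations of the finitely many analytic components. I would not expect any additional technicalities, since assumption \ref{Q3} (together with \ref{Q5}-\ref{Q6} via Sakai's theorem) rules out singular boundary points such as cusps, which are precisely the cases that would require a more delicate analysis.
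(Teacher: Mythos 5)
Your proof is correct and follows essentially the same route as the paper: represent $\d S$ locally as a graph over the tangent line at $p_n$ (available by Sakai regularity), rescale by $\magn_{n\rho}$, and note the rescaled graph is $O(y^{2}/\sqrt{n})$, so the symmetric difference with the half-plane inside $D(0,C)$ has vanishing area. Your treatment is in fact slightly more careful than the paper's, since you make the uniformity in the moving basepoint $p_n$ and the exclusion of other boundary components explicit, whereas the paper works with a power-series graph expansion and leaves these points implicit.
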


\begin{proof} Recall that our assumptions on $Q$ imply (via Sakai's theory) that the boundary $\d S$ is everywhere real-analytic.

We may assume that $p_n=0$ and $\theta_n=0$, i.e., the boundary $\d S$ is tangential to the imaginary axis at $0$.
There is then an $\epsilon>0$ such that
the portion of $\d S$ inside $D(0,\epsilon)$ is given by a graph
$$u=c_2v^2+c_3v^3+\cdots,\qquad \zeta=u+iv\in (\d S)\cap D(0,\epsilon).$$
Writing $z$ in \eqref{magn} as $z=x+iy$, we see that the image of the curve $(\d S)\cap D(0,\epsilon)$ is
$$x=c_2'n^{-1/2}y^2+c_3'n^{-1}y^3+\cdots$$
for suitable coefficients $c_2',c_3',\cdots$. Now fix a large $C>0$ and consider the set
$$S_{n,C}=D(0,C)\cap \magn_{n\rho}(S)=\left\{\, z\in D(0,C)\, ;\, x\le c_2'n^{-1/2}y^2+c_3'n^{-1}y^3+\cdots\,\right\}.$$
It is clear that $\1_{S_{n,C}}$ converges to $\1_{D(0,C)\cap\{x\le 0\}}$ in the norm of $L^1$.
\end{proof}

We will also need an asymptotic description of the quantities in Lemma \ref{lemma_eig}. The following Lemma is
essentially found in \cite[Lemmas 4.1 and 4.2]{A0}, but we shall supply some extra details about the proof.

\begin{lem}\label{lemma_traces} Fix a sequence $\bfp=(p_n)_1^\infty$ which belongs to $S_M=S+D(0,M/\sqrt{n})$ for some $M>0$.
Assume that the limit $p_*=\lim p_n$ exists (along some subsequence).
Also fix numbers $L\ge 2$ and $\rho$ with $0<\rho<2$ and consider the concentration operator $T=T(\rho, n,p_n,L,M)$ defined by
$$T=T_{\rho n,\Omega}:\calW_{\rho n}\to\calW_{\rho n},\qquad Tf=P_{\calW_{n\rho}}(f\cdot\1_\Omega),\qquad \Omega=D(p_n,L/\sqrt{n})\cap S_M.$$
Then (along a further subsequence)
\begin{align*}
\lim_{n \lr \infty}
\trace T
= \begin{cases}
\rho \cdot \Delta Q (p_*) \cdot L^{\,2} &\mbox{bulk case} \\
\tfrac{\rho}{2} \cdot \Delta Q (p_*) \cdot L^{\,2} + O(L) &\mbox{boundary case}
\end{cases}
\end{align*}
and
\begin{align*}
\lim_{n \lr \infty}
\trace \left(
T - T^{\,2}
\right)
=
\begin{cases}
O(L) &\mbox{bulk case} \\
O(L \log L) &\mbox{boundary case}
\end{cases}
\end{align*}
where the implied constants depend only on $Q$ and $M$.
\end{lem}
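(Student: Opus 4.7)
The plan is to convert the traces into integrals of the canonical correlation kernel $\bfK_{n\rho}$ and then apply the universality results (Theorem \ref{prop_univ} and Lemma \ref{regular}) together with the perimeter estimates of Lemma \ref{lemma_perims}. Standard reproducing-kernel identities give
\[
\trace T=\int_\Omega \bfR_{n\rho}(\zeta)\,dA(\zeta),\qquad \trace(T-T^{\,2})=\int_\Omega\int_{\C\setminus\Omega}|\bfK_{n\rho}(\zeta,\eta)|^{\,2}\,dA(\zeta)\,dA(\eta).
\]
A change of variables via the magnification $\magn_{n\rho}$ of \eqref{rescaling} and the scaling identity \eqref{recc} turn these into integrals over $E_n:=\magn_{n\rho}(\Omega)$ of $R_{n\rho}$ and, respectively, $|K_{n\rho}|^{\,2}$. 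Setting $r_*:=L\sqrt{\rho\Delta Q(p_*)}$, Lemma \ref{regular} together with the smoothness of $\d S$ shows that $E_n$ converges, in $L^1$-sense on compacts, to the full disk $D(0,r_*)$ in the bulk regime, and to the half-disk $D(0,r_*)\cap\L$ up to an $O(L)$-area strip coming from $S_M\setminus S$ in the boundary regime.

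For $\trace T$, pointwise convergence of $R_{n\rho}$ supplied by Theorem \ref{prop_univ} (to $1$ in the bulk, and, for suitable $\theta_n$ and a subsequence along which \eqref{limo} holds, to $F(z+\bar z+2l)$ on the boundary), together with the uniform bound $R_{n\rho}\leq C$ from Theorem \ref{lubb}\ref{trivial}, then gives by dominated convergence
\[
\trace T\longrightarrow \begin{cases} r_*^{\,2} & \text{(bulk)},\\ \int_{D(0,r_*)\cap\L}F(2\re z+2l)\,dA(z) & \text{(boundary)}.\end{cases}
\]
In the bulk this equals $\rho\Delta Q(p_*)L^{\,2}$, while on the boundary the identity $F(y)+F(-y)=1$ and the superexponential decay of $F$ at $+\infty$ show that the boundary integral equals $r_*^{\,2}/2+O(L)=\tfrac{\rho}{2}\Delta Q(p_*)L^{\,2}+O(L)$, the $O(L)$ also absorbing the deviation of $E_n$ from the half-disk.

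For $\trace(T-T^{\,2})$, I would split the outer integral as $\int_{S_{2M}\setminus\Omega}+\int_{\C\setminus S_{2M}}$. The far piece is bounded via Cauchy--Schwarz $|\bfK_{n\rho}(\zeta,\eta)|^{\,2}\le\bfR_{n\rho}(\zeta)\bfR_{n\rho}(\eta)$ combined with the exponential off-spectrum decay of $\bfR_{n\rho}$ underlying \eqref{local0}, and is thus negligible for $M$ large. For the main piece, rescaling reduces matters to $\int_{E_n}\int_{F_n\setminus E_n}|K_{n\rho}(z,w)|^{\,2}\,dA(z)\,dA(w)$ with $F_n:=\magn_{n\rho}(S_{2M})$. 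Invoking Lemma \ref{lemma_perims} with $E=E_n$, and noting $\perim E_n\asymp L$, $|E_n|\asymp L^{\,2}$, yields $O(L)$ in the bulk via the Gaussian decay \eqref{Gdecay} and $O(L\log L)$ on the boundary via the slower decay of Lemma \ref{kboo}. The main obstacle is that Theorem \ref{prop_univ} only supplies pointwise convergence on compacts of the rescaled kernel, whereas the perimeter estimate is needed for $K_{n\rho}$ itself. I would resolve this by upgrading the off-diagonal estimates \eqref{Gdecay} and Lemma \ref{kboo} to hold \emph{uniformly in $n$} for $K_{n\rho}$; such bounds can be extracted in the manner of \cite{AKM,AKMW} from the $\dbar$-characterization of the reproducing kernel combined with the a priori diagonal bound $R_{n\rho}\leq C$ of Theorem \ref{lubb}\ref{trivial}.
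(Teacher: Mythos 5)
Your treatment of $\trace T$ follows the paper's route essentially verbatim: rewrite the trace as $\int_\Omega\bfK_{n\rho}(\zeta,\zeta)\,dA$, rescale, use Lemma \ref{regular} for the $L^1$-convergence of the indicators and Theorem \ref{prop_univ} (with the uniform bound of Theorem \ref{lubb}\ref{trivial}) to pass to the limit, then compute the bulk and erfc integrals. That part is fine.

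The gap is in your treatment of $\trace(T-T^{\,2})$. You convert it at finite $n$ into $\int_\Omega\int_{\C\setminus\Omega}|\bfK_{n\rho}|^{\,2}$ and then want to apply the perimeter estimate of Lemma \ref{lemma_perims} to the rescaled kernel $K_{n\rho}$ itself. This requires off-diagonal decay of $K_{n\rho}$ that is uniform in $n$ and valid over distances comparable to $\sqrt{n}$ (your outer region $F_n\setminus E_n$ has diameter of that order), including the delicate tangential regime at the edge, where the true decay is only of erfc type, i.e.\ like $1/(1+|\im(z-w)|)$ as in Lemma \ref{kboo}; in particular no uniform Gaussian or exponential bound in the tangential direction can hold there. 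Establishing such a uniform finite-$n$ bound is essentially equivalent to boundary universality with uniform error control (the Hedenmalm--Wennman type asymptotics behind Theorem \ref{prop_univ}\ref{bb}), and it does not follow ``in the manner of \cite{AKM,AKMW} from the $\dbar$-characterization plus the diagonal bound $R_{n\rho}\le C$'' — normal-families arguments only give locally uniform subsequential convergence on compacts, with no quantitative off-diagonal control at finite $n$. (A smaller inaccuracy: \eqref{local0} is the localization statement for the $\beta$-ensemble particles, not a decay estimate for the $\beta=1$ one-point function, so it cannot be invoked for your far-field piece as stated.) The paper sidesteps all of this by reversing the order of operations: it takes the limit of $\trace T$ and of $\trace T^{\,2}$ separately, each being an integral of the rescaled kernel over a \emph{compact} blown-up region where locally uniform convergence (Lemma \ref{lemlim}, Theorem \ref{prop_univ}) suffices, and only \emph{after} the limit uses the reproducing (mass-one) identity of the explicit limit kernels $G$ and $K_l$ to rewrite $\int_{E(L)}R-\iint_{E(L)^{\,2}}|K|^{\,2}$ as $\int_{E(L)}\int_{\C\setminus E(L)}|K|^{\,2}$, to which Lemma \ref{lemma_perims} applies directly. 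If you restructure your argument this way, the uniform-in-$n$ decay you were missing is never needed.
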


\begin{proof}
By passing to a suitable subsequence, we can assume that $(p_n)$ is either in the bulk regime or in the boundary regime, and that the limit
\begin{equation}\label{newlimit}l=\lim_{n\to\infty}\sqrt{n}\cdot e^{\,-i\theta_n}\cdot (p_n-q_n)\end{equation}
exists, where $q_n\in \d S$ is the closest point to $p_n$ and $e^{\,i\theta_n}$ is the outwards unit normal to $\d S$ at $q_n$. (Note that $l\le M$ and that the bulk case
corresponds to $l=-\infty$.)

It is easy to see that
\begin{align*}
\trace T
&= \int_{\Omega} \bfK_{\rho n}(\zeta,\zeta)\, dA(\zeta),
\\
\trace T^{\,2}
&= \iint_{\Omega^{\,2}} |\bfK_{\rho n}(\zeta,\eta)|^{\,2}\, dA_2(\zeta,\eta).
\end{align*}
We now zoom on the point $p_n$ using the magnification map $z=\magn_{n\rho}(\zeta)$ from \eqref{magn}, with the following convention about angles $\theta_n$: we put $\theta_n=0$ if
$\bfp$ is in the bulk regime and $e^{\,i\theta_n}$ is the outwards unit normal to $\d S$ at $q_n$. (This is in accordance with the earlier convention in Section \ref{FPD}.)

Similar as in Section \ref{FPD}, we put
$$K_{\rho n}(z,w)=\frac 1 {n\rho\Delta Q(p_n)}\bfK_{n\rho}(\zeta,\eta),\qquad z=\magn_{n\rho}(\zeta),\quad w=\magn_{n\rho}(\eta),$$
and observe that
\begin{align}\label{eq_trace1}
\trace T& =
\int_{\magn_{n\rho}(\Omega)} K_{n\rho}(z,z)\, dA(z),\\
\label{eq_trace2}
\trace T^{\,2}
&= \iint_{\magn_{n\rho}(\Omega)^{\,2}}
| K_{n\rho}(z,w)|^{\,2}\, dA_2(z,w).
\end{align}

Now write $d=\sqrt{\rho\Delta Q(p_*)}$ and let $\L_1$ be the translated half-plane
$$\L_1=\L-l\cdot d+M\cdot d=\left\{\,z\, ;\, \re (z+l\cdot d)\le M\cdot d\,\right\},$$
and set
\begin{align*}
E_1(L)&=D(0,L\cdot d),\\
E_2(L)&=D(0,L\cdot d)\cap(\L_1).
\end{align*}
By Lemma \ref{regular} and an elementary geometric consideration, we see that the characteristic function $\1_{\magn_{\rho n}(\Omega)}$
converges in the $L^1$-sense to $\1_{E_1(L)}$
in the bulk case, and
to $\1_{E_2(L)}$ in the boundary case.

We now use Theorem \ref{prop_univ} (with $n\rho$ in place of $n$), to take the limit on \eqref{eq_trace1}, and obtain
\begin{align*}
\lim_{n\lr\infty} \trace  T = \int_{E(L)} R\, dA
\end{align*}
where $R\equiv 1$ and $E(L)=E_1(L)$ in the bulk case while $R(z)=F(z+\bar{z}+2ld)$ and $E(L)=E_2(L)$ in the boundary case, respectively.
(As always, $F$ denotes the holomorphic $\erfc$-kernel from \eqref{eq_fbf}.)

Thus, in the bulk case we have
$$\lim_{n\lr\infty} \trace T =|E_1(L)|= \rho \cdot \Delta Q(p_*)\cdot  L^{\,2}.$$

Similarly, an easy computation using
asymptotics for the $\erfc$-kernel shows that, in the boundary case,
\begin{align*}
\lim_{n\lr\infty} \trace T = \int_{E_2(L)}  F(z+\bar{z}+2ld) \,dA(z) = \frac{1}{2}  \rho\cdot \Delta Q(p_*)\cdot L^{\,2} + O(L),\qquad (L\to\infty),
\end{align*}
where the implied constant depends on $M$ (and the potential $Q$).

Now let $K=GL$ be a limiting kernel in Lemma \ref{lemlim}, so $K(z,z)=R(z)$ with $R$ as above. (So $K=G$ in the bulk case
and $K=K_l$ in the boundary case.) Then (along the relevant subsequence)
\begin{align*}
\lim_{n\lr\infty} \trace \left(T-T^{\,2}\right)
&= \int_{E(L)} R - \iint_{E(L)^{\,2}} |K(z,w)|^{\,2}\, dA_2(z,w)
\\
&
= 2\int_{E(L)} \int_{\C \setminus E(L)} |K(z,w)|^{\,2}\, dA_2(z,w).
\end{align*}
The desired bounds now follow from Lemma \ref{lemma_perims} on noting that
$$\perim E_1(L) \asymp \perim E_2(L) \lesssim L,\qquad |E_2(L)| \lesssim L\cdot \perim E_2(L)$$
where (for $L\ge 2$) the implied constants depend only on $M$ and $Q$.
\end{proof}

\subsection{Equidistribution and discrepancy} We now prove Theorem \ref{mth2} on equidistribution and Proposition \ref{propop} about discrepancy estimates. While the literature
on density conditions for sampling and interpolation is ample, Landau's original method
seems to adapt best to partial Marcinkiewicz-Zygmund inequalities \eqref{mz}.
In dealing with certain technicalities we also benefited from reading \cite{ACNS,NO,RS95}.

To get started, we fix a sequence $\bfp=(p_n)$ such that each $p_n$ is
contained in $S_M=S+D(0,M/\sqrt{n})$ for some $M>0$.
After passing to a subsequence we can assume that $p_n$ converges to some point $p_*\in S$.
(Recall that the exterior case was already disposed of after the statement of Theorem \ref{mth2}.)

We fix $L>2$, a failure probability $\delta \in (0,1)$, and a bandwidth margin $\gamma \in (0,1)$, and invoke Theorem \ref{th_samp}.
Let $M=M(c)$, $s=s(c)$, $\newA=\newA(c)$, and $n_0=n_0(c,\delta,\gamma)$ be the respective constants. We then select with probability
at least $1-\delta$ a family $\family=(\family_n)_n$ such that the samples
$$\family_n=\{\zeta_j\}_1^n$$
satisfy all conditions in Theorem \ref{th_samp} when $n\ge n_0$.
Below, we fix $n\ge n_0$ and let $\{\zeta_j\}_1^n$ be a configuration satisfying those conditions. (We may also allow $\gamma$ to be slightly
$n$-dependent, so we can assume that $n\gamma$ is an integer).

We may assume without loss of generality that $s<M$ and $s<1/4$, and also $n_0 \gamma \geq 2$. In what follows, all implied constants are allowed to depend on $c$ and $Q$. An unspecified norm $\|\cdot\|$
will always denote the norm in $L^2(\C,dA)$.

To simplify the notation we write
\begin{align*}
&D=D\left(\,p_n, L/\sqrt{n}\,\right), \\
&D^{\,+}=D\left(\,p_n, (L+s)/\sqrt{n}\,\right), \quad D^{\,-}=D\left(\,p_n, (L-s)/\sqrt{n}\,\right),\\
&N_n= \#\left(\left\{\,\{\zeta_j\}_1^n\cap D\,\right\}\right) = \# \left(\left\{\,\{\zeta_j\}_1^n\cap D \cap S_M\right\}\right), \\
&N_n^{\,\pm}=
\# \left(\left\{\,\{\zetaj\}_1^n\cap D^{\,\pm}\,\right\}\right) = \#\left( \left\{\,\{\zeta_j\}_1^n\cap D^{\,\pm} \cap S_M\,\right\}\right),
\end{align*}
where we used \eqref{eq_s_pointsindroplet}. Due to the $2s$-separation, we have
\begin{align}\label{eq_plus}
N_n^{\,-} \leq N_n \leq N_n^{\,+} \leq N_n^{\,-} + CL,
\end{align}
for a constant $C=C(M,s)$.

\noindent {\em Step 1. (Lower density bounds)}. Choose
$$\rho=1-\gamma$$
and consider the concentration operator
\begin{equation}\label{conop}T:\calW_{n\rho}\to\calW_{n\rho},\qquad f\longmapsto P_{\calW_{n\rho}}(f\cdot\1_{D\cap S_{M+s}}).\end{equation}

Let $(\phi_j)_1^{n\rho}$ be an orthonormal basis for $\calW_{n\rho}$ consisting of eigenfunctions of $T$,
$T(\phi_j)=\lambda_j\phi_j,$
where, as before, we use the convention $1\ge \lambda_1\ge \lambda_2\ge\cdots\ge \lambda_n>0$.

Write
$$F_n=\lspan\{\,\phi_1,\ldots,\phi_{N_n^++1}\,\}.$$
We can then find an element $f\in F_n$ with $\|\,f\,\|=1$ which vanishes at each point in $\{\zeta_j\}_1^n\cap D^{\,+}$.

Since $\{\zeta_j\}_1^n$ is $2s$-separated, the MZ inequality \eqref{mz}
and Corollary \ref{v2} imply
\begin{align*}
\int_{S_{M+s}} |f|^{\,2} \leq \frac{\newA}{n(1-\rho)^{\,2}} \sum_{\zetaj \notin D^{\,+}} |f(\zetaj)|^{\,2} \leq \frac{C}{\gamma^{\,2}} \int_{S_{M+s} \setminus D} |f|^{\,2}.
\end{align*}
Hence,
\begin{align*}
\int_{S_{M+s} \cap D} |f|^{\,2}=
\int_{S_{M+s}} |f|^{\,2}- \int_{S_{M+s} \setminus D} |f|^{\,2}
\leq \big(1-\tfrac{\gamma^2}{C} \big) \int_{S_{M+s}} |f|^{\,2} \leq 1-\tfrac{\gamma^2}{C}.
\end{align*}
On the other hand,
\begin{align*}
\lambda_{N_n^++1} \leq \langle\, T f\,,\, f\, \rangle = \int_{S_{M+s} \cap D} |f|^{\,2}.
\end{align*}
Therefore,
\begin{align}\label{eq_therefore_1}
\lambda_{N_n^{\,+}+1} \leq 1-\tfrac{\gamma^{\,2}}{C}.
\end{align}
We may assume that
$C>2$, so that $1-\tfrac{\gamma^{\,2}}{C}>1/2$.
An application of Lemma \ref{lemma_eig} (with $\vt=1-\gamma^2/C$) then gives
\begin{align*}
N_n^{\,+}+1 \geq \mathrm{\trace}T -
\frac{C}{\gamma^{\,2}} \cdot
\big[ \mathrm{\trace}T-
\mathrm{\trace}T^{\,2} \big].
\end{align*}
We now apply Lemma \ref{lemma_traces}, with
$M+s$ in lieu of $M$.
Combining with \eqref{eq_plus} yields
\begin{align}\label{eq_rho_1}
\liminf_{n \lr \infty} N_n \geq
\begin{cases}
(1-\gamma) \cdot \Delta Q(p_*) \cdot L^{\,2} +O\big(\gamma^{\,-2} L\big)&\mbox{bulk case} \\
\tfrac{(1-\gamma)}{2} \cdot \Delta Q(p_*) \cdot L^{\,2} + O\big(\gamma^{\,-2} L\log L\big) &\mbox{boundary case}
\end{cases}
\end{align}
where the implied constants are independent of $\gamma$.

(To be precise, in order to obtain \eqref{eq_rho_1}, we first
assume that $\gamma \in \mathbb{Q}$ and select a subsequence $(n_k)$ such that $\lim_{k\to\infty}
N_{n_k}=\liminf_{n\to\infty} N_{n}$ and $\rho n_k \in \mathbb{N}$,
and then apply Lemma \ref{lemma_traces} to this subsequence.)

\noindent {\em Step 2. (Upper density bounds)}. This time we set
$$\rho=1+\gamma.$$
We consider again the concentration operator $T$ from \eqref{conop}.

For $j=1,\ldots,n$ consider the reproducing kernels $\bfK_{\zeta_j}\in\calW_{n\rho}$,
$$\bfK_{\zeta_j}(\zeta)=\bfK_{n\rho}(\zeta,\zeta_j).$$
Consider the subspace $V_1$ of $\calW_{n\rho}$ spanned by these elements
$$V_1=\lspan\left\{\,\bfK_{\zeta_1},\ldots,\bfK_{\zeta_n}\,\right\}$$
and the orthogonal complement
\begin{align*}
V &:= V_1 \ominus
\lspan \left\{\,\bfK_{\zeta_j}\, ; \zetaj \notin D^{\,-}\,\right\}.
\end{align*}
Notice that
$\dim V = N_n^{\,-}.$

Now pick an element $f \in V$. Since the family $\family$ is assumed to have the
interpolation property in Theorem \ref{th_samp},
there exists an element $f_1 \in \calW_{\rho n}$ such that
$f_1(\zetaj)=f(\zetaj)$, for all $j=1, \ldots, n$ and
\begin{align}\label{eq_norm_fj}
\|\,f_1\,\|^{\,2} \leq \frac{\newA}{n (\rho-1)^{\,2}}
\sum_{j=1}^{n} |f(\zetaj)|^{\,2}=\frac{\newA}{n (\rho-1)^{\,2}}
\sum_{\zeta_j\in D^{\,-}} |f(\zetaj)|^{\,2}.
\end{align}
where we used that $f(\zetaj)=\langle \,f\,,\, \bfK_{\zeta_j}\,\rangle=0$, if $\zetaj \notin D^{\,-}$.
Combining with the $2s$-separation and applying Corollary \ref{v2}, we obtain
\begin{align}\label{nytt}
\|\,f_1\,\|^{\,2} \leq \frac{C}{\gamma^{\,2}} \int_{D \cap S_{M+s}} |f|^{\,2}.
\end{align}

Letting $P_{V_1}:\calW_{n\rho}
\to V_1$ be the orthogonal projection, we note that
$$P_{V_1} f_1(\zeta_j)=\langle\, P_{V_1}f_1\,,\,\bfK_{\zeta_j}\,\rangle=\langle\, f_1\,,\,P_{V_1}\bfK_{\zeta_j}\,\rangle=f_1(\zeta_j)=f(\zeta_j),\qquad j=1,\ldots,n,$$
and $\|\,P_{V_1}f_1\,\|\le\|\,f_1\,\|$. Replacing $f_1$ by $P_{V_1}f_1$, we can thus assume besides \eqref{nytt} that $f_1\in V_1$. As the mapping
$$V_1\lr \C^n,\qquad g\longmapsto (g(\zeta_j))_{j=1}^n=(\langle\, g\,,\,\bfK_{\zeta_j}\,\rangle)_{j=1}^n$$
is a linear bijection, we conclude that $f=f_1$.

In conclusion, we obtain
\begin{align}\label{eq_landau2}
\|\,f\,\|^{\,2} \leq \frac{C}{\gamma^{\,2}} \int_{D \cap S_{M+s}} |f|^{\,2}, \qquad f \in V.
\end{align}
On the other hand, since $\dim V = N_n^{\,-}$, by the Courant-Fischer characterization of eigenvalues of self-adjoint operators,
\begin{align}\label{eq_landau1}
\lambda_{N_n^{\,-}} \geq \min_{ f \in V \setminus\{0\} } \frac{ \langle\, T f\,,\,f\, \rangle}{\|\,f\,\|^{\,2}} =
\min_{ f \in V \setminus\{0\} } \frac{1}{\|\,f\,\|^{\,2}}
\int_{S_{M+s} \cap D} |f|^{\,2} \geq \frac{\gamma^{\,2}}{C}.
\end{align}
Assuming again as we may that $C>2$, it follows that $\gamma^{\,2}/C < 1/2$ and Lemma \ref{lemma_eig} (with $\vt=\gamma^2/C$)
yields
\begin{align*}
N_n^{\,-} \leq \mathrm{\trace} T +
\frac{C}{\gamma^{\,2}} \cdot
\mathrm{\trace}(T-T^{\,2}).
\end{align*}
We now apply Lemma \ref{lemma_traces}, with
$M+s$ in lieu of $M$. Combined with \eqref{eq_plus} this yields
\begin{align}\label{eq_rho_2}
\limsup_{n \lr \infty} N_n \leq
\begin{cases}
(1+\gamma) \cdot \Delta Q(p_*) \cdot L^{\,2} +O\big(\gamma^{\,-2} L\big)&\mbox{bulk case} \\
\tfrac{(1+\gamma)}{2} \cdot \Delta Q(p_*) \cdot L^{\,2} + O\big(\gamma^{\,-2} L\log L\big) &\mbox{boundary case}
\end{cases}
\end{align}
(Again, the precise derivation of \eqref{eq_rho_2} is a follows: we first select a subsequence $(n_k)$ such that $\lim_{k\to\infty}
N_{n_k}=\limsup_{n\to\infty} N_{n}$, and then apply Lemma \ref{lemma_traces} to this subsequence.)

\noindent {\em Step 3. (Conclusions)}.
Combining \eqref{eq_rho_1} and \eqref{eq_rho_2} we obtain
\begin{align}\label{limsups}
\begin{cases}
\limsup_{n\lr\infty} \left|\,N_n -\Delta Q(p_*) \cdot L^{\,2}\,\right|
= O\big(\gamma  L^{\,2} + \gamma^{-2} L\big)&\mbox{bulk case} \\
\limsup_{n\lr\infty} \left|\, N_n-\tfrac{1}{2} \cdot \Delta Q(p_*) \cdot L^{\, 2}\, \right| = O\big(\gamma L^{\,2} + \gamma^{-2} L\log L\big) &\mbox{boundary case}
\end{cases}
\end{align}
where the implied constants are independent of the failure probability $\delta$. Letting $\delta \lr 0$, we infer that
\eqref{limsups} hold for almost every family, picked randomly with respect to the Boltzmann-Gibbs measure.

Finally, taking $\gamma=L^{-1/3}$ yields the desired discrepancy estimates in Proposition \ref{propop}, from which the claims on Beurling-Landau densities in Theorem \ref{mth2} also follow.
By this, all statements are proved. Q.E.D.

\section{Concluding remarks} \label{CORE}

\begin{figure}[h]
	\begin{subfigure}[h]
{0.32\textwidth}
		\begin{center}
			\includegraphics[width=1.69in,height=1.69in]{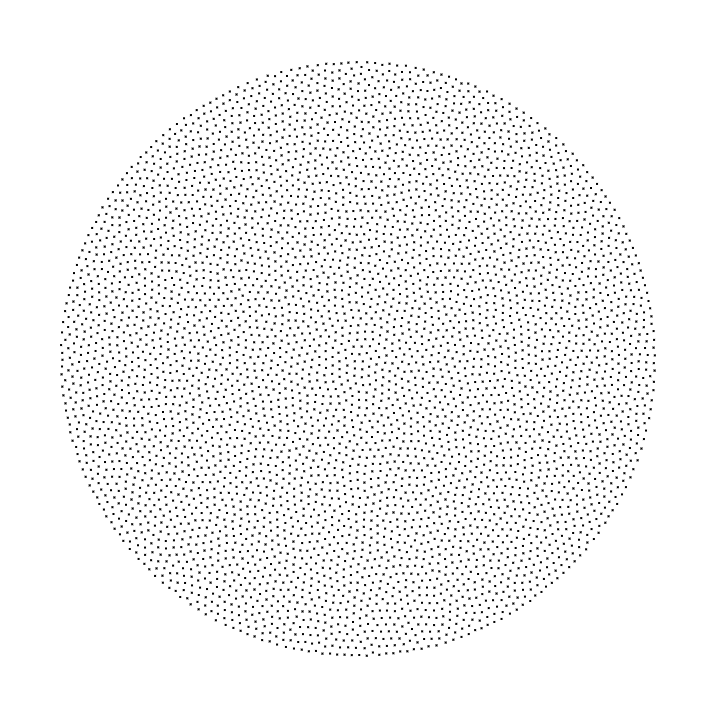}	
		\end{center}
		\caption{$Q=|\zeta|^2$}
	\end{subfigure}
	\begin{subfigure}[h]
{0.32\textwidth}
		\begin{center}
			\includegraphics[width=1.69in,height=1.69in]{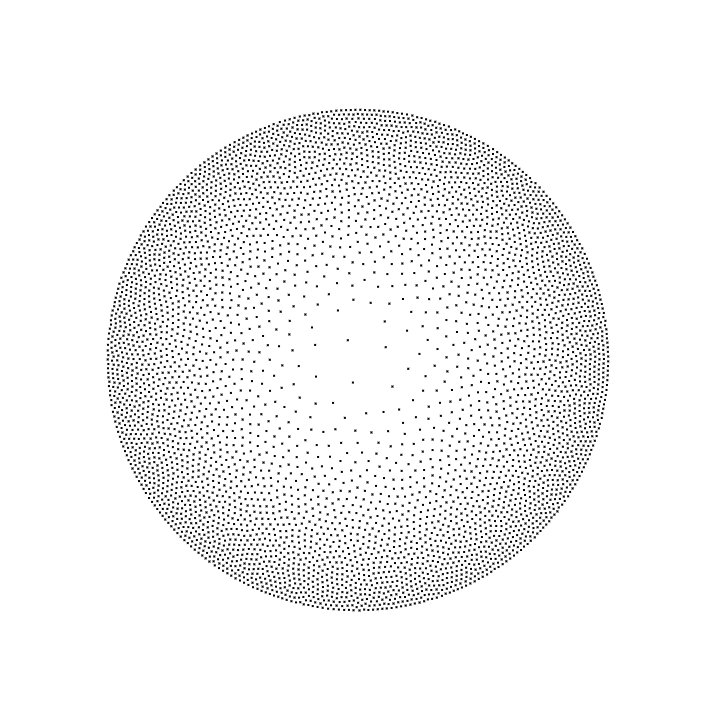}
		\end{center}
		\caption{$Q=|\zeta|^4$}
	\end{subfigure}	
\\
	\begin{subfigure}[h]
{0.32\textwidth}
		\begin{center}
			\includegraphics[width=1.69in,height=1.69in]{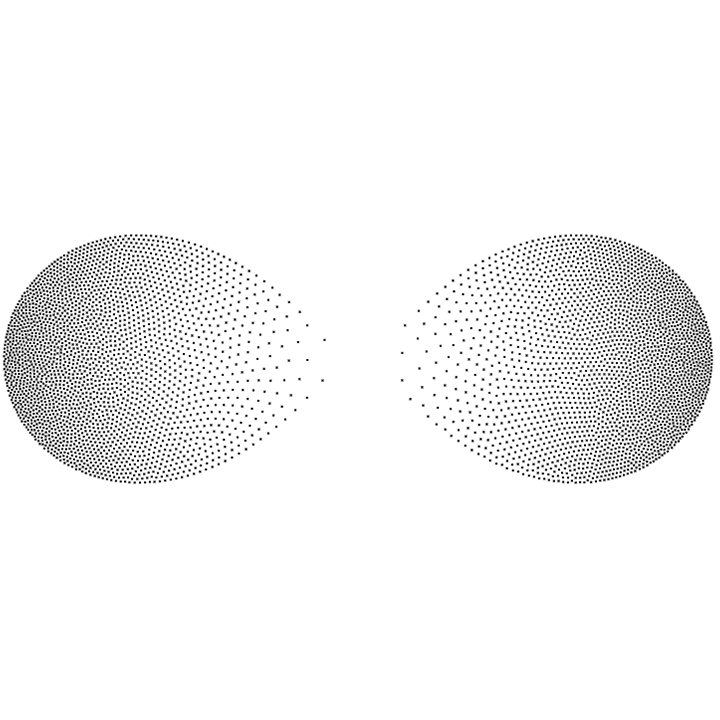}
		\end{center}
		\caption{$Q=|\zeta|^4-\frac 2 {\sqrt{2}}\re(\zeta^2)$}
	\end{subfigure}	
	\begin{subfigure}[h]{0.32\textwidth}
		\begin{center}
			\includegraphics[width=1.69in,height=1.69in]{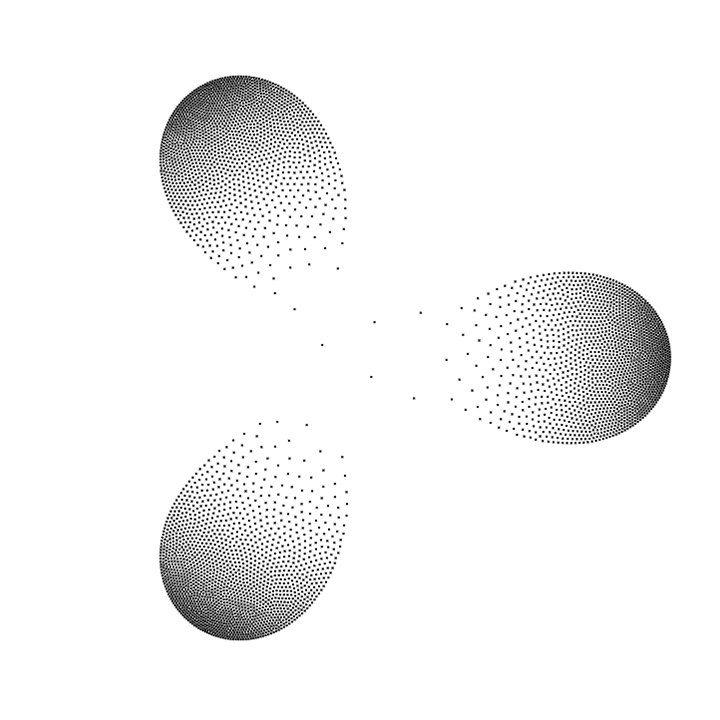}
		\end{center}
		\caption{$Q=|\zeta|^6-\frac 2 {\sqrt{5}}\re(\zeta^3)$}
	\end{subfigure}	
	\caption{Low-energy configurations with respect to various potentials $Q$.}
\label{fekete}
\end{figure}

Questions about freezing in Coulomb gas ensembles have been the subject of several investigations in the physics literature, cf.~e.g.~ the early works \cite{CLWH,DGIL} or the recent paper \cite{CSA} and the
extensive list of references there. Loosely speaking, one wants to understand as much as possible about the transition (as the inverse temperature $\beta\lr \infty$) between
an ``ordinary'' state of the Coulomb gas and a ``frozen'', presumably lattice-like state. As far as we are aware, the exact details of the transition remain largely unknown, and in particular
a basic question such as whether or not there exists a finite value $\beta_f<\infty$, such that the freezing takes place when $\beta$ increases beyond $\beta_f$, remains
an open question.

In \cite{CSA}, evidence is presented that a phase transition might occur at $\beta_f$ approximately equal to $70$. In this connection, we note that it is not
expected that ``perfect'' (or ``lattice-like'') freezing occurs at
this value $\beta_f$, but a rather different kind of phase transition, where the oscillations of the one-particle density near the boundary (the ``Hall effect'') start propagating inwards, from the boundary towards the bulk.
(We are grateful to Jean-Marie St\'{e}phan and to Paul Wiegmann for discussions concerning this point.)

The low temperature regime when $\beta_n$ increases at least logarithmically in $n$,
$\beta_n\ge c \log n$, was introduced in \cite{A2}. In this regime, we expect that a typical random configuration will look more and
more lattice-like as $c\lr\infty$, i.e., that we do have a perfect freezing in this transition.
 (Some examples of low-energy configurations, obtained numerically by an iterative method, are depicted in Figure \ref{fekete}.)

A glance at Figure \ref{fekete} gives the impression that different kinds of crystalline patterns seem to emerge. The most basic one is Abrikosov's triangular
lattice, which is believed to emerge close to points $p\in S$ at which the equilibrium density $\Delta Q(p)$ is \emph{strictly} positive. See Figure \ref{fig2}.
\begin{figure}[h]
\begin{center}
\includegraphics[width=.3\textwidth]{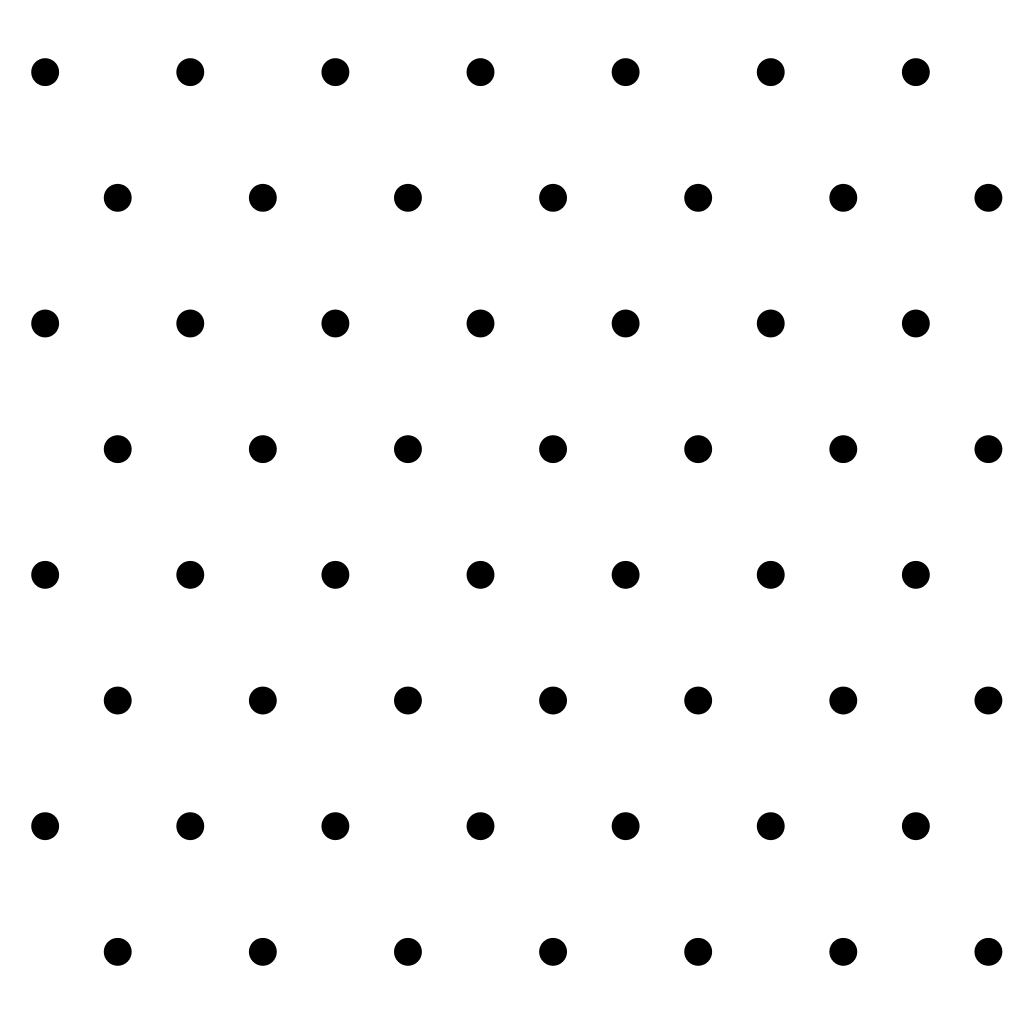}
\end{center}
\caption{Abrikosov's triangular lattice.}
\label{fig2}
\end{figure}

Likewise, other kinds of structures can be sensed from Figure \ref{fekete}, near \textit{singular} points $p\in S$ where the equilibrium density vanishes, i.e., $\Delta Q(p)=0$. Situation (B) depicts
a bulk singularity at $p=0$,
while (C),(D) have singularities on the boundary point $p=0$ (which in these cases are of ``lemniscate types'', see e.g.~\cite{BEG,GPSS} and references). In a rough sense (e.g.~\eqref{joh00})
the distribution is close to the equilibrium density also in the presence of singular points, but the exact details of the patterns which may emerge are not known to us.
However, for example the papers \cite{AKMW,AKS,AS,BEG,DS} deal with the corresponding $\beta=1$ ensembles.

As noted in \cite{A0}, the well-known ``Abrikosov conjecture'' as posed in \cite{AOC}, namely the problem of proving emergence of Abrikosov's lattice when rescaling Fekete configurations
about a ``regular'' point $p\in S$ where $\Delta Q(p)>0$, would follow if one could prove
a strong enough separation
of Fekete configurations $\family_n=\{\zeta_j\}_1^n$ as $n\lr\infty$.
(For example, in the Ginibre case $Q=|\zeta|^2$, proving $\liminf_{n\to\infty}\sep_n(\family_n)\ge 2^{1/2}3^{-1/4}$ would do.) It seems natural to add another layer to this problem and ask to what
extent Abrikosov's lattice emerges under the assumption $\beta_n\ge c\log n$, in the transition as $c\lr\infty$.

We finally offer a few brief remarks about some other works which are somewhat connected to the main theme in this note.

The counterpart to Theorem \ref{mth} (uniform separation) for Fekete configurations is well-known, and, apart from \cite{AOC}, is shown also in e.g.~\cite{LRY,RS} depending on an idea due to Lieb.

A somewhat weaker version of the equidistribution theorem (Theorem \ref{mth2}) for Fekete configurations  was shown in \cite{AOC,A0} using a variant of Landau's method which has been further
extended here. In particular those sources apply to all suitable families which obey certain sampling and interpolation conditions (a property that here is shown to hold almost surely for low temperature Coulomb ensembles).
The paper \cite{RS} suggests an utterly different approach, relying heavily on the minimum-energy property of Fekete configurations, and asserts that a discrepancy estimate
similar to \eqref{eq_d1} holds for bulk points with $\alpha=1$ in such a setting.

In the setting of $\beta$-ensembles, a recent result in \cite{ARSE} (part (2) of Theorem 1) provides
discrepancy estimates \eqref{eq_d1} with $\alpha$ close to 1. These are valid at any inverse temperature $\beta$, and provide failure probabilities for individual (deterministic) observation disks that are sufficiently away from the boundary of the droplet, and which may deteriorate as such limit is approached. With respect to separation, \cite[Theorem 1(4)]{ARSE} gives a local result in the bulk, which, when applied to the low temperature regime, asserts a similar order of separation as we obtain here. In contrast, our result applies globally to all points in the Coulomb gas, and without truncations that eliminate points close to the boundary. This is a nontrivial issue, since the
Hall effect postulates that the particle-distribution near the boundary is quite subtle when $\beta>1$. (In addition, \cite[Corollary 1.2]{ARSE} discusses
certain ``spatially averaged Coulomb-gases'' at low temperatures. As remarked in \cite[Paragraph below Corollary 1.2]{ARSE} these are different from the Coulomb gas ensembles, as considered here and
in Theorem 1 of \cite{ARSE}.)

The Coulomb gas on a sphere at a very low temperature ($\beta_n>n$) is studied in the paper \cite{BH}, where a certain Fekete-like behaviour is demonstrated. In this connection, it seems interesting to investigate the extent to which our present methods extend to Riemann-surfaces. We hope to return to this issue in a future work.


\begin{thebibliography}{999}

\bibitem{AGR} Abreu, L. D., Gr\"ochenig K., Romero, J. L., \textit{On accumulated spectrograms}. Trans. Amer. Math. Soc. 368 (2016), no. 5, 3629-3649.
\bibitem{ACNS} Ahn, A., Clark, W., Nitzan, S., Sullivan, J., \textit{Density of Gabor systems via the short time Fourier transform}. J. Fourier Anal. Appl. 24 (2018), no. 3, 699-718.
\bibitem{A0} Ameur, Y., \textit{A density theorem for weighted Fekete sets}, Int. Math. Res. Not. IMRN 2017, no. 16, 5010-5046.
\bibitem{A} Ameur, Y., \textit{A localization theorem for the planar Coulomb gas in an external field}. Electron. J. Probab. \textbf{26} (2021), article no. 46.
\bibitem{A2} Ameur, Y., \textit{Repulsion in low temperature beta-ensembles}. Commun. Math. Phys. \textbf{359} (2018), 1079-1089.
\bibitem{AHM} Ameur, Y., Hedenmalm, H., Makarov, N., \textit{Random normal matrices and Ward identities}, Ann. Probab. \textbf{43} (2015), 1157--1201.
\bibitem{AKM} Ameur, Y., Kang, N.-G., Makarov, N., \textit{Rescaling Ward identities in the random normal matrix model}, Constr. Approx. \textbf{50} (2019), 63-127.
\bibitem{AKMW} Ameur, Y., Kang, N.-G., Makarov, N., Wennman, A., \textit{Scaling limits of random normal matrix processes at singular boundary points}, J. Funct. Anal. \textbf{278} (2020), 108340.
\bibitem{AKS} Ameur, Y., Kang, N.-G., Seo, S.-M., \textit{The random normal matrix model: insertion of a point charge}, To appear in Potential Analysis. (Arxiv: 1804.08587.)
\bibitem{AOC} Ameur, Y., Ortega-Cerd\`{a}, J., \textit{Beurling-Landau densities of weighted Fekete sets and correlation kernel estimates.} J. Funct. Anal. \textbf{263} (2012), no. 7, 1825-1861.
\bibitem{AS} Ameur, Y., Seo, S.-M., \textit{On bulk singularities in the random normal matrix model.} Constr. Approx. \textbf{47} (2018), 3-37.
\bibitem{ARSE} Armstrong, S., Serfaty, S., \textit{Local laws and rigidity for Coulomb gases at any temperature}. Ann. Probab. 49 (2021), no. 1, 46–121.
\bibitem{BHa} Balogh, F., Harnad, J., \textit{Superharmonic perturbations of a Gaussian measure, equilibrium measures and orthogonal polynomials}, Compl. Anal. Oper. Theory \textbf{3} (2009), 333-360.
\bibitem{BH} Beltr\'{a}n, C., Hardy, A., \textit{Energy of the Coulomb gas on the sphere at low temperature}, Arch. Ration. Mech. Anal. \textbf{231} (2019), 2007-2017.
\bibitem{BOC} Berndtsson, B., Ortega-Cerd\`{a}, J., \textit{On interpolation and sampling in Hilbert spaces of analytic functions}, J. Reine Angew. Math. \textbf{464} (1995), 109-128.
\bibitem{BEG} Bertola, M., Elias Rebelo, J. G., Grava, T., \textit{Painlev\'{e} IV critical asymptotics for orthogonal polynomials in the complex plane}, SIGMA Symmetry Integrability Geom. Methods Appl. \textbf{14} (2018), 091.
\bibitem{B} Billingsley, P., \textit{Probability and measure}, Anniversary Edition, Wiley 2012.
\bibitem{BK} Bleher, P., Kuijlaars, A.B.J., \textit{Orthogonal polynomials in the normal matrix model with a cubic potential}, Adv. Math. \textbf{230} (2012),  1272-1321.
\bibitem{BHS} Borodachov, S. V., Hardin, D. P., Saff, E. B., \textit{Discrete energy on rectifiable sets}, Springer 2019.
\bibitem{CLWH} Caillol, J. M., Levesque, D., Weiss, J. J., Hansen, J. P., \textit{A Monte-Carlo study of the classical two-dimensional
one-component plasma.} J. Stat. Phys. \textbf{28} (1982), 325-349.
\bibitem{CSA} Cardoso, G., St\'{e}phan, J.-M., Abanov, A., \textit{The boundary density profile of a Coulomb droplet. Freezing at the edge}, J. Phys. A.: Math. Theor. \textbf{54}(1), (2021), 015002.
\bibitem{CMMO} Carroll, T., Marzo, J., Massaneda, X., Ortega-Cerd\`a, J., \textit{Equidistribution and $\beta$-ensembles.} Ann. Fac. Sci. Toulouse Math. (6) \textbf{27} (2018), 377-387.
\bibitem{CE} Charles, L., Estienne, B., \textit{Entanglement Entropy and Berezin-Toeplitz Operators}, Commun. Math. Phys. \textbf{376} (2019), 521-554.
\bibitem{DS} Dea\~{n}o, A., Simm, N., \textit{Characteristic polynomials of complex random matrices and Painlev\'{e} transcendents}, International Mathematics Research Notices IMRN, (2020),
doi 10.1093/imrn/rnaa111.
\bibitem{DGIL} Di Francesco, P., Gaudin, M., Itzykson, C., Lesage, F., \textit{Laughlin's wave functions, Coulomb gases and expansions of the discriminant}, Internat. J. Modern Phys. A 9
(1994), no. 24, 4257-4351.
\bibitem{DuS} Duren, P., Schuster, A., \textit{Bergman Spaces}, American Mathematical Society, Providence, 2004.
\bibitem{evga92} {E}vans, L.~C., {G}ariepy, R.~F.,
\newblock {\em {M}easure {T}heory and {F}ine {P}roperties of {F}unctions.}
\newblock {S}tudies in {A}dvanced {M}athematics. {C}{R}{C} {P}ress, {B}oca
{R}aton, 1992.
\bibitem{EF} Elbau, P., Felder, G., \textit{Density of eigenvalues of random normal matrices}, Commun. Math. Phys. \textbf{259} (2005), 433-450.
\bibitem{FL} Fenzl, M., Lambert, G., \textit{Precise deviations for disk counting statistics of invariant determinantal processes}. 
Int. Math. Res. Not. IMRN, 2021, rnaa341.
\bibitem{F} Forrester, P. J., \textit{Log-gases and random matrices}, Princeton 2010.
\bibitem{GM} Garnett, J. B., Marshall, D. E., \textit{Harmonic measure}, Cambridge 2005.
\bibitem{GP} Gustafsson, B., Putinar, M., \textit{An exponential transform and regularity of free boundaries in two dimensions}, Ann. Scuola Norm. Sup. Pisa Cl. Sci. (4) \textbf{26}(3) (1998), 507-543.
\bibitem{GPSS} Gustafsson, B., Putinar, M., Saff, E. B., Stylianopolous, N., \textit{Bergman polynomials on an archipelago: Estimates, zeros and shape reconstruction}, Adv. Math. 222 (2009), 1405-1460.
\bibitem{H} Halvdansson, S., \textit{Computations with the 2D Coulomb gas}, Bachelor's Thesis, Lund 2019:K7.
\bibitem{HM} Hedenmalm, H., Makarov, N., \textit{Coulomb gas ensembles and
Laplacian growth}, Proc. London. Math. Soc. \textbf{106} (2013), 859--907.
\bibitem{HW2} Hedenmalm, H., Wennman, A., \textit{Off-spectral analysis of Bergman kernels}, Comm. Math. Phys. \textbf{373} (2020), 1049-1083.
\bibitem{HW} Hedenmalm, H., Wennman, A., \textit{Planar orthogonal polynomials and boundary universality in the random normal matrix model}. Acta Math., to appear. Arxiv: 1710.06493. 
\bibitem{J} Johansson, K., \textit{On fluctuations of eigenvalues of
random Hermitian matrices}, Duke Math. J. \textbf{91} (1998),
151--204.
\bibitem{L} Landau, H. J., \textit{Necessary density conditions for sampling and interpolation of certain entire functions}, Acta Math. \textbf{117} (1967), 37-52.
\bibitem{LM} Lee, S.-Y., Makarov, N., \textit{Topology of quadrature domains}, J. Amer. Math. Soc. \textbf{29} (2016), 333-369.
\bibitem{LOC} Lev, N., Ortega-Cerd\`a, J., \textit{Equidistribution estimates for Fekete points on complex
manifolds.} J. Eur. Math. Soc. 18, no. 2 (2016): pp. 425-464.
\bibitem{LRY} Lieb, E. H., Rougerie, N., Yngvason, J., \textit{Local incompressibility estimates for the Laughlin phase.} Comm. Math. Phys. \textbf{365} (2019), no. 2, 431-470.
\bibitem{MMOC} Marco, N., Massaneda, X., Ortega-Cerd\`a, J., \textit{Interpolating and sampling sequences for entire functions.} Geom. Funct. Anal. \textbf{13} (2003), 862-914.
\bibitem{M} Mehta, M. L., \textit{Random Matrices}, Academic Press 2004.
\bibitem{NO} Nitzan, S., Olevskii, A., \textit{Revisiting Landau's density theorems for Paley-Wiener spaces}, C. R. Math. Acad. Sci. Paris 350 (2012), no. 9-10, 509-512.
\bibitem{RS95} Ramanathan, J., Steger, T.,
\textit{
Incompleteness of sparse coherent states},
Appl. Comput. Harmon. Anal. 2 (1995), no. 2, 148-153.
\bibitem{RS} Rota Nodari, S., Serfaty, S., \textit{Renormalized energy equidistribution and local charge balance in 2D Coulomb systems}, Int. Math. Res. Not. IMRN 2015, no. 11, 3035-3093.
\bibitem{ST} Saff, E. B., Totik, V., \textit{Logarithmic potentials with
external fields}, Springer 1997.
\bibitem{Sa1} Sakai, M., \textit{Regularity of a boundary having a Schwarz function}, Acta Math. \textbf{166} (1991), 263--297.
\bibitem{Se} Seip, K., \textit{Interpolation and sampling in spaces of analytic functions}, University Lecture Series \textbf{33}, AMS 2004.
\bibitem{Sk} Skinner, B., \textit{Logarithmic Potential Theory on Riemann Surfaces}, Thesis (Ph.D.)-California Institute of Technology. 2015. 117 pp.
\bibitem{S} Soshnikov, A., \textit{Determinantal random point fields}, Russ. Math. Surv. \textbf{55} (2000), 923-975.
\bibitem{SO} Spainer, J., Oldham, K. B., \textit{Dawsons integral}, in: An Atlas of Functions, Hemisphere, Washington, DC, 1987, 405-410, Ch. 42.
\bibitem{T} Tao, T., \textit{Topics in random matrix theory}, Graduate Studies in Mathematics \textbf{132}, AMS 2012.
\bibitem{Te} Teodorescu, R., \textit{Generic critical points of normal matrix ensembles}, Phys. A.: Math. Gen. \textbf{39} (2006), 8921-8932.
\bibitem{Z} Zabrodin, A., \textit{Random matrices and Laplacian growth}, In The Oxford handbook of random matrix theory, Oxford (2011), 802-823.
\end{thebibliography}
\end{document}